\newcommand{\R}{{\mathbb R}} %%reals
\newcommand{\Wo}{{\overset{o}{W}}}
\newcommand{\U}{{\mathbb U}}
\newcommand{\K}{{\mathbb K}}
\newcommand{\N}{{\mathbb N}}
\newcommand{\A}{\mathbb{A}}
\newcommand\norm[1]{\left\| #1\right\|}
\newcommand{\M}{{\mathcal M}}
\newcommand{\wei}[1]{\langle #1 \rangle}
\newcommand{\F}{{\mathbf F}}
\newcommand{\tA}{{\tilde{\A}}}
\newtheorem{theorem}{Theorem}[section]
\newtheorem{definition}[theorem]{Definition}
\newtheorem{remark}[theorem]{Remark}
\newtheorem{lemma}[theorem]{Lemma}
\newtheorem{proposition}[theorem]{Proposition}
\numberwithin{equation}{section}
\newcommand{\beq}{\begin{equation}}
\newcommand{\eeq}{\end{equation}}
\definecolor{darkred}{rgb}{.70,.12,.20}
\definecolor{darkgreen}{rgb}{.20,.52,.14}
\title[Weighted gradient estimates, degenerate Quasi-linear elliptic Equations] {Weighted Calder\'{o}n-Zygmund estimates for weak solutions of quasi-linear degenerate elliptic equations}
\author{Tuoc V. Phan}
\address{Department of Mathematics, University of Tennessee, Knoxville, 227 Ayres Hall, 1403 Circle Drive, Knoxville, TN 37996, U.S.A.}
\email{phan@math.utk.edu}
\begin{document}
\begin{abstract} This paper studies the Sobolev regularity estimates for weak solutions of a class of degenerate, and singular quasi-linear elliptic problems of the form $\textup{div}[\A(x,u, \nabla u)]= \textup{div}[\F]$ with non-homogeneous Dirichlet boundary conditions over bounded non-smooth domains. The coefficients $\A$ could be be singular,  and degenerate or both in $x$ in the  sense that they behave like some weight function $\mu$, which is in the $A_2$ class of Muckenhoupt weights. Global and interior weighted $W^{1,p}(\Omega, \omega)$-regularity estimates are established for weak solutions of these equations with some other weight function $\omega$. The results obtained are even new for the case $\mu =1$ because of the dependence on the solution $u$ of $\A$. In case of linear equations, our $W^{1,p}$-regularity estimates can be viewed as the Sobolev's counterpart of the  H\"{o}lder's regularity estimates  established by B. Fabes, C. E. Kenig, and R. P. Serapioni.
\end{abstract}

\maketitle
Keywords:  Degenerate quasi-linear elliptic equations, Muckenhoupt weights, Two weighted norm inequalities, Nonlinear weighted Calder\'{o}n-Zygmund estimates.
%\tableofcontents
\section{Introduction and main results} \label{Intro-sec}
%\subsection{Themes and discussions}
Let  $\Omega$ be a bounded domain in $\R^n$ with boundary $\partial \Omega$, and let $\mathbb{K}$ be an open interval in $\mathbb{R}$, or $\K =\R$. The theme of the paper is to establish the interior and global weighted $W^{1,p}$-regularity estimates for weak solutions of the non-homogeneous Dirichlet boundary value problem  
\begin{equation}  \label{main-eqn}
\left\{
\begin{array}{cccl}
\text{div} [\A(x,u, \nabla u)] & = & \text{div}[{\bf F}(x)]  &\quad \text{in} \quad \Omega,\\
  u & = & g(x) & \quad \text{on} \quad \partial\Omega,
 \end{array}
\right.
\end{equation}
where 
%\[
%\B u =  \gamma u + (1-\gamma) \frac{\partial u}{\partial \vec{\nu}}, \quad \text{with} \quad \gamma \in [0,1],
%\]
 ${\bf F}:\Omega \rightarrow \mathbb{R}^{n}$ is a given measurable vector field, $g: \Omega \rightarrow \mathbb{R}$ is a given measurable function. Moreover, the vector field function $\A: \Omega \times \mathbb{K} \times  \mathbb{R}^{n}~\rightarrow~\mathbb{R}^{n}$ is  a Carath\'{e}odory mapping satisfying the following natural coercivity, growth conditions
\begin{align} \label{Caratho-1}
& \A(x, \cdot, \cdot) \quad \text{is continuous  on} \ \mathbb{K} \times \mathbb{R}^n, \ \text{for almost every} \ x \in \Omega, \\
\label{Caratho-2}
& \A(\cdot, z, \eta) \quad \text{is measurable for each fixed} \ (z, \eta) \in \mathbb{K} \times  \mathbb{R}^n, \\
\label{up-ellip}
& |\A(x, z, \eta) | \leq \Lambda \mu(x) |\eta|, \quad \text{for almost every} \ x \in \Omega, \  \text{for all} \ (z, \eta) \in \mathbb{K} \times \mathbb{R}^n,\\ 
\label{lower-ellip}
& \Lambda^{-1}\mu(x) |\xi|^2 \leq \wei{\A(x, z, \xi), \xi},  \ \text{for almost every} \ x \in \Omega, \ \text{for all} \  (z, \xi) \in \mathbb{K} \times  \mathbb{R}^n,
\end{align}
where $\Lambda >0$ is a fixed constant, and $\mu :\mathbb{R}^n \rightarrow [0, \infty]$ is a weight function in the Muckenhoupt class $A_2$. 

We assume further that the vector field $\A$ is asymptotically Uhlenbeck in the following sense: There is a symmetric measurable matrix $\tilde{\A}: \Omega \rightarrow \mathbb{R}^{n \times n}$, and a bounded continuous function $\omega_0 : \overline{\mathbb{K}} \times [0, \infty) \rightarrow [0, \infty)$ such that
\begin{equation} \label{asymp-Uh}
|\A(x, z, \eta) - \tilde{\A}(x) \eta| \leq \omega_0 (z, |\eta|)\Big[1+ |\eta| \Big] \mu(x), \quad \text{for all most every} \quad x \in \Omega, \quad \forall \  (z,\eta) \in \mathbb{K} \times \mathbb{R}^n, 
\end{equation}
and 
\begin{equation} \label{omega-limit}
\lim_{s \rightarrow \infty} \omega_0(z, s) = 0, \ \text{uniformly in}\  z, \ \text{for} \  z \in \overline{\mathbb{K}} .
\end{equation}
Observe that from \eqref{up-ellip} and \eqref{lower-ellip}, we can assume also that 
\begin{equation} \label{ellip}
\Lambda^{-1} \mu(x) |\xi|^2 \leq \wei{\tilde{\A}(x) \xi, \xi} \leq \Lambda \mu(x) |\xi|^2, \quad \forall \ \xi \in \mathbb{R}^n, \quad \text{a.e.} \quad x \in \Omega.
\end{equation}
Our prototypical example is of the form
\[
\A(x,z,\eta) = a(x, z, |\eta|) \eta, \ \text{with} \  \lim_{s \rightarrow \infty} a(x, z, s) = \tilde{a}(x), \ \text{uniformly in} \ z \in \overline{\mathbb{K}}, \  
\text{and} \ \tilde{\A}(x) = \tilde{a}(x) \mathbb{I}_n.
\]
%=================

For a given weak solution $u \in W^{1,2}(\Omega, \mu)$ of \eqref{main-eqn}, the weighted Sobolev space with the weight $\mu$,  the main objective is to investigate the validity of the following  types of higher regularity weighted estimates %for weak solution $u~\in~W^{1,2}(\Omega, \mu)$ of \eqref{main-eqn}
\begin{equation} \label{main-est}
\int_{\Omega} |\nabla u|^p \omega(x) dx \leq C \left[\int_{\Omega}|\F/\mu|^p \omega(x) dx + \int_{\Omega} |\nabla g|^p \omega(x) dx + \omega(\Omega)\left\{\left(\frac{1}{\mu(\Omega)} \int_{\Omega}|\nabla u|^2\mu(x) dx \right)^{p/2} +1 \right\} \right]
\end{equation}
with $p > 2$, and some other weight function $\omega$ in some class of Muckenhoupt weights, whose definitions will be given later. %Moreover, the weights $\omega$ and $\mu$ form the pair of weights satisfying some conditions characterized by Sawyer in \cite{Sawyer}.

%To establish estimates of type as in \eqref{main-est}, 
The purpose of this paper is twofold. On one hand, it is the continuation of the developments of the recent work \cite{Bolegein, LTT, TN, NP} on the theory of Sobolev regularity theory for weak solutions of quasi-linear elliptic equations in which the coefficients $\A$ are dependent on the solution $u$. See, for instance \cite{AMP, BW-no, BW1, BW2, CP, CMP, CFL, Ragusa, Fazio-1, Kim-Krylov, KZ1, Krylov, Softova, MP-1, MP, M, Trud, Wang} for other work in the same directions but only for linear equations or for equations in which $\A$ is independent on $u$. On the other hand, this work includes the case that $\A$ could be singular or degenerate as a weight in some Munkenhoupt class of weights 
as considered in many papers such as \cite{Fabes, HKM, GW, MRW, MRW-1, Murthy-Stamp, NPS, Surnachev, Str} in which only Schauder's regularity of weak solutions are investigated. Moreover, even for the uniformly elliptic case, the results in this paper also improve those in \cite{Bolegein, LTT, TN, NP} since they do not require a-priori boundedness of weak solutions of \eqref{main-eqn}. This work also extends the recent work \cite{CMP} to the nonlinear case and two weighted estimates. Results of this paper can be considered as the Sobolev's regularity counterpart of the Schauder's one established in  \cite{Fabes, HKM, GW, MRW, MRW-1, Murthy-Stamp, NPS, Surnachev, Str} for singular, degenerate equations.
%============

We need several notations and definitions before stating our results. For each weight function $\mu$ and $p \geq 1$, the weighted Lebesgue space  $L^p(\Omega, \mu)$ is defined as
\[
L^p(\Omega, \mu) = \left\{f: \Omega \rightarrow \R:  \norm{f}_{L^p(\Omega, \mu)}^p = \int_{\Omega} |f(x)|^p \mu(x) dx < \infty \right\}.
\]
As in \cite{Fabes}, if $\mu \in A_2$ and an open bounded domain $\Omega \subset \R^n$, we can define the weighted Sobolev space $W^{1,2}(\Omega, \mu)$ consisting of all measurable functions $f :\Omega \rightarrow \R$ such that $f \in L^2(\Omega, \mu)$ and all of their weak derivatives $\partial_{x_k}f \in L^2(\Omega, \mu)$ with $k =1, 2, \cdots, n$.  Moreover,
\[
\norm{f}_{W^{1,2}(\Omega, \mu)} = \norm{f}_{L^2(\Omega, \mu)} + \sum_{k=1}^n \norm{\partial_{x_k} f}_{L^2(\Omega, \mu)}.
\]
We also denote $W^{1,2}_0(\Omega, \mu)$ the closure of the compactly supported function space $C_0^\infty(\Omega)$ in $W^{1,2}(\Omega, \mu)$. For convenience in stating the results, we need a notation of the class of vector fields satisfying \eqref{Caratho-1}-\eqref{ellip}.
\begin{definition} Given an open set $\Omega \subset \mathbb{R}^n$, an open interval $\mathbb{K} \subset \mathbb{R}$, and the numbers $\Lambda >0, M_0 \geq 1$, and $M_1 >0$.  Let $\omega_0 : \K \times [0, \infty) \rightarrow [0, \infty)$ be continuous such that \eqref{omega-limit} holds and $\norm{\omega_0}_{\infty} \leq M_1$.  We denote $\mathbb{U}_{\Omega, \mathbb{K}}(\Lambda, M_0, M_1, \omega_0)$ be the set consisting all vector fields $\A: \Omega \times \mathbb{K} \times \mathbb{R}^n \rightarrow \mathbb{R}^n $ such that \eqref{Caratho-1}-\eqref{ellip} hold for some weight function $\mu \in A_2$  with $[\mu]_{A_2} \leq M_0.$ Moreover, with a given $\A \in \mathbb{U}_{\Omega, \mathbb{K}}(\Lambda, M_0, M_1, \omega_0)$, the matrix $\tA$ defined in \eqref{asymp-Uh} is called asymptotical matrix of $\A$.
\end{definition} \noindent
We now recall what it means by weak solutions.
\begin{definition} \label{weak-sol-def}\begin{itemize} 
\item[\textup{(i)}] For some $R >0$, let $B_R(y)$ be any ball of radius $R$ centered at  $ y \in \R^n$. Assume $\A \in \U_{B_{R}(y), \K} (\Lambda, M_0, M_1, \omega_0)$ with its asymptotical matrix $\tA$ and weight $\mu$.  Let $\F : B_R(y) \rightarrow \R^n$ be a vector field  such that $\F/\mu \in L^2(B_R(y), \mu)$. A function $u \in W^{1,2}(B_{R}(y), \mu)$ 
is said to be a weak solution of 
\[
\textup{div}[\A(x, u, \nabla u] = \textup{div}[\F], \quad \text{in} \quad B_{R}(y),
\]
if $u(x) \in \K$ for a.e. $x \in B_R(y)$, and 
\[
\int_{B_R(y)} \wei{\A(x, u, \nabla u), \nabla \varphi} dx = \int_{B_R(y)} \wei{\F,  \nabla \varphi} dx, \quad \forall \ \varphi \in C_0^\infty(B_R(y)).
\]
\item[\textup{(ii)}] Assume $\A \in \U_{\Omega, \K} (\Lambda, M_0, M_1, \omega_0)$ with its asymptotical matrix $\tA$ and weight $\mu$. Let $g \in W^{1,2}(\Omega, \mu)$, $\F : \Omega \rightarrow \R^n$ be a vector field  such that $\F/\mu \in L^2(\Omega, \mu)$. A function $u \in W^{1,2}(\Omega, \mu)$ is said to be a weak solution of \eqref{main-eqn} if $ u - g \in W^{1,2}_0(\Omega,\mu)$, $u(x) \in \K$ for a.e. $x \in \Omega$, and
\[
\int_{\Omega}\wei{\A(x, u, \nabla u), \nabla \varphi} dx = \int_{\Omega} \wei{\F, \nabla \varphi} dx, \quad \forall \ \varphi \in C_0^\infty(\Omega).
\]
\end{itemize}
\end{definition} \noindent
For any integrable function $f$ on a measurable $E \subset \mathbb{R}^n$,  we also denote the average of $f$ on $E$ as
\[
\wei{f}_{E} = \frac{1}{|E|} \int_{E} f(y) dy, \quad \text{with} \quad |E| = \int_{E} dx.
\]
Moreover, with a measurable set $E \subset \mathbb{R}^n$ and a positive Borel measure $\mu$, we also denote
\[
\mu(E) = \int_{E} d\mu(x).
\]
At this moment, we refer the readers to the definitions of $A_p$ classes of Muchkenhoupt weights and pairs of weights satisfying the $q$-Sawyer's condition in Definition \ref{A-p-weights}, and Definition \ref{Sawyer-pair}. Our first result is the interior estimates for the gradients of weak solutions of \eqref{main-eqn}.
\begin{theorem} \label{inter-theorem} Let $\Lambda >0, M_0, q \geq 1, M_1 >0, M_2 \geq 1$ and $p > 2$. Then there is $\delta = \delta (q, \Lambda, M_0, M_1, M_2, p, n) \in (0,1/8)$ and sufficiently small such that  the following holds: Suppose that $\K \subset \R$ is an open interval and $\omega_0:\K\times [0, \infty)\rightarrow [0~,~\infty)$ is continuous satisfying \eqref{omega-limit} and $\norm{\omega_0}_{\infty} \leq M_1$.  For some $R>0$, let $\A \in \mathbb{U}_{B_{2R}, \mathbb{K}} (\Lambda, M_0, M_1, \omega_0)$, with its asymptotical matrix $\tA$, and weight $\mu \in A_2$. Suppose that $\tA$ satisfies the following smallness condition on bounded mean oscillation with respect to the weight $\mu$:
\begin{equation}\label{BMO-B-R}
\sup_{0< r < R_0}\sup_{x \in B_{R}} \frac{1}{\mu(B_r(x))} \int_{B_{r}(x)} \Big| \tA(y) -\wei{\tA}_{B_r(x)} \Big|^2 \mu^{-1} (y) dy \leq \delta, \quad \text{for some} \quad R_0 \in (0, R].
\end{equation}
Then, if $u \in W^{1,2}(B_{2R}, \mu)$ % with  $u(x) \in \overline{\mathbb{K}}$ for almost every $x \in B_{2R}$ 
is a weak solution of 
\[
\textup{div}[\A(x, u, \nabla u)] = \textup{div}[\F], \quad \text{in} \quad B_{2R},
\]
and if $\omega \in A_{q}$ with $[\omega]_{A_{q}} \leq M_2$, and the pair $(\mu, \omega)$ satisfies the $\frac{p}{2}$-Sawyer's condition, then there is a constant $C= C(p, q, \Lambda, M_0, M_1, M_2, \omega_0, n)>0$ such that the estimate
\begin{equation} \label{CZ-est}
\begin{split}
& \left(\frac{1}{\omega(B_R)}\int_{B_{R}} |\nabla u|^p \omega(x) dx \right)^{1/p} \\
& \leq C\left[ \left(
\frac{1}{\mu(B_{2R})}\int_{B_{2R}} |\nabla u|^2 \mu(x) dx \right)^{1/2} + \left(\frac{1}{\omega(B_{2R})}\int_{B_{2R}} \Big| \F/\mu\Big|^p \omega(x) dx \right)^{1/p}  +1\right],
\end{split}
\end{equation}
holds, assuming that $|\F/\mu| \in L^2(B_{2R}, \mu) \cap L^p(B_{2R}, \omega)$.
\end{theorem}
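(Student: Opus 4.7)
The plan is to follow the perturbation/level-set approach pioneered by Caffarelli--Peral and then adapted in the weighted/nonlinear setting by Byun--Wang, Mingione, and the papers cited in the introduction, but carried out in the $(\mu,\omega)$ two-weight framework where the natural energy space for $u$ is $W^{1,2}(\Omega,\mu)$. The core object of study is $G = |\nabla u|^2$, viewed as an element of $L^1_{\mathrm{loc}}(B_{2R},\mu)$, and the target is a good-$\lambda$ (level-set) decay for $G$ with respect to $\mu$; the final upgrade to $L^p(\omega)$ comes from the $\frac{p}{2}$-Sawyer assumption, which is precisely the criterion for the $\mu$-weighted maximal operator to be bounded from $L^{p/2}(\mu)$ into $L^{p/2}(\omega^{\,\text{dual}})$, i.e.\ the statement that $M_\mu : L^{p/2}(\mu)\to L^{p/2}(\omega)$ is what one needs to pass from level sets in $\mu$-measure to norms in $\omega$-measure.

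The first technical step will be a two-stage comparison. Fix a small sub-ball $B_r(x_0)\subset B_{2R}$ and let $v$ solve the frozen problem $\mathrm{div}[\tA(x)\nabla v]=0$ in $B_r(x_0)$ with $v=u$ on $\partial B_r(x_0)$. Testing the equation by $u-v$ and using \eqref{asymp-Uh}--\eqref{omega-limit} gives
\begin{equation*}
\frac{1}{\mu(B_r)}\int_{B_r(x_0)} |\nabla(u-v)|^2 \mu\, dx
\;\le\; C\Big[\, \varepsilon(N) + \frac{1}{N^{2}}\Big] \cdot \Big(1+\tfrac{1}{\mu(B_r)}\int_{B_r}|\nabla u|^2\mu\Big) + C \frac{1}{\mu(B_r)}\int_{B_r}|\F/\mu|^2\mu,
\end{equation*}
where $\varepsilon(N)\to 0$ as $N\to\infty$ by \eqref{omega-limit}; the ``$+1$'' arises from the $1+|\eta|$ in \eqref{asymp-Uh} and explains the analogous ``$+1$'' on the right of \eqref{CZ-est}. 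A second comparison replaces $\tA(x)$ by its $\mu$-average $\langle\tA\rangle_{B_r(x_0)}$; \eqref{BMO-B-R} keeps this error of order $\delta$. The remaining reference function $w$ solves a \emph{linear} degenerate equation with essentially constant matrix with weight $\mu$, for which a Lipschitz / reverse-Hölder estimate of the form $\|\nabla w\|_{L^\infty(B_{r/2})} \le C\bigl(\fint_{B_r}|\nabla w|^2 \mu\bigr)^{1/2}$ is available (of Fabes--Kenig--Serapioni type, as invoked throughout the paper).

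With these three comparisons in hand, one derives the standard good-$\lambda$ inclusion: for $N$ large and $\delta$ small,
\begin{equation*}
\{\, M_\mu(|\nabla u|^2) > N^2\lambda \,\}\cap B_R \;\subseteq\; \{\, M_\mu(|\F/\mu|^2) > \delta\lambda \,\} \;\cup\; \{\, M_\mu(|\nabla u|^2) > \lambda \,\} \;\cup\; E_\infty,
\end{equation*}
where $E_\infty$ absorbs a set of small $\mu$-measure coming from the ``$+1$'' term. A Vitali covering on stopping-time balls (run at the level $\lambda_0 = \kappa \fint_{B_{2R}}|\nabla u|^2\mu + \kappa$ so that the initial measure density is controlled) turns this into a quantitative decay of the level sets of $M_\mu(|\nabla u|^2)$ with respect to $\omega$, using only the $A_q$ bound on $\omega$ for the elementary doubling/density transfer between $\mu$ and $\omega$ on the Vitali family.

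The final step is to integrate $\lambda^{p/2}$ against the derived level-set bound, producing $\|M_\mu(|\nabla u|^2)\|_{L^{p/2}(\omega)}$ on the left and $\|M_\mu(|\F/\mu|^2)\|_{L^{p/2}(\omega)}$ (plus the initial energy and ``$+1$'' term) on the right. The $\frac{p}{2}$-Sawyer condition for $(\mu,\omega)$ is then invoked in exactly this form to remove the maximal function on the left and on the right, yielding \eqref{CZ-est}. I expect the main obstacle to be a clean execution of the first comparison in the degenerate $\mu$-weighted setting: the cutoff arguments and Caccioppoli inequalities must be written with $\mu$-weighted test functions, the splitting between $\{|\nabla u|\le N\}$ and $\{|\nabla u|>N\}$ must be handled so that the ``$+1$'' in \eqref{asymp-Uh} produces an additive rather than multiplicative error, and the Lipschitz estimate for the homogenized reference equation must be applied in a form robust under $A_2$ doubling so that the constants depend only on $[\mu]_{A_2}\le M_0$ and $\Lambda$, independent of the ball $B_r(x_0)$.
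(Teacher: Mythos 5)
Your overall plan — a two-stage freezing/approximation at small scales, a good-$\lambda$ density estimate, a Vitali covering run at a normalized starting level, and a final pass through the $\frac{p}{2}$-Sawyer condition for the $\mu$-weighted maximal operator — is exactly the paper's architecture (Proposition \ref{interio-approx-lemma}, Lemma \ref{density-est-interior-1}, Proposition \ref{good-lambda-interior}, Lemma \ref{iteration-interior-lemma}, followed by Lemma \ref{measuretheory-lp} and Theorem \ref{Two-weighted-maximal-ineq}). The comparison lemmas, the use of Lemma \ref{compare-omega-mu} to transfer density estimates from $\mu$ to $\omega$, and the Lebesgue differentiation step to remove the maximal function on the left are all as you describe.

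However, there is a real gap in your first comparison estimate as written, and it is precisely the place where the paper's main device enters. You claim
\begin{equation*}
\frac{1}{\mu(B_r)}\int_{B_r(x_0)} |\nabla(u-v)|^2 \mu\, dx
\;\le\; C\Big[\, \varepsilon(N) + \tfrac{1}{N^{2}}\Big] \cdot \Big(1+\tfrac{1}{\mu(B_r)}\int_{B_r}|\nabla u|^2\mu\Big) + C\, \tfrac{1}{\mu(B_r)}\int_{B_r}|\F/\mu|^2\mu,
\end{equation*}
arguing that the factor $\varepsilon(N)+1/N^2$ comes from splitting $\{|\nabla u|>N\}$ versus $\{|\nabla u|\le N\}$. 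But on $\{|\nabla u|\le N\}$ the bound from \eqref{asymp-Uh} and $\norm{\omega_0}_\infty\le M_1$ gives $\omega_0(u,|\nabla u|)(1+|\nabla u|)\le M_1(1+N)$, and after Young's inequality this contributes a term of size $M_1^2(1+N)^2$ to the right-hand side, \emph{not} $1/N^2$. The threshold argument alone produces an error that grows with $N$, so the estimate as stated does not follow; no choice of $N$ alone makes it small. The paper resolves this by the ``double-scaling parameter'' of Remark \ref{remark-1}: one works with $\A_\lambda(x,z,\eta)=\A(x,\lambda z,\lambda\eta)/\lambda$, for which the Uhlenbeck error becomes $\tfrac{1}{\lambda}\omega_0(\lambda z,\lambda|\eta|)(1+\lambda|\eta|)\mu$, and the threshold split (at $\lambda|\eta|=K_\delta$) then yields the error $\delta|\eta| + (M_1 K_\delta+\delta)/\lambda$, which \emph{is} small once $\delta$ is fixed and $\lambda$ is large; this is \eqref{u-w-step-1} in Lemma \ref{step-1-comparision}. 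Your ``run the stopping time at level $\lambda_0=\kappa\fint_{B_{2R}}|\nabla u|^2\mu+\kappa$'' is the same maneuver in disguise (it is the rescaling $u\mapsto u/M$ with $M=CK_0$ at the end of the paper's proof, and it is the source of the $+1$), but it has to be threaded \emph{through the comparison lemma} — i.e.\ the comparison must be stated and proved for $\A_\lambda$ with $\lambda\ge\lambda_0$, so that the scaling parameter is available when estimating the Uhlenbeck error — rather than invoked only at the Vitali stage. Without this, your $E_\infty$ exceptional set is not of controllably small measure. I would therefore revise the first approximation step to be stated for $\A_\lambda$ from the outset; after that, the rest of your outline goes through verbatim.
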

%==============
\noindent
We now have some discussion regarding \eqref{BMO-B-R}. We emphasize that in \eqref{BMO-B-R}, the average $\wei{\tA}_{B_r(x)}$ of $\tA$ in the ball $B_r(x)$ is taken with respect the Lebesgue measure.  The metric defined as in \eqref{BMO-B-R} is introduced in \cite{GC, GC-1, MW1, MW2} and called mean oscillation with respect to the weight $\mu$. A space consisting of all locally integrable functions satisfying the bounded mean oscillation with respect to some fixed given weight is also studied in \cite{GC, GC-1, MW1, MW2}. As noted in \cite{GC, GC-1, MW1, MW2}, this space is different from the usual weighted \textup{BMO} space, and is also different from the well-known John-Nirenberg \textup{BMO} space. We also note that the smallness condition \eqref{BMO-B-R} is natural as it reduces to the regular smallness condition in \textup{BMO} space that already used in known work \cite{AMP, BW-no, BW1, BW2, CP, CMP, CFL, Ragusa, Fazio-1, Kim-Krylov, KZ1, Krylov, Softova, MP-1, MP, M, Trud, Wang}. Moreover, as demonstrated by a counterexample in \cite{CMP}, the smallness condition \eqref{BMO-B-R} is necessary. In particular, as it is shown in \cite{CMP}, the estimate \eqref{CZ-est} is even not valid when the coefficients $\tA$ is uniformly continuous, but degenerate. In this light and compared to \cite{Fazio-2}, Theorem \ref{inter-theorem}, once reduced to the linear case, gives the right conditions on the coefficients so that the $W^{1,p}$-regularity estimates hold.

To derive the $L^p$-estimates for solutions up to the boundary, we need some regularity condition on $\partial \Omega$. The following definition is important in the paper.
\begin{definition} \label{Reifenberg-flatness} We say that $\Omega$ is a $(\delta, R_{0})$-Reifenberg flat domain if, for every $x\in \partial \Omega$ and every $r\in (0, R_{0})$, there exists a coordinate system $\{\vec{y}_{1}, \vec{y}_{2}, \cdots, \vec{y}_{n}\}$ which may depend on $x$ and $r$, such that in this coordinate system $x = 0$ and
\[
B_{r}(0) \cap \{y_{n} > \delta r\} \subset B_{r}(0) \cap \Omega \subset B_{r}(0) \cap \{y_{n} > -\delta r\}. 
\] 
\end{definition} \noindent
In the above and hereafter $B_{\rho}(x)$ denotes a ball of radius $\rho$ centered at $x$, $B_{\rho}^{+}(x)$ is the upper-half ball, and $\Omega_{\rho}(x) = B_{\rho}(x)\cap \Omega$, the portion of the ball in $\Omega$.  We also need the following additional definition in order to derive the local estimate of $\nabla u$ on the boundary.
\begin{definition} Let $\Omega \subset \R^n$ be a bounded domain. Let $R >0$ and $y_0 \in \overline{\Omega}$. The domain $\Omega_R(y_0) = \Omega\cap B_R(y_0)$ is said to be of type $(A,r)$,  if there is $A >0$ and $r \in (0, R/2)$ such that for every $y \in \Omega_R(y_0)$, and $\rho \in (0, r)$ if  $B_\rho(y) \cap\partial B_R(y_0)  \cap \overline{\Omega}\not= \emptyset$, and $B_\rho(y)\cap  \partial \Omega \cap \overline{B}_R(y_0) \not= \emptyset$ then 
\[
|\Omega_R(y_0) \cap B_\rho(y)| \geq A|B_\rho(y)|.
\]
\end{definition} \noindent
We remark that if $\overline{\Omega} \subset B_R(y_0)$, then $\Omega_R(y_0) =\Omega$ and it is a $(1,r)$ domain with $r \in (0, \text{dist}(\partial \Omega, B_R(y_0)))$. Moreover, 
if $\Omega = B_2^+(0)$, then $\Omega_1(0)$ is of type $(A, r)$ with some $A = A(n) >0$ and $ r\in (0,1)$. The following theorem is the main result of the paper.
%==============
\begin{theorem} \label{main-theorem}  Let $\Lambda >0, A>0, M_0, q \geq 1, M_1 >0, M_2 \geq 1$, and $p > 2$. Then there is a sufficiently small number $\delta \in (0,1/8)$ depending on  $p, q, \Lambda, A, M_0, M_1, M_2$, and $n$ such that  the following holds: Suppose that $\Omega$ is a $(\delta, R_0)$-Reifenberg flat domain for some $R_0 >0$, $\K \subset \R$ is an open interval, and $\omega_0:\K\times [0, \infty)\rightarrow [0~,~\infty)$ is continuous satisfying \eqref{omega-limit} and $\norm{\omega_0}_{\infty} \leq M_1$.   Suppose also that $\A \in \mathbb{U}_{\Omega, \mathbb{K}} (\Lambda, M_0, M_1, \omega_0)$, with its asymptotical matrix $\tA$, and weight $\mu \in A_2$ satisfying the following smallness condition on bounded mean oscillation with respect to 
the weight $\mu$:
\begin{equation} \label{smallness-BMO}
\sup_{0 < r < R_0} \sup_{x\in \Omega} \frac{1}{\mu(B_r(x))} \int_{B_{r}(x) \cap \Omega} \Big| \tA(y) -\wei{\tA}_{B_r(x) \cap \Omega} \Big|^2 \mu^{-1} (y) dy \leq \delta.
\end{equation}
Then, for every $R>0$, $y_0 \in \overline{\Omega}$ such that $\partial \Omega \cap B_R(y_0) \not=\emptyset$, and $\Omega_R(y_0)$ is of type $(A, r_0)$ with some small number $r_0>0$, and for $\omega \in A_{q}$ with $[\omega]_{A_q} \leq M_2$, and the pair $(\mu,\omega)$ satisfies the $\frac{p}{2}$-Sawyer's condition,  there is some positive constant $C= C(p, q, \Lambda, A, M_0, M_1, M_2, \Omega, R/r_0, \omega_0, n)$ such that if $u \in W^{1,2}(\Omega, \mu)$ is a weak solution of  \eqref{main-eqn}, the estimate
\[
\begin{split}
 \int_{\Omega_{R}(y_0)} |\nabla u|^p \omega(x) dx & \leq C \left[ \int_{\Omega_{2R}(y_0)} |\nabla g|^p \omega(x) dx +  \int_{\Omega_{2R}(y_0)} | \F/\mu|^p \omega(x) dx \right.\\
& \quad \quad + \left. \omega(\Omega_{R}(y_0)) \left\{\left(\frac{1}{\mu(B_{2R}(y_0))}\int_{\Omega_{2R}(y_0)} |\nabla u|^2\mu(x) dx \right)^{p/2}  +1 \right\}\right]
\end{split}
\]
holds if $|\nabla g| \in L^2(\Omega, \mu) \cap L^p(\Omega_{2R}(y_0), \omega)$, and $|\F/\mu| \in L^2(\Omega, \mu) \cap L^p(\Omega_{2R}(y_0), \omega)$.
\end{theorem}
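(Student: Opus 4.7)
The plan is to prove Theorem \ref{main-theorem} by combining the interior estimate of Theorem \ref{inter-theorem} with a boundary analog, the latter obtained through a perturbation/comparison argument on approximate half-balls furnished by the Reifenberg flatness. First I would reduce to the case $g \equiv 0$ by setting $v = u-g$, so that $v\in W_0^{1,2}(\Omega,\mu)$ satisfies $\textup{div}[\tilde{\mathcal A}(x,v,\nabla v)] = \textup{div}[\tilde{\F}]$, where $\tilde{\mathcal A}(x,z,\eta) := \A(x,z+g(x),\eta+\nabla g(x))$ lies in the same class $\U_{\Omega,\K'}(\Lambda,M_0,M_1,\omega_0)$ (up to a harmless enlargement of $\K$) and $\tilde{\F} := \F$; the extra $|\nabla g|$ contribution is absorbed into the right-hand side in the final estimate. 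Then, for each boundary point $x_0\in\partial\Omega\cap B_R(y_0)$ and each radius $r<R_0$, Reifenberg flatness allows me to place $\Omega_r(x_0)$ between an inner and outer half-ball in a local coordinate system and to work on $B_r^+$-type domains.

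The heart of the proof is the construction of a boundary comparison estimate. For a fixed small ball $B_\rho(x_0)$ near $\partial\Omega$, I would introduce the frozen reference problem
\[
\textup{div}\bigl[\wei{\tA}_{B_\rho(x_0)\cap\Omega}\,\nabla w\bigr]=0\quad\text{in }B_\rho^+(x_0),\qquad w=v\text{ on the spherical part},\quad w=0\text{ on the flat part}.
\]
Since $\wei{\tA}_{B_\rho(x_0)\cap\Omega}$ is a constant symmetric matrix satisfying \eqref{ellip} with the $A_2$ weight $\mu$, the Fabes--Kenig--Serapioni theory (together with a Reifenberg-flat boundary version, as used in \cite{Fabes, MP-1, MP, NP}) furnishes a uniform $L^\infty$ bound for $\nabla w$ on $B_{\rho/2}^+(x_0)$ in terms of $\frac{1}{\mu(B_\rho)}\int|\nabla v|^2\mu\,dx$. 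The comparison estimate
\[
\frac{1}{\mu(B_\rho)}\int_{B_\rho^+} |\nabla v-\nabla w|^2\mu\,dx \;\leq\; C\,\varepsilon\Bigl(1+\frac{1}{\mu(B_\rho)}\int_{B_\rho^+}|\nabla v|^2\mu\,dx\Bigr) + C\,\frac{1}{\mu(B_\rho)}\int_{B_\rho^+}|\F/\mu|^2\mu\,dx
\]
would be obtained by testing the difference equation against $v-w$ and exploiting (i) the BMO-smallness \eqref{smallness-BMO}, (ii) the asymptotic Uhlenbeck condition \eqref{asymp-Uh} together with \eqref{omega-limit} (which produces an arbitrarily small constant when $|\nabla v|$ is large, while the bounded region $\{|\nabla v|\le L\}$ contributes an $L^2(\mu)$-term with pre-factor $\sup_{z,s\le L}\omega_0(z,s)$ made small after truncation), and (iii) Reifenberg-flatness to compare half-ball integrals with $\Omega_\rho$-integrals.

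Having the boundary comparison plus the interior comparison from Theorem \ref{inter-theorem}, I would then run the now-standard level-set/good-$\lambda$ machinery with respect to the $\mu$-weighted Hardy--Littlewood maximal function $\mathcal M_\mu$. For $N\gg 1$ and $\varepsilon\ll 1$, I would establish an inclusion of the form
\[
\bigl\{\mathcal M_\mu(|\nabla v|^2\chi_{\Omega_{2R}})>N\lambda\bigr\}\cap\Omega_R \;\subset\; \bigl\{\mathcal M_\mu(|\F/\mu|^2\chi_{\Omega_{2R}})>\varepsilon\lambda\bigr\}\cup E_\lambda,
\]
where $E_\lambda$ has $\mu$-density less than $\varepsilon$ inside each Vitali cell. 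The $A_q$ property of $\omega$ yields the corresponding density estimate in $\omega$-measure, and raising to the power $p/2$ and summing in $\lambda$ reduces the $\omega$-weighted $L^p$-norm of $|\nabla v|^2$ to the $\omega$-weighted $L^p$-norm of $|\F/\mu|^2$ plus the $\mu$-weighted $L^2$-norm of $|\nabla v|^2$; here the $\frac{p}{2}$-Sawyer condition of the pair $(\mu,\omega)$ is exactly what is needed to convert the tail term $\omega(\{M_\mu(|\nabla v|^2)>\cdot\})$-weighted estimates back into the form displayed in the theorem, including the $\omega(\Omega_R(y_0))$-factor on the $\mu$-averaged energy.

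The main obstacle I anticipate is the boundary comparison step, more specifically obtaining a \emph{Lipschitz} (or at least $L^\infty$-gradient) estimate for the reference problem $w$ on a Reifenberg half-ball in the weighted setting: the weight $\mu$ is only $A_2$, the boundary is only $\delta$-flat, and $\wei{\tA}_{B_\rho\cap\Omega}$ is merely a constant elliptic matrix with respect to $\mu$, so the $L^\infty$ bound on $\nabla w$ must be proved by combining Moser/De Giorgi iteration adapted to $A_2$ weights with a Reifenberg flattening argument and an even reflection across the flat part. A secondary but nontrivial issue is ensuring the nonlinear dependence of $\A$ on $u$ (through $\omega_0(u,|\nabla u|)$) is absorbed without an a priori bound on $u$; here I would exploit \eqref{omega-limit} by splitting $\{|\nabla u|\le L\}\cup\{|\nabla u|>L\}$, choosing $L$ large so that $\omega_0(\,\cdot\,,s)<\varepsilon$ on the complement, and treating the bounded part perturbatively, which is precisely the new feature promised in the introduction.
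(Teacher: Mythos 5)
Your high-level scaffold (interior estimate plus boundary comparison, weighted maximal function and good-$\lambda$, then the two-weight Sawyer/$A_q$ machinery) matches the paper's structure. However, there are two genuine problems with the specific plan.

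The reduction to $g\equiv 0$ by setting $v=u-g$ and $\tilde{\mathcal A}(x,z,\eta):=\A(x,z+g(x),\eta+\nabla g(x))$ fails, because $\tilde{\mathcal A}$ does \emph{not} lie in $\U_{\Omega,\K'}(\Lambda,M_0,M_1,\omega_0)$. The growth bound \eqref{up-ellip} becomes $|\tilde{\mathcal A}(x,z,\eta)|\le\Lambda\mu(x)(|\eta|+|\nabla g(x)|)$, which is not $\le\Lambda\mu(x)|\eta|$; the coercivity \eqref{lower-ellip} gives information only about $\wei{\A,\eta+\nabla g}$, not $\wei{\tilde{\mathcal A},\eta}$; and the asymptotically Uhlenbeck condition \eqref{asymp-Uh} picks up an uncontrolled term $|\tilde{\A}(x)\nabla g(x)|$ that is not majorized by $\omega_0(z,|\eta|)(1+|\eta|)\mu(x)$ unless $\nabla g$ is bounded, which is not assumed. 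The paper instead keeps $g$ in play: after the first comparison step (replacing $\A_\lambda(x,u,\nabla u)$ by $\tilde\A(x)\nabla w$ on $\Omega_\theta$), it subtracts $g$ at the \emph{linear} level and moves $\textup{div}[\tilde\A\nabla g]$ into a new right-hand side $\F'$; see Lemma \ref{L2-boundary-gradient-aprox}. This works because the subtraction is done on a linear equation, where the structure conditions are trivially preserved.

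Second, the argument is silent on why normalizing by the local energy is legitimate. The equation \eqref{main-eqn} is not invariant under $u\mapsto u/\lambda$, so the standard step of rescaling a weak solution so that its averaged energy is at most $1$ changes the operator. The paper handles this by working throughout with the family $\A_\lambda(x,z,\eta)=\A(x,\lambda z,\lambda\eta)/\lambda$ of Remark \ref{remark-1}, verifying via \eqref{lambda-asymp-Uh} that the approximation error in the first comparison step shrinks like $\lambda^{-1}(M_1 K_\delta+\delta)$, and in the end choosing $M\sim K_0$ in the proof of Theorem \ref{main-theorem} so that the rescaled solution $u_M=u/M$ satisfies the density hypothesis \eqref{M-density} needed to run Lemma \ref{global-iterating-lemma}. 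Simply splitting $\{|\nabla u|\le L\}\cup\{|\nabla u|>L\}$, as you suggest, does not by itself make the first comparison Lemma \ref{step-1-comparision-bd}-type estimate uniform, because the resulting constant still carries the untamed factor $\|\omega_0\|_\infty$ multiplied by the measure of the small-gradient set; the rescaling by $\lambda$ is what converts this into a term of order $\lambda^{-1}$ that can be made arbitrarily small. You identify the boundary Lipschitz estimate as an obstacle, but the paper bypasses the need for a Lipschitz estimate on a Reifenberg half-ball by using a \emph{two-step} boundary comparison (first $\tilde\A(x)$ on $\Omega_\theta$, then a constant matrix on the genuine flat half-ball $B_{3r/4}^+$, invoking \cite[Proposition 5.5]{CMP}), whereas your single-step frozen-coefficient comparison on $B_\rho^+$ tries to absorb the BMO-smallness and the Reifenberg geometry simultaneously and would require that estimate directly.
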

%=========== 

A few comments on Theorem \ref{inter-theorem} and Theorem \ref{main-theorem} are now in ordered. Firstly, as we already discussed, the smallness condition \eqref{smallness-BMO} is optimal, as there are counterexamples given in \cite{CMP} for the degenerate case, and \cite{M} for the uniformly elliptic case. Secondly, we emphasize that the novelty in Theorem \ref{inter-theorem} and Theorem \ref{main-theorem} is that the coefficients $\A$ are non-uniformly elliptic and they could depend on $u$.  Moreover, unlike the previous work \cite{Bolegein, LTT, TN, NP}, Theorem \ref{inter-theorem} and Theorem \ref{main-theorem} do not assume the boundedness nor continuity of the weak solutions $u$. Therefore, even in case that $\A$ is uniformly elliptic, Theorem \ref{inter-theorem}, and Theorem \ref{main-theorem} are already new. Lastly, we would like to point out that the type of local $W^{1,p}$- estimates near the boundary as Theorem \ref{main-theorem} are necessary in many applications in which only local information of the data near the considered point on the boundary is known. Also, observe that Reifenberg flat domains are already used in previous work \cite{AMP, BW-no, BW1, BW2, MP-1, MP}. However, the local estimates near the boundary as stated in Theorem \ref{main-theorem} do not seem to appear, nor to be  produced directly from the global $W^{1,p}$-estimate results in these mentioned papers, even for smooth domains. Theorem \ref{main-theorem} bridges both versions of local and global weighted $W^{1,p}$-estimates for both smooth and non-smooth domains. To establish the local weighted $W^{1,p}$-estimates near the boundary $y_0 \in \partial \Omega$ for non-smooth Reifenberg flat domains, we impose the $(A,r_0)$-type condition on $\Omega_R(y_0)$. Several technical analysis issues are revised and improved to prove Theorem \ref{main-theorem}. % on local estimate near the boundary point is therefore required.

%--------------------
There are two major difficulties in proving Theorems \ref{inter-theorem}-\ref{main-theorem}. The first one is due to the scaling properties the equations.
Observe that the class of equations \eqref{main-eqn} is not invariant under the usual scalings $u \rightarrow u/\lambda$ and the dilations $u (x) \rightarrow u(rx)/r$ with positive numbers $\lambda, r$. This is very serious since the theory of Sobolev regularity estimates for weak solutions relies heavily on these scalings and dilations, see for instance \cite{AMP, BW-no, BW1, BW2, CFL, Ragusa, Fazio-1, Kim-Krylov, KZ1, Krylov, Softova, MP-1, MP, M, Trud, Wang}. We overcome this by adapting  the perturbation technique with double-scaling parameter method introduced in \cite{LTT}, see also \cite{TN, NP} for the implementation of the method. In this perspective, the following observation regarding the scaling property of \eqref{main-eqn} is essential in the paper. 
\begin{remark} \label{remark-1} Let  $\A \in \U_{\Omega, \K}(\Lambda, M_0, M_1, \omega_0)$ with its corresponding asymptotical matrix $\tA$,  weight $\mu \in A_2$. Let $\lambda >0$ and define $\K_\lambda  = \K/\lambda$ and
\[ \A_{\lambda}(x,z,\eta) = \A(x, \lambda z, \lambda \eta)/\lambda, \quad (x,z, \eta) \in \Omega \times \K_\lambda \times \mathbb{R}^n.\]
Then, it is simple to check that 
\begin{equation} \label{lambda-asymp-Uh}
|\A_\lambda(x,z,\eta) - \tA(x) \eta| \leq \frac{1}{\lambda}\omega_0(\lambda z, \lambda \eta) (1 + |\lambda \eta|) \mu(x), \quad 
\forall \ (x,z,\eta) \in \Omega \times \K_\lambda \times \R^n.
\end{equation}
\end{remark} \noindent
The other major difficulty is due to the fact that we are working with two different weights, i.e. $\mu$ and $\omega$.  On one hand, all natural intermediate step estimates such as energy estimates, weighted maximal function operators $\M_\mu(|\nabla u|^2)$, and approximation estimates are performed in weighted spaces with weight $\mu$. On the other hand,  to obtain the estimates of $|\nabla u|$ in $L^p(\Omega, \omega)$, it requires to control the level sets, and density estimates of level sets of $\M_\mu(|\nabla u|^2)$ in measure $\omega$. Therefore, it requires to interchange the two measures. Lemma \ref{compare-omega-mu} below serves as the key ingredient for some kind of weak type $(1,1)$ estimates of two different weights. The lemma is indeed a simple consequence of the Coifman-Feferrman's result \cite{Coif-Feffer} on the reverse H\"{o}lder's inequality, and the doubling properties of the Muckenhoupt weights. 
Theorem \ref{inter-theorem}, and Theorem \ref{main-theorem} therefore can be viewed as a two weighted nonlinear Calder\'{o}n-Zygmund type estimates.

We conclude the section by highlighting the layout of the paper.  Some analysis preliminary tools in measure theories and weighted norm inequalities are reviewed in the next section, Section \ref{preliminaries}.  Section \ref{interior-approx-section} consists interior intermediate step estimates and the proof of Theorem \ref{inter-theorem}. Section \ref{global-regularity-section} treats the estimates near the boundary points and then proves Theorem \ref{main-theorem}.
%===================================
\section{Preliminaries on weights and weighted norm inequalities} \label{preliminaries}
This section recalls several real analysis results, definitions needed in the paper. We first recall the definition of $A_p$- Muckenhoupt class of weights introduced in \cite{Muckenhoupt}.
\begin{definition} \label{A-p-weights} Let $1 \leq p < \infty$, a non-negative, locally integrable function $\omega :\R^n \rightarrow [0, \infty)$ is said to be in the class $A_p$ of Muckenhoupt weights if 
\[
\begin{split}
[\omega]_{A_p} & :=  \sup_{\textup{balls} \ B \subset \R^n} \left(\fint_{B} \omega (x) dx \right) \left(\fint_{B} \omega (x)^{\frac{1}{1-p}} dx \right)^{p-1} < \infty, \quad \textup{if} \quad p > 1, \\
[\omega]_{A_1} &: =  \sup_{\textup{balls} \ B \subset \R^n} \left(\fint_{B} \omega (x) dx \right)  \norm{\omega^{-1}}_{L^\infty(B)}  < \infty \quad \textup{if} \quad p  =1.
\end{split}
\]
\end{definition} \noindent
%=======
It turns out that the class of $A_p$-Muckenhout weights satisfies the reverse H\"{o}lder's inequality and the doubling properties. In particular, a measure of any $A_p$-weight is comparable with the Lebesgue measure. This is in fact a well-known result due to R. Coifman and C. Fefferman \cite{Coif-Feffer}, and it  is an important ingredient in the paper. 
\begin{lemma}[\cite{Coif-Feffer}] \label{doubling} For $1 < p < \infty$, the following statements hold true
\begin{itemize}
\item[\textup{(i)}] If $\mu \in A_{p}$,  then for every ball $B \subset \R^n$ and every measurable set $E\subset B$, 
\begin{equation*}
\mu(B) \leq [\mu]_{A_{p}} \left(\frac{|B|}{|E|}\right)^{p} \mu(E).
\end{equation*}
\item[\textup{(ii)}] If $\mu \in A_p$ with $[\mu]_{A_p} \leq M$ for some given $M \geq 1$, then there is $C = C(M, n)$ and $\beta = \beta(M, n) >0$ such that
\[
\mu(E) \leq C \left(\frac{|E|}{|B|} \right)^\beta \mu(B),
\]
 for every ball $B \subset \R^n$ and every measurable set $E\subset B$.
\end{itemize}
\end{lemma}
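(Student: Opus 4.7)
The plan is to prove part (i) directly from the $A_p$ definition via Hölder's inequality, and then derive part (ii) from the reverse Hölder inequality for $A_p$ weights.

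For part (i), first I would write $|E| = \int_E \mu^{1/p}\cdot \mu^{-1/p}\,dx$ and apply Hölder's inequality with exponents $p$ and $p/(p-1)$ to obtain
\[
|E|^p \leq \mu(E) \left( \int_E \mu^{-\frac{1}{p-1}}\,dx \right)^{p-1}
\leq \mu(E) \left( \int_B \mu^{-\frac{1}{p-1}}\,dx \right)^{p-1}.
\]
On the other hand, the definition of $[\mu]_{A_p}$ directly gives
\[
\left( \int_B \mu^{-\frac{1}{p-1}}\,dx \right)^{p-1} \leq \frac{[\mu]_{A_p}\,|B|^p}{\mu(B)}.
\]
Combining these two inequalities and rearranging yields the claimed bound $\mu(B) \leq [\mu]_{A_p}\, (|B|/|E|)^p\,\mu(E)$. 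The case $p=1$ (not stated, but analogous) follows more simply from the definition via $\mu(B)/|B| \leq [\mu]_{A_1}\, \mathrm{ess\,inf}_B \mu \leq [\mu]_{A_1}\, \mu(E)/|E|$.

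For part (ii), the main ingredient is the classical \emph{reverse Hölder inequality} for Muckenhoupt weights: if $\mu \in A_p$ with $[\mu]_{A_p} \leq M$, then there exist $q = q(M,n) > 1$ and $C_1 = C_1(M,n) > 0$ such that for every ball $B$,
\[
\left( \fint_B \mu^{q}\,dx \right)^{1/q} \leq C_1 \fint_B \mu\,dx.
\]
Granting this, I would apply Hölder's inequality with exponents $q$ and $q/(q-1)$ to the integral defining $\mu(E)$:
\[
\mu(E) = \int_E \mu\,dx \leq \left( \int_E \mu^q\,dx \right)^{1/q} |E|^{(q-1)/q}
\leq \left( \int_B \mu^q\,dx \right)^{1/q} |E|^{(q-1)/q}.
\]
Using the reverse Hölder bound in the form $\int_B \mu^q\,dx \leq C_1^q\,\mu(B)^q\,|B|^{1-q}$ and substituting, one obtains
\[
\mu(E) \leq C_1\, \mu(B)\, \left( \frac{|E|}{|B|}\right)^{(q-1)/q},
\]
which is the claim with $\beta = (q-1)/q$ and $C = C_1$.

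The main obstacle is of course establishing the reverse Hölder inequality itself, with constants depending only on $[\mu]_{A_p}$ and $n$. This is the nontrivial content of the Coifman–Fefferman theorem, and would normally be proved through the Calderón–Zygmund decomposition applied to the level sets of $\mu$, yielding a self-improvement property for the $A_p$ condition (often stated as: $\mu \in A_p$ implies $\mu \in A_{p-\varepsilon}$ for some $\varepsilon > 0$, or equivalently, reverse Hölder with a small exponent $q-1 > 0$). In practice, for the purposes of the present paper I would simply invoke this classical result, as indicated by the citation \cite{Coif-Feffer}, and use the two-step argument above to deduce the stated doubling and strong doubling estimates.
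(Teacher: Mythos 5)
Your proposal is correct, and since the paper merely cites \cite{Coif-Feffer} for this lemma without giving a proof, your reconstruction is the natural one. Part (i) is exactly the standard consequence of H\"{o}lder's inequality applied to $|E|=\int_E \mu^{1/p}\mu^{-1/p}\,dx$ together with the $A_p$ condition on $B$, and the algebra checks out. Part (ii) is the standard deduction from the Coifman--Fefferman reverse H\"{o}lder inequality (applied to $\mu$ itself), again with the exponents correctly tracked, yielding $\beta=(q-1)/q$. You correctly identify that the only genuinely nontrivial ingredient is the reverse H\"{o}lder inequality with constants depending solely on $[\mu]_{A_p}$ and $n$, which is precisely the content of the cited Coifman--Fefferman theorem and is appropriate to invoke rather than reprove.
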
 \noindent
From Lemma \ref{doubling}, we can infer that any two Muckenhoupt weights are comparable. The following lemma is a consequence of Lemma \ref{doubling} and it is used frequently in this paper.
\begin{lemma} \label{compare-omega-mu} Let $1 < q, s < \infty$ and $M \geq 1, M' \geq 1$
Assume that $\mu \in A_s$ and $\omega \in A_q$ such that
\[
[\mu]_{A_s} \leq M, \quad [\omega]_{A_q} \leq M'.
\]
Then, there exists $\beta = \beta(M', n)>0$ such that for every ball $B$ and every measurable set $E \subset B$,
\[
\omega(E) \leq C(M, M', s, n) \left(\frac{\mu(E)}{\mu(B)} \right)^{\beta/s} \omega(B).
\]
\end{lemma}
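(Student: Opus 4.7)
The plan is to combine the two parts of Lemma \ref{doubling} to pass from one weighted measure to the other through the intermediate Lebesgue measure. Since $\mu \in A_s$ with $[\mu]_{A_s} \leq M$, part (i) of Lemma \ref{doubling} applied to $E \subset B$ yields
\[
\mu(B) \leq M \left(\frac{|B|}{|E|}\right)^{s} \mu(E),
\]
which I would rearrange to obtain the Lebesgue-measure ratio estimate
\[
\frac{|E|}{|B|} \leq M^{1/s} \left(\frac{\mu(E)}{\mu(B)}\right)^{1/s}.
\]
This is the key step: it converts information about the relative $\mu$-size of $E$ in $B$ into information about the relative Lebesgue size.

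Next, since $\omega \in A_q$ with $[\omega]_{A_q} \leq M'$, part (ii) of Lemma \ref{doubling} provides some exponent $\beta = \beta(M', n) > 0$ and a constant $C = C(M', n)$ such that
\[
\omega(E) \leq C \left(\frac{|E|}{|B|}\right)^{\beta} \omega(B).
\]
Substituting the ratio bound from the first step into this inequality gives
\[
\omega(E) \leq C \, M^{\beta/s} \left(\frac{\mu(E)}{\mu(B)}\right)^{\beta/s} \omega(B),
\]
which is exactly the claimed inequality with the constant absorbed as $C(M, M', s, n) = C(M', n)\, M^{\beta/s}$ and with the exponent $\beta/s$ depending only on $M'$, $n$, and $s$ as stated.

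There is no real obstacle here; the lemma is essentially a one-line corollary of Lemma \ref{doubling}. The only slightly subtle point is to be sure the exponent on the right is $\beta/s$ rather than $\beta$: the factor $1/s$ arises from the $s$-th power in the reverse inequality from $A_s$, while the $\beta$ itself comes from the doubling/reverse-Hölder exponent for $\omega$ and therefore depends only on $M'$ and $n$. Consequently $\beta$ in the statement can be taken to be $\beta(M', n)$ from Lemma \ref{doubling}(ii), independent of $\mu$ and $s$.
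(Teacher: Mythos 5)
Your proof is correct and follows essentially the same route as the paper: apply Lemma \ref{doubling}(i) to control the Lebesgue ratio $|E|/|B|$ by $(\mu(E)/\mu(B))^{1/s}$, then feed this into Lemma \ref{doubling}(ii) for $\omega$. The only difference from the paper's write-up is the order in which the two parts of Lemma \ref{doubling} are invoked, which is immaterial.
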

\begin{proof}  By Lemma \ref{doubling} -(ii), there is $\beta = \beta(M', n) >0$ such that
\[
\frac{\omega(E)}{\omega(B)} \leq C(M', n) \left( \frac{|E|}{|B|} \right)^{\beta}.
\]
On the other hand, by Lemma \ref{doubling}-(i), we see that
\[
\left(\frac{|E|}{|B|} \right)^{s} \leq M \frac{\mu(E)}{\mu(B)}.
\]
Combining the two estimates, we obtain
\[
\frac{\omega(E)}{\omega(B)} \leq C(M, M', s, n) \left(\frac{\mu(E)}{\mu(B)} \right)^{\beta/s},
\]
as desired.
\end{proof} \noindent
Next, we state a standard result in measure theory.
\begin{lemma} \label{measuretheory-lp}
Assume that $g\geq 0$ is a measurable function in a bounded subset $U\subset \mathbb{R}^{n}$. Let $\theta>0$ and $N>1$ be given constants. If $\mu$ is a weight function in  $\mathbb{R}^{n}$, then for any $1\leq p < \infty$ 
\[
g\in L^{p}(U,\mu) \Leftrightarrow S:= \sum_{j\geq 1} N^{pj}\mu(\{x\in U: g(x)>\theta N^{j}\}) < \infty. 
\]
Moreover, there exists a constant $C>0$ depending only on $\theta, N$ and $p$.  such that 
\[
C^{-1} S \leq \|g\|^{p}_{L^{p}(U,\mu)} \leq C (\mu(U) + S). 
\]
\end{lemma}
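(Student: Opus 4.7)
The plan is to apply the classical layer-cake formula
\[
\norm{g}_{L^p(U,\mu)}^p = p\int_0^\infty t^{p-1}\mu(E_t)\,dt, \qquad E_t := \{x \in U : g(x) > t\},
\]
and then to sandwich the right-hand side by two constant multiples of $S$ (up to an additive $\mu(U)$) by decomposing the $t$-range along the geometric grid $\{\theta N^j\}_{j\geq 0}$. The key observation is that on each interval $[\theta N^j,\theta N^{j+1}]$, both $t^{p-1}$ and the monotone function $t \mapsto \mu(E_t)$ are comparable, up to factors that depend only on $N$, to their values at the endpoints.

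For the upper bound, on $[\theta N^j,\theta N^{j+1}]$ with $j\geq 0$ I would use $t^{p-1}\leq (\theta N^{j+1})^{p-1}$ and the monotonicity $\mu(E_t)\leq \mu(E_{\theta N^j})$, yielding
\[
\int_{\theta N^j}^{\theta N^{j+1}} t^{p-1}\mu(E_t)\,dt \;\leq\; \theta^p(N-1)N^{p-1}\,N^{pj}\mu(E_{\theta N^j}).
\]
Summing over $j\geq 1$ produces a constant multiple of $S$; the $j=0$ term is absorbed in $\mu(U)$ since $E_\theta\subset U$; and the initial piece $\int_0^\theta t^{p-1}\mu(E_t)\,dt$ contributes at most $\theta^p\mu(U)/p$. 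Combining these three contributions gives the upper bound $\norm{g}_{L^p(U,\mu)}^p\leq C(\theta,N,p)\,(\mu(U)+S)$.

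For the lower bound, on $[\theta N^{j-1},\theta N^j]$ with $j\geq 1$ I would use $t^{p-1}\geq (\theta N^{j-1})^{p-1}$ together with $\mu(E_t)\geq \mu(E_{\theta N^j})$, which gives
\[
\int_{\theta N^{j-1}}^{\theta N^j} t^{p-1}\mu(E_t)\,dt \;\geq\; \theta^p(N-1)N^{-p}\,N^{pj}\mu(E_{\theta N^j}),
\]
and summing over $j\geq 1$ produces $p\,\theta^p(N-1)N^{-p}\,S \leq \norm{g}_{L^p(U,\mu)}^p$. The equivalence $g\in L^p(U,\mu)\Leftrightarrow S<\infty$ follows at once from the two-sided bound, using $\mu(U)<\infty$ (a consequence of $U$ being bounded and $\mu$ being locally integrable, which is built into the notion of a weight). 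Since the argument is entirely bookkeeping on geometric sums, I do not anticipate a real obstacle; the only point requiring mild care is aligning the powers of $N$ on the interval $[\theta N^{j-1},\theta N^j]$ with the exponent $pj$ appearing in the summand $N^{pj}\mu(E_{\theta N^j})$ of $S$, and making sure that the $j=0$ tail is tracked on the $\mu(U)$ side of the inequality rather than on the $S$ side.
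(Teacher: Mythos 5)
Your proof is correct, and the paper itself gives no proof of this lemma --- it introduces it with ``we state a standard result in measure theory'' and moves on. The layer-cake/distribution-function argument you use, decomposing $\int_0^\infty p\,t^{p-1}\mu(E_t)\,dt$ along the geometric grid $\{\theta N^j\}_{j\geq 0}$ and exploiting that both $t^{p-1}$ and $t\mapsto\mu(E_t)$ vary by bounded factors on each block, is precisely the standard proof that the omission signals, and your bookkeeping of the exponents, the $j=0$ block, and the initial segment $[0,\theta]$ (all charged to $\mu(U)$, which is finite since $U$ is bounded and $\mu$ is locally integrable) is accurate.
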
 \noindent
%========
Now, we discuss about the weighted Hardy-Littlewood maximal operator and its boundedness in weighted spaces. For a given locally integrable function $f$ and a weight $\mu$, we define the weighted Hardy-Littlewood maximal function as 
\[
\mathcal{M}_{\mu}f(x) = \sup_{\rho > 0}\fint_{B_{\rho}(x)}|f(y)| d(\mu(y)). % =  \sup_{\rho > 0} \frac{1}{\mu(B_{\rho} (x)) }\int_{B_{\rho}(x)}|f| \, \mu(x) dx. 
\]
For functions $f$ that are defined on a bounded domain, we define 
\[
\mathcal{M}_{\mu, \Omega}f(x) = \mathcal{M}_{\mu}(f\chi_{\Omega})(x).
\]
We introduce the definition $p$-pair of weights satisfying Sawyer's condition. %For the characterization of weights satisfying Sawyer's pairs, we refer to the well-known paper \cite{Sawyer}.
\begin{definition} \label{Sawyer-pair} Let $\mu, \omega$ be any two positive, locally finite Borel measures on $\R^n$ and let $1 < p < \infty$. The pair of measures $(\mu, \omega)$ is said to satisfy the p-Sawyer's condition if there is a constant $C >0$ such that
\[
\int_{B} \Big( \M_{\mu} (\chi_B \frac{d\sigma}{ d\mu} ) \Big)^p d\omega \leq C \sigma(B), \quad \forall \ \text{ball} \ B \subset \R^n,
\] 
where $\chi_B$ is the characteristic function of the ball $B$,  
\[
d\sigma = \Big(d\mu/d\omega \Big)^{p'} d\omega, \quad \text{and} \quad \frac{1}{p} + \frac{1}{p'} = 1.
\]
\end{definition}
\noindent
The following result is due to E. R. Sawyer in \cite{Sawyer}. Other proofs of this result can be found in \cite{Cruz-Uribe, Verbitsky}.
\begin{theorem} \label{Two-weighted-maximal-ineq}  Let $\mu, \omega$ be any two positive, locally finite Borel measures on $\R^n$ and let $1 < q < \infty$.  Then,
\[
\norm{\M_{\mu}}_{L^{q}(\R^n, \omega) \rightarrow L^{q}(\R^n, \omega) } \leq  C,
\]
if and only if the pair $(\mu, \omega)$ satisfies the $q$-Sawyer's condition.
\end{theorem}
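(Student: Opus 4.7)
I would prove the two directions separately.

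\emph{Necessity} is a direct testing argument. I would substitute $f = \chi_B\,(d\sigma/d\mu)$ into the assumed bound $\|\M_\mu f\|_{L^q(\R^n,\omega)}\le C\|f\|_{L^q(\R^n,\omega)}$. Using $d\sigma = (d\mu/d\omega)^{q'} d\omega$ with $1/q + 1/q' = 1$, the formal chain-rule identities $d\sigma/d\mu = (d\mu/d\omega)^{q'-1}$ together with $q(q'-1) = q'$ give $(d\sigma/d\mu)^q d\omega = d\sigma$, so that $\|f\|_{L^q(\R^n,\omega)}^q = \sigma(B)$. Restricting the left-hand side of the assumed inequality to $B$ yields the $q$-Sawyer condition with the same constant $C$.

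\emph{Sufficiency} is the substantial direction, which I would prove following Sawyer's original testing-condition strategy. First, via the standard $3^n$-shifted-grid argument, I reduce to proving the bound for the dyadic $\mu$-maximal operator $\M_\mu^\Delta f(x) = \sup_{Q\ni x,\,Q\in\Delta} \mu(Q)^{-1}\int_Q|f|\,d\mu$ relative to a fixed dyadic grid $\Delta$. For $f\ge 0$, I run the Calder\'on--Zygmund construction at levels $2^k$: let $\{Q_k^j\}_j$ be the maximal dyadic cubes on which $\mu(Q_k^j)^{-1}\!\int_{Q_k^j} f\,d\mu > 2^k$; these partition $\Omega_k = \{\M_\mu^\Delta f > 2^k\}$, and the remnants $E_k^j := Q_k^j\setminus\Omega_{k+1}$ are pairwise disjoint across all $k,j$ and exhaust $\{\M_\mu^\Delta f>0\}$. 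A layer-cake expansion then yields the discretized bound
\[
\int (\M_\mu^\Delta f)^q\, d\omega \;\lesssim\; \sum_{k,j} \Big(\frac{1}{\mu(Q_k^j)}\int_{Q_k^j} f\,d\mu\Big)^q \omega(E_k^j).
\]

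The heart of the proof is bounding this sum by $C\|f\|_{L^q(\omega)}^q$ using only the $q$-Sawyer hypothesis. My plan is to dualize in $\ell^q$ against a nonnegative sequence $(\alpha_{k,j})$, rewrite $\int_{Q_k^j} f\,d\mu = \int_{Q_k^j} f\,(d\mu/d\sigma)\,d\sigma$, and organize $\{Q_k^j\}$ into a subfamily $\mathcal{P}$ of principal cubes (a child becomes principal when its $\mu$-average of $f$ exceeds twice that of its nearest principal ancestor), for which the sets $E(P) = P\setminus\bigcup\{P'\in\mathcal{P}:P'\subsetneq P\}$ are pairwise disjoint with $\mu(E(P))\gtrsim \mu(P)$. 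On each principal piece the average $\mu(Q_k^j)^{-1}\chi_{Q_k^j}(d\sigma/d\mu)$ is dominated pointwise by $\M_\mu(\chi_P\,d\sigma/d\mu)$, so Sawyer's testing condition applied at each $P$ and summed via a Carleson-embedding argument on the stopping tree produces the desired global $L^{q'}(\omega)$-bound on the dual object, and hence the claim.

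\textbf{Main obstacle.} The delicate point is this last Carleson-embedding step: the testing condition controls $\M_\mu(\chi_Q\,d\sigma/d\mu)$ only locally on each fixed cube $Q$, while the discretized sum mixes contributions from all scales and both measures. Making the two measures $\mu$ (driving the maximal function) and $\omega$ (defining the $L^q$-norm) talk to each other through the bridge measure $\sigma=(d\mu/d\omega)^{q'}\omega$ requires the principal-cube machinery described above, together with the quantitative $\mu$-largeness $\mu(E(P))\gtrsim\mu(P)$, to ensure the principal-cube sum telescopes correctly under the testing bound. Sawyer's original argument carries this out, and the streamlined presentations of Cruz-Uribe and Verbitsky (as cited in the excerpt) provide an alternative route via a dualized dyadic Carleson embedding.
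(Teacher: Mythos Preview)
The paper does not actually prove this theorem: it is stated as a known result due to Sawyer, with the sentence ``The following result is due to E.~R.~Sawyer in \cite{Sawyer}. Other proofs of this result can be found in \cite{Cruz-Uribe, Verbitsky}.'' There is no proof in the paper to compare against.

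Your proposal is a faithful outline of Sawyer's original argument (and of the streamlined versions in the very references the paper cites). The necessity direction via testing on $f=\chi_B\,d\sigma/d\mu$ is correct, and for sufficiency the dyadic reduction, Calder\'on--Zygmund stopping at levels $2^k$, and the principal-cube/Carleson-embedding machinery are exactly the ingredients used in \cite{Sawyer} and \cite{Cruz-Uribe}. So your approach is appropriate and matches the literature the paper defers to; there is simply nothing in the paper itself to contrast it with.
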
 \noindent

We note that if $d\mu (x)= dx$, i.e. $\mu =1$, it follows from \cite{Hunt, Muckenhoupt}, see also \cite[Theorem 1, p. 201]{Stein}, that the pair $(1, \omega)$ satisfies the $q$-Sawyer's condition if and only if the weight function $\omega \in A_q$. Moreover, if $\mu$ is a doubling measure, the pair $(\mu, \mu)$ always satisfies the $q$-Sawyer's condition.
%%======
%The following result on two weighted inequality is due to Sawyer.
%\begin{lemma}[\cite{Sawyer}] \label{two-weighted-norm-inequality} Assume that $(\mu, \omega)$ is a Sawyer's pair of weights for some $ \mu \in A_2$ and $\omega \in A_{p/2}$ with $2 < p< \infty$ satisfying $[\mu]_{A_2} \leq M$ and $[\omega]_{A_{p/2}} \leq M'$. Then, there exists $C = C(M, M', p, n)$ such that
%\[
%\norm{\M_{\mu}}_{L^{p/2}(\R^n, \omega) \rightarrow L^{p/2}(\R^n, \omega) } \leq  C.
%\]
%\end{lemma}
%====== 
\noindent
%The next result is a simple consequence of Vitali's covering lemma and the doubling property of the $A_p$-weights. For its proof, see \cite[Lemma 3.8]{MP-1}. 
%\begin{lemma}\label{Vitali-1}
%Let $\Omega$ be a $(\delta, R_0)$-Reifenberg flat domain with $\delta \in (0,1/8)$ and let $\omega$ be an $A_{p}$ weight for some $p>1$. 
%Let $r_{0} \in (0,R_0/2)$ be a fixed number and $C\subset D \subset \Omega$ be measurable sets for which there exists $0<\epsilon <1$ such that 
%\begin{itemize}
%\item[(i)]  $\omega(C) < \epsilon \omega(B_{r_{0}}(y)) $ for all $y\in \overline{\Omega}$, and 
%\item[(ii)] for all $x\in \Omega$ and $\rho \in (0, 2r_{0}]$, if $\omega (C\cap B_{\rho}(x)) \geq \epsilon \omega(B_{\rho}(x))$, 
%then $B_{\rho}(x)\cap \Omega \subset D. $
%\end{itemize}  
%Then we have the estimate 
%\[
%\omega(C) \leq \epsilon \left(20\right)^{np} [\mu]^{2}_{p} \,\omega(D). 
%\] 
%\end{lemma}
%We also need the following local version of Lemma \ref{Vitali-1}.

We next state the following modified version of the Vitali's covering lemma, which is needed in the paper. The proof of 
this lemma can be found in the Appendix A at the end of the paper.
\begin{lemma}\label{Vitali} Suppose that $\Omega$ is a $(\delta, R_0)$-Reifenberg flat domain with some small numbers $\delta \in (0,1/8)$ and $R_0  >0$. Let  $ \omega \in A_{q}$ be a weight for some $q \in (1,\infty)$ with $[\omega]_{A_p} \leq M$, $R>0$, and $y_0 \in \overline{\Omega}$. Assume that  $\Omega_R(y_0)$ is of type $(A, r_0)$ with some  $r_0 \in (0,  \min\{R_0, R\}/2)$, $C, D$ are measurable sets satsifying $C \subset D \subset \Omega_R(y_0)$, and  there exists $0<\epsilon <1$ such that 
\begin{itemize}
\item[(i)]  $\omega(C) < \epsilon \omega(B_{r_{0}}(y)) $ for almost every $y\in \Omega_R(y_0)$, and 
\item[(ii)] for all $x\in \Omega_R(y_0)$ and $\rho \in (0, r_{0})$, if $\omega (C\cap B_{\rho}(x)) \geq \epsilon \omega(B_{\rho}(x))$, then $B_{\rho}(x)\cap \Omega_R(y_0) \subset D. $
\end{itemize}  
Then the estimate 
\[
\omega(C) \leq \epsilon'\omega(D), \quad \text{for} \quad \epsilon' =  \epsilon 5^{nq} \max\Big\{A^{-1}, 4^n \Big\}^q M^{2}.
\] 
%with some constant $C_0= C_0(n, p)$.
\end{lemma}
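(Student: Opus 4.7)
My plan is a weighted Calder\'on--Zygmund stopping-time argument on the set $C'$ of $\omega$-Lebesgue points of $C$ (a set of full $\omega$-measure in $C$, by the doubling property of $\omega\in A_q$), followed by the Vitali $5B$-covering lemma, and finally a geometric case analysis using the $(A,r_0)$-type and Reifenberg flatness hypotheses to furnish a lower bound
\[
|B\cap \Omega_R(y_0)|\ge \min\{A,4^{-n}\}\,|B|
\]
for any ball $B=B_\rho(x)$ with $x\in \Omega_R(y_0)$ and $\rho<r_0$.

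For each $x\in C'$, the function $f_x(\rho):=\omega(C\cap B_\rho(x))/\omega(B_\rho(x))$ is continuous in $\rho$ (weights are absolutely continuous w.r.t.\ Lebesgue), tends to $1$ as $\rho\to 0^+$ (Lebesgue differentiation), and satisfies $f_x(r_0)<\epsilon$ by hypothesis~(i). I therefore set
\[
\rho_x:=\sup\{\rho\in(0,r_0):\ f_x(\rho)\ge \epsilon\}\in(0,r_0),
\]
so that $\omega(C\cap B_{\rho_x}(x))\ge \epsilon\,\omega(B_{\rho_x}(x))$ (by continuity) while $f_x(\rho)<\epsilon$ for every $\rho\in(\rho_x,r_0]$. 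Hypothesis~(ii) applied at radius $\rho_x$ then yields $B_{\rho_x}(x)\cap \Omega_R(y_0)\subset D$. Applying the Vitali $5B$-lemma to $\{B_{\rho_x}(x)\}_{x\in C'}$ extracts a countable pairwise disjoint subfamily $\{B_i\}=\{B_{\rho_i}(x_i)\}$ with $C'\subset \bigcup_i 5B_i$. For each $i$, I claim $\omega(C\cap 5B_i)<\epsilon\,\omega(5B_i)$: if $5\rho_i\le r_0$ this is the stopping-time property at radius $5\rho_i>\rho_i$; if $5\rho_i>r_0$ then $B_{r_0}(x_i)\subset 5B_i$ and hypothesis~(i) at $y=x_i$ gives $\omega(C\cap 5B_i)\le \omega(C)<\epsilon\,\omega(B_{r_0}(x_i))\le \epsilon\,\omega(5B_i)$.

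For the geometric density bound, I split on whether $B_i$ meets $\partial\Omega\cap\overline{B_R(y_0)}$ and/or $\partial B_R(y_0)\cap\overline{\Omega}$. If $B_i$ meets both, the $(A,r_0)$-type hypothesis delivers density $\ge A$ directly. If $B_i$ meets neither, connectedness forces $B_i\subset \Omega_R(y_0)$ and the density is $1$. If $B_i$ meets $\partial B_R(y_0)$ only, I inscribe a ball of radius $\rho_i/2$ shifted from $x_i$ towards $y_0$ inside $B_i\cap B_R(y_0)=B_i\cap \Omega_R(y_0)$, giving density $\ge 2^{-n}\ge 4^{-n}$. If $B_i$ meets $\partial\Omega$ only, I pick $p\in B_i\cap\partial\Omega$ and work in the Reifenberg coordinate system at $p$ on the scale $2\rho_i$; inscribing a ball of radius $(1-\delta)\rho_i/2>\rho_i/4$ in $B_i\cap\{y_n>2\delta\rho_i\}\subset B_i\cap\Omega=B_i\cap\Omega_R(y_0)$ (using $\delta<1/8$) yields density $\ge 4^{-n}$. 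Two applications of Lemma~\ref{doubling}(i) then give $\omega(5B_i)\le 5^{nq}M\,\omega(B_i)$ and $\omega(B_i)\le M\max\{A^{-1},4^n\}^q\,\omega(B_i\cap\Omega_R(y_0))$. Combining these with the disjointness of $\{B_i\}$ and with $B_i\cap \Omega_R(y_0)\subset D$ produces
\[
\omega(C)\le \sum_i\omega(C\cap 5B_i)<\epsilon\cdot 5^{nq}\max\{A^{-1},4^n\}^q M^2\,\omega(D)=\epsilon'\omega(D).
\]

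The main obstacle I anticipate is the density lower bound in the two mixed boundary cases, where only one of the $(A,r_0)$-type or Reifenberg hypotheses is available and the constant $4^{-n}$ must be achieved uniformly in $\delta<1/8$; in particular, the Reifenberg case requires carefully extending the flat approximation from the boundary point $p$ outward to scale $2\rho_i$ and then inscribing a sub-ball whose radius survives the perturbation $2\delta\rho_i$ in the hyperplane direction.
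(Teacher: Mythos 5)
Your proposal follows exactly the paper's route: stopping-time radii built from continuity of $\rho\mapsto\omega(C\cap B_\rho(x))/\omega(B_\rho(x))$ plus the Lebesgue differentiation theorem, a Vitali $5B$ covering extraction, two applications of Lemma~\ref{doubling} (one for $\omega(5B_i)\le 5^{nq}M\,\omega(B_i)$ and one converting a Lebesgue-density bound into an $\omega$-density bound), a case analysis on $B_i$'s position relative to $\partial\Omega$ and $\partial B_R(y_0)$, and hypothesis (ii) applied at the stopping radii to land in $D$. The one place you deviate is the density bound in the case where $B_i$ meets $\partial\Omega\cap\overline{B_R(y_0)}$ but not $\partial B_R(y_0)\cap\overline{\Omega}$: the paper simply invokes \cite[Lemma 3.8]{MP-1} there, while you give a direct inscribed-ball computation in Reifenberg coordinates.

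That direct computation has a gap. You write $B_i\cap\{y_n>2\delta\rho_i\}\subset B_i\cap\Omega=B_i\cap\Omega_R(y_0)$, but the asserted equality $B_i\cap\Omega=B_i\cap\Omega_R(y_0)$ (equivalently, $B_i\cap\Omega\subset B_R(y_0)$) is not automatic. The hypothesis of the case gives only $B_i\cap\partial B_R(y_0)\cap\overline{\Omega}=\emptyset$, hence $B_i\cap\Omega$ avoids $\partial B_R(y_0)$; since $B_i\cap\Omega$ need not be connected, one cannot conclude that every connected component (in particular the one containing your inscribed ball $B'$, which need not contain $x_i$) lies on the same side of $\partial B_R(y_0)$ as $x_i$. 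What you actually need is that the specific inscribed ball $B'$ meets $B_R(y_0)$, and then the connectedness of $B'$ together with $B'\cap\partial B_R(y_0)=\emptyset$ finishes it; but you have not arranged the center of $B'$ to guarantee this. One way to close the gap is to take $B'$ inside the \emph{convex} set $B_i\cap\{y_n>2\delta\rho_i\}$ and place it so that it shares points with the component of $B_i\cap\Omega$ containing $x_i$ (e.g.\ by choosing $p$ and $B'$ near the segment joining $x_i$ to $p$), or, as the paper does, simply cite \cite[Lemma 3.8]{MP-1}, whose proof settles precisely this point. Aside from this, your handling of the remaining cases and of the bound $\omega(C\cap 5B_i)<\epsilon\,\omega(5B_i)$ (splitting on $5\rho_i\lessgtr r_0$, a point the paper leaves implicit) is correct, and the final assembly matches the paper's, yielding $\epsilon'=\epsilon\,5^{nq}\max\{A^{-1},4^n\}^qM^2$.
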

%\begin{proof} See Appendix A. 
%\end{proof}
A simple version of Lemma \ref{Vitali} with $\Omega = B_R$ is also used in the paper for the interior estimates, we therefore state it here for later reference. 
\begin{lemma}\label{Vitali-ball} Suppose $\omega \in A_q$ with some $1 < q < \infty$, and suppose that $R>0$, $C, D$ are measurable sets satisfying $C \subset D\subset B_R$. Assume also that there are $r_0  \in (0, R/2)$, and   $0<\epsilon <1$ such that 
\begin{itemize}
\item[(i)]  $\omega(C) < \epsilon \omega(B_{r_{0}}(y)) $ for almost every $y\in B_R$, and 
\item[(ii)] for all $x\in B_R(y_0)$ and $\rho \in (0, r_{0})$, if $\omega (C\cap B_{\rho}(x)) \geq \epsilon \omega(B_{\rho}(x))$, then $B_{\rho}(x)\cap B_R \subset D. $
\end{itemize}  
Then
\[
\omega(C) \leq \epsilon'\omega(D), \quad \text{for} \quad \epsilon' =  \epsilon 20^{nq}  [\omega]_{A_q}^{2}.
\] 
%with some constant $C_0= C_0(n, p)$.
\end{lemma}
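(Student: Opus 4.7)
The plan is to combine a careful stopping-time selection of the ball radius with the classical Vitali $5r$-covering lemma. Hypothesis (i) forces the $\omega$-density of $C$ to drop below $\epsilon$ at scale $r_0$, while hypothesis (ii) traps each selected ball inside $D$. The crucial step, which alone produces the $\epsilon$ factor in $\epsilon'$, is to choose the selection radius $\rho_x$ so that the density of $C$ is \emph{above} $\epsilon$ at scale $\rho_x$ (so that (ii) applies) yet \emph{equal to} $\epsilon$ at the dilated scale $5\rho_x$ (so that each $5B_i$ in the Vitali family absorbs a factor $\epsilon$ when estimating $\omega(C \cap 5B_i)$).

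First I would construct such a $\rho_x$ for $\omega$-almost every $x \in C$. Since $\omega\in A_q$ is absolutely continuous and doubling, the Lebesgue differentiation theorem gives $\omega(C \cap B_\rho(x))/\omega(B_\rho(x)) \to 1$ as $\rho\to 0^+$, while hypothesis (i) together with $x\in B_R$ gives $\omega(C \cap B_{r_0}(x))/\omega(B_{r_0}(x)) < \epsilon$. Because $\omega$ has no atoms, both $\rho\mapsto \omega(B_\rho(x))$ and $\rho\mapsto \omega(C\cap B_\rho(x))$ are continuous, so the ratio is continuous, and the intermediate value theorem furnishes a smallest $\rho_x^{*} \in (0, r_0)$ at which the ratio equals $\epsilon$; on $(0,\rho_x^{*})$ the ratio is strictly above $\epsilon$ (any interior crossing would contradict minimality). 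Setting $\rho_x := \rho_x^{*}/5$, I obtain simultaneously
\[
\omega(C \cap B_{\rho_x}(x)) > \epsilon\,\omega(B_{\rho_x}(x)) \qquad\text{and}\qquad \omega(C \cap B_{5\rho_x}(x)) = \epsilon\,\omega(B_{5\rho_x}(x)),
\]
and hypothesis (ii) applied to the first inequality gives $B_{\rho_x}(x)\cap B_R \subset D$.

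Next I apply the standard Vitali $5r$-covering lemma to $\{B_{\rho_x}(x)\}_{x}$ (the radii are bounded by $r_0/5$) to extract a countable pairwise-disjoint subfamily $\{B_i := B_{\rho_i}(x_i)\}_{i}$ with $C \subset \bigcup_i 5B_i$ up to an $\omega$-null set. Since $x_i \in B_R$ and $\rho_i < r_0 < R/2$, an elementary inscribed-ball argument yields the geometric bound $|B_i\cap B_R|/|B_i| \geq 2^{-n}$. Two applications of Lemma~\ref{doubling}(i), once on the inclusion $B_i \subset 5B_i$ and once on $B_i\cap B_R \subset B_i$, combined with the upper density bound from the preceding step and the inclusion $B_i\cap B_R \subset D$, then give
\begin{align*}
\omega(C) &\leq \sum_i \omega(C\cap 5B_i) \leq \epsilon \sum_i \omega(5B_i) \leq \epsilon\,[\omega]_{A_q}\,5^{nq}\sum_i \omega(B_i) \\
&\leq \epsilon\,[\omega]_{A_q}^2\cdot 10^{nq}\sum_i \omega(B_i\cap B_R) \leq \epsilon\,[\omega]_{A_q}^2\cdot 10^{nq}\,\omega(D) \leq \epsilon'\,\omega(D),
\end{align*}
where the final bound uses disjointness of $\{B_i\}$ and $10^{nq} \leq 20^{nq}$.

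The main obstacle is the two-sided density control at the stopping scale. Any one-sided approach -- for instance, defining $\rho_x$ directly via the maximal function applied to $\chi_C$ -- would at best yield $\omega(C) \lesssim \epsilon^{-1}\omega(D)$, which is in the wrong direction. The extra smallness factor $\epsilon$ in the conclusion comes precisely from the upper density bound at the \emph{expanded} scale $5\rho_x$, which forces us to slide down by a factor of $5$ from the critical scale $\rho_x^{*}$ where the density first hits $\epsilon$. A secondary technical ingredient is the geometric bound $|B_i\cap B_R|/|B_i|\geq 2^{-n}$, which is what allows us to trade $\omega(B_i)$ for $\omega(B_i\cap B_R)\leq \omega(D\cap B_i)$ and then collect the disjoint pieces inside $\omega(D)$.
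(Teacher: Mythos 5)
Your proof is correct and follows essentially the same stopping-time plus Vitali $5r$-covering strategy as the paper's proof of Lemma~\ref{Vitali} (of which this lemma is the $\Omega=B_R$ special case). The only cosmetic difference is your choice of stopping radius: the paper takes the \emph{largest} $\rho_x$ with $\omega(C\cap B_{\rho_x}(x))=\epsilon\,\omega(B_{\rho_x}(x))$, which automatically gives density strictly below $\epsilon$ at scale $5\rho_x$, whereas you take the \emph{smallest} critical radius $\rho_x^*$ and rescale to $\rho_x^*/5$; both deliver the same two-sided density control, and your final constant $10^{nq}[\omega]_{A_q}^2$ is in fact slightly sharper than the stated $20^{nq}[\omega]_{A_q}^2$ (the paper's constant comes from the weaker geometric bound $|B_\rho(x)\cap B_R|\geq 4^{-n}|B_\rho(x)|$ used in its Case~I).
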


Finally, we state a remark which follows directly from Definition \ref{Reifenberg-flatness} about Reifenberg flat domains, see \cite[Remark 3.2]{MP} for details.
\begin{remark} \label{Rei-flat-remark} If $\Omega$ is a $(\delta, R_{0})$ flat domain with $\delta \in (0, 1/2)$, then for any point $x$ on the boundary and $ \rho \in (0, R_{0}/2)$,  there exists a coordinate system ${\vec{z}_1,\vec{z}_2,\cdots,  \vec{z}_n}$  with the origin at some point in the interior of  $\Omega$ such that in this coordinate system $x = -\delta \rho \vec{z}_n $ and
\[
B_{\rho}^{+}(0) \subset \Omega_{\rho}\subset B_{\rho}(0) \cap \{(z_{1}, \cdots, z_{n-1}, z_n): z_{n} > -4\delta \rho\}.\]
\end{remark}
%===============
\section{Interior $W^{1,p}$- regularity theory} \label{interior-approx-section}
%\subsection{Statements of the main results} Let $\Lambda >0, M_0 \geq 1, M_1 >0$ be fixed, and let $\A \in \U_{B_{2R}, \K}(\Lambda, M_0, M_1, \omega_0)$ with some open interval $\K \subset \K$. We study the equation
%\[
%\text{div}[\A(x,u, \nabla u)] = \text{div}[\F], \quad \text{in} \quad B_{2R}.
%\]
%
\subsection{Interior approximation estimates} In this section, let $r >0$ and an open interval $\K \subset \R$. For $\A \in \U_{B_r, \K}(\Lambda, M_0, M_1, \omega_0)$, let $\A_\lambda$ be defined as in Remark \ref{remark-1} with some $\lambda >0$. We focus on the equation
\begin{equation} \label{eqn-in}
\text{div}[\A_\lambda(x, u, \nabla u)] = \text{div}[\F] , \quad \text{in} \quad B_r,
\end{equation}
We need to state what we mean by weak solution of \eqref{eqn-in}.
\begin{definition} A function $u \in W^{1,2}(B_r, \mu)$ is said to be a weak solution of \eqref{eqn-in} if $u(x) \in \K_\lambda:= \K/\lambda$ for all most every $x \in B_r$ and
\[
\int_{B_r} \wei{\A_\lambda(x, u, \nabla u), \nabla \varphi } dx = \int_{B_r} \wei{\F, \nabla \varphi} dx
\]
holds for all $\varphi \in C_0^\infty(B_r)$.
\end{definition} 
Let $\tA$ be the asymptotical matrix of $\A$ with the asymptotical weight function $\mu \in A_2$, and we recall that
\begin{equation} \label{ellip-interior}
\Lambda^{-1} |\eta|^2 \mu(x) \leq \wei{\tA(x) \eta, \eta} \leq \Lambda |\eta|^2 \mu(x), \quad \text{for a.e.} \ x \in B_r, \quad \forall \ \eta \in \R^n.
\end{equation}
and
\begin{equation} \label{M_0}
[\mu]_{A_2} \leq M_0.
\end{equation}
The following Proposition is the main result of the section.
%=====================
\begin{proposition} \label{interio-approx-lemma} Let $\Lambda >0, M_0\geq 1, M_1 >0$ and a continuous function $\omega_0$ satisfying \eqref{omega-limit}, $\norm{\omega_0}_{\infty} \leq M_1$. Then, for every small number $\epsilon >0$, there exist $\delta = \delta(\epsilon, \Lambda, M_0, n) >0$ sufficiently small and  $\lambda =  \lambda(\epsilon, \Lambda, M_0, M_1, \omega_0, n) \geq 1$  such that the following holds:  Assume that $\A \in \U_{B_{r}(y), \K}( \Lambda, M_0, M_1, \omega_0)$, for some $r>0, y \in \mathbb{R}^n$, some open interval $\K \subset \mathbb{R}$, and with its asymptotical matrix $\tA$, and weight $\mu \in A_2$. 
Assume also that 
\[
\frac{1}{ \mu(B_r(y))}\int_{B_r(y)} |\tA(x) - \wei{\tA}_{B_r(y)}|^2 \mu^{-1} (x) dx \leq \delta.
\quad \text{and} \quad 
\frac{1}{ \mu(B_{7r/8}(y))} \int_{B_{7r/8}(y)} \Big| \frac{\F}{\mu} \Big|^2 \mu(x) dx  \leq \delta.
\]
Then, for every $\lambda \geq \lambda_0$, if  $u \in W^{1,2}(B_r(y), \mu)$ is a weak solution of
\[
\textup{div}[\A_\lambda(x, u, \nabla u)] = \textup{div}[\F], \quad \text{in} \quad B_r(y)
\]
satisfying
\[
\frac{1}{ \mu(B_{7r/8}(y))} \int_{B_{7r/8}(y)} |\nabla u|^2 \mu(x) dx \leq 1,
\]
then there is a Lipschitz function $v$ defined on $B_{3r/4}(y)$, and constant $C = C(\Lambda, M_0, n)$ such that
\[
\frac{1}{ \mu(B_{r/2}(y))}\int_{B_{r/2}(y)} |\nabla u - \nabla v|^2 \mu(x) dx \leq \epsilon,
\quad \text{and} \quad \norm{\nabla v}_{L^\infty(B_{r/2}(y))} \leq C .
\]
More precisely,  $v \in W^{1, 2}_{\textup{loc}}(B_{3r/4}(y))$ is a weak solution of the equation
\[
\textup{div}[\A_0 \nabla v]   =  0, \quad \text{in} \quad B_{\frac{3r}{4}}(y),
\]
for some constant symmetric matrix $\A_0$ satisfying $|\wei{\tA}_{B_r(y)} - \A_0| \leq \epsilon \frac{\mu(B_r(y))}{|B_r(y)|}.$
\end{proposition}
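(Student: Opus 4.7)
The plan is a two-step perturbation. First I approximate $u$ by the weak solution $w$ of the linear weighted equation $\mathrm{div}[\tA(x)\nabla w]=0$, using the smallness of $\F/\mu$ together with the asymptotic Uhlenbeck inequality \eqref{lambda-asymp-Uh} and the choice of $\lambda$ large. Then I approximate $w$ by the weak solution $v$ of the constant--coefficient equation $\mathrm{div}[\A_0\nabla v]=0$ with $\A_0=\wei{\tA}_{B_r(y)}$, using the weighted BMO smallness hypothesis. Since $\A_0$ equals the average, the closeness $|\wei{\tA}_{B_r(y)}-\A_0|=0\leq \epsilon\mu(B_r(y))/|B_r(y)|$ is automatic; the Lipschitz bound on $\nabla v$ follows because $\A_0$ is constant symmetric positive definite, so $v$ is harmonic after the linear change of variables $z=\A_0^{-1/2}x$.

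\textbf{Step 1 (nonlinear to linear).} Let $w\in W^{1,2}(B_{7r/8}(y),\mu)$ with $w-u\in W^{1,2}_0(B_{7r/8}(y),\mu)$ be the unique weak solution of $\mathrm{div}[\tA(x)\nabla w]=0$; existence follows from Lax--Milgram applied to the bilinear form $(f,g)\mapsto\int \tA\nabla f\cdot\nabla g\,dx$, coercive by \eqref{ellip-interior}. Subtracting the weak formulations and testing with $u-w$ yields
\[
\int_{B_{7r/8}(y)} \tA(\nabla u-\nabla w)\cdot\nabla(u-w)\,dx = \int_{B_{7r/8}(y)} \F\cdot\nabla(u-w)\,dx - \int_{B_{7r/8}(y)} [\A_\lambda(x,u,\nabla u)-\tA\nabla u]\cdot\nabla(u-w)\,dx.
\]
The left side is $\geq\Lambda^{-1}\int|\nabla(u-w)|^{2}\mu\,dx$. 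The $\F$ term is controlled by Cauchy--Schwarz, Young's inequality, and the smallness of $\F/\mu$. For the Uhlenbeck term I exploit \eqref{lambda-asymp-Uh} and \eqref{omega-limit} via a dichotomy: given $\epsilon'>0$, pick $s_0$ with $\omega_0(z,s)<\epsilon'$ for $s\geq s_0$. On $\{\lambda|\nabla u|\geq s_0\}$ the pointwise error bound becomes $|\A_\lambda-\tA\nabla u|/\mu\leq \epsilon'(|\nabla u|+1/\lambda)$, while on the complement $|\nabla u|<s_0/\lambda$ so the error is $\leq C(\epsilon',M_1,s_0)/\lambda$. Combining with the energy hypothesis $\frac{1}{\mu(B_{7r/8}(y))}\int|\nabla u|^{2}\mu\leq 1$ gives that the Uhlenbeck $L^{2}(\mu)$-norm over $B_{7r/8}(y)$ is $\lesssim \epsilon'+C/\lambda$ times $\mu(B_{7r/8}(y))^{1/2}$. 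Taking $\epsilon'$ small, then $\lambda$ large, and absorbing produces $\frac{1}{\mu(B_{7r/8}(y))}\int|\nabla u-\nabla w|^{2}\mu\,dx\leq \epsilon/2$, along with the auxiliary energy bound $\frac{1}{\mu(B_{7r/8}(y))}\int|\nabla w|^{2}\mu\,dx\leq C(\Lambda)$.

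\textbf{Step 2 (linear to constant coefficient).} Let $v$ solve $\mathrm{div}[\A_0\nabla v]=0$ on $B_{3r/4}(y)$ matching $w$ on the boundary. Averaging \eqref{ellip-interior} over $B_r(y)$ shows $\A_0$ is symmetric with eigenvalues in $[\Lambda^{-1}\bar\mu,\Lambda\bar\mu]$ with $\bar\mu=\mu(B_r(y))/|B_r(y)|$, so the change of variables $z=\A_0^{-1/2}x$ turns the equation into Laplace's, hence $v$ is smooth. The classical interior gradient estimate together with the energy comparison $\int\A_0|\nabla v|^{2}\leq\int\A_0|\nabla w|^{2}$ --- converted to an $L^2$ Lebesgue bound via Lemma \ref{doubling} and the $A_2$ property --- yields the uniform Lipschitz bound $\|\nabla v\|_{L^\infty(B_{r/2}(y))}\leq C(\Lambda,M_0,n)$. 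Subtracting the equations for $v$ and $w$ and testing with $w-v$ gives
\[
\int \tA(\nabla w-\nabla v)\cdot\nabla(w-v)\,dx = \int (\A_0-\tA)\nabla v\cdot\nabla(w-v)\,dx.
\]
The left is $\geq \Lambda^{-1}\int|\nabla(w-v)|^{2}\mu\,dx$; the right is estimated by splitting $|\A_0-\tA|\,|\nabla v|\,|\nabla(w-v)| = (|\A_0-\tA|\mu^{-1/2}|\nabla v|)(\mu^{1/2}|\nabla(w-v)|)$, applying Cauchy--Schwarz together with $\|\nabla v\|_{L^\infty}\leq C$ and the weighted BMO smallness $\int|\tA-\A_0|^{2}\mu^{-1}\leq \delta\mu(B_r(y))$, and then absorbing, producing $\frac{1}{\mu(B_{r/2}(y))}\int|\nabla w-\nabla v|^{2}\mu\,dx\leq C\delta\leq\epsilon/2$ for $\delta$ small. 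The triangle inequality combined with Step 1 delivers the desired bound on $\nabla u-\nabla v$.

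\textbf{Main obstacle.} The hardest step is Step 2, where the frozen equation for $v$ is naturally set in unweighted $W^{1,2}$ while the approximation is measured in $L^{2}(\mu)$ and the datum $w$ lies only in $W^{1,2}(\mu)$. Properly defining $v$ so that the test function $w-v$ lies in both function spaces simultaneously, and obtaining a uniform $L^\infty$ bound for $\nabla v$ independent of $r$ and of the weight, requires careful use of the $A_2$ doubling property (Lemma \ref{doubling}) and the fact that $\A_0$ is comparable to $\bar\mu I$, so as to convert between weighted and Lebesgue integrals at several points in the argument without losing quantitative control.
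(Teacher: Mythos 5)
Your Step~1 reproduces the paper's Lemma~\ref{step-1-comparision} faithfully: the dichotomy on $\omega_0(\lambda u,\lambda|\nabla u|)$ according to the size of $\lambda|\nabla u|$, followed by Cauchy--Schwarz, Young, and absorption, is precisely the paper's argument, and the resulting bounds match. The genuine gap is in Step~2, which the paper delegates entirely to \cite[Proposition~4.4]{CMP}; your reconstruction of that step does not close.

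The specific problem is the passage from weighted to unweighted $L^2$ control of $\nabla w$. After Step~1 you only have $\fint_{B_{7r/8}(y)}|\nabla w|^2\,d\mu\leq C(\Lambda)$. This does \emph{not} bound $\fint_{B_{3r/4}(y)}|\nabla w|^2\,dx$, and the claim that the weighted energy bound is ``converted to an $L^2$ Lebesgue bound via Lemma~\ref{doubling} and the $A_2$ property'' is false for a general function: take $\mu(x)=|x|^\alpha$ with $0<\alpha<n$, an $A_2$ weight, and $f(x)=|x|^{-\beta}$ with $n/2<\beta<(n+\alpha)/2$; then $f\in L^2(B,\mu)$ but $f\notin L^2(B)$. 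Doubling and $A_2$ compare \emph{measures of sets}, not $L^2(\mu)$ and $L^2$ norms of a fixed function. As a consequence: (a) the Dirichlet problem $\div[\A_0\nabla v]=0$ with boundary data $w$ on $\partial B_{3r/4}(y)$ is not known to be well-posed in unweighted $W^{1,2}(B_{3r/4}(y))$, since $w$ has no a~priori finite Dirichlet energy there; (b) the minimization inequality $\int\A_0|\nabla v|^2\leq\int\A_0|\nabla w|^2$ that you invoke to derive the $L^\infty$ bound on $\nabla v$ is unavailable; and (c) testing the $\tA$-equation with $w-v$ needs $v\in W^{1,2}(B_{3r/4}(y),\mu)$, which has not been established.

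The ingredient that \cite[Proposition~4.4]{CMP} relies on and that your proposal omits is a quantitative higher-integrability (Gehring / reverse-H\"older) estimate for weak solutions of $\div[\tA\nabla w]=0$ under \eqref{ellip-interior} with $\mu\in A_2$: a weighted Caccioppoli inequality combined with the weighted Sobolev--Poincar\'e inequality of Fabes--Kenig--Serapioni \cite{Fabes} and Gehring's lemma yields $\nabla w\in L^{2+\sigma}_{\mathrm{loc}}(\mu)$ with $\sigma=\sigma(\Lambda,M_0,n)>0$ and explicit bounds. This self-improvement is what makes the construction and estimation of the comparison function $v$ possible; note also that the statement allows any constant $\A_0$ with $|\wei{\tA}_{B_r(y)}-\A_0|\leq\epsilon\mu(B_r(y))/|B_r(y)|$ rather than forcing $\A_0=\wei{\tA}_{B_r(y)}$, precisely because a different normalization of the frozen matrix is more convenient in that argument. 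Your ``Main obstacle'' paragraph correctly identifies the difficulty, but the body of Step~2 does not actually resolve it.
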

The rest of the section is to prove Proposition \ref{interio-approx-lemma}. By using translation $x \rightarrow x -y$, we can assume $y =0$. We split the procedure for the proof into two steps of approximations. %Proposition \ref{interio-approx-lemma} follows directly from Lemma \ref{step-1-comparision} and Lemma \ref{L2-gradient-aprox} below.
%===========
 \subsubsection{First approximation estimates} Our first step is to 
approximate the solution $u$ of \eqref{eqn-in} by the solution $w$ of the following equation
\begin{equation} \label{w-interio}
\left\{
\begin{array}{cccl}
\text{div}[\tilde{\A}(x) \nabla w] & =& 0, &\quad \text{in}\quad B_{\frac{7r}{8}}, \\
w & =& u, & \quad \text{on} \quad \partial B_{\frac{7r}{8}}.
\end{array}\right.
\end{equation}
%where $\tA$ is the asymptotically Uhlenbeck matrix of $\A$. %For the readers' convenience, recall the definition of weak solutions of the above equation.
%%============
%\begin{definition} A function $w \in W^{1,2}(B_{\frac{7r}{8}}, \mu)$ is said to be a weak solution of \eqref{w-interio} if $w - u \in W^{1,2}_0(B_{\frac{7r}{8}}, \mu)$ and
%\[
%\int_{B_{7r/8}} \wei{\tilde{\A}(x)\nabla w, \nabla \varphi} dx = 0
%\]
%holds for all $\varphi \in C_0^\infty(B_{\frac{7r}{8}})$.
%\end{definition}
We note that for a given $u \in W^{1,2}(B_{7r/8}, \mu)$, it follows from \cite[Theorem 2.2]{Fabes} that there exists a unique weak solution $w \in W^{1,2}(B_{7r/8}, \mu)$ of \eqref{w-interio}. Our result of this section is the following weighted energy estimate for the weak solutions $u, w$.
%===========
\begin{lemma} \label{step-1-comparision} Assume that $u \in W^{1,2} (B_r)$ is a weak solution of \eqref{eqn-in} with some $\lambda >0$ and let $w \in W^{1,2}(B_{\frac{7r}{8}}, \mu)$ be 
the weak solution of \eqref{w-interio}. Then, there is a constant $C(\Lambda) >0$ such that
\begin{equation} \label{energy-w-step-1}
\fint_{B_{\frac{7r}{8}}} |\nabla w|^2 d\mu(x)   \leq C(\Lambda) \fint_{B_{\frac{7r}{8}}} |\nabla u|^2 d \mu (x).
\end{equation}
Moreover, for every $\delta >0$, there is $K_\delta >0$ depending only on $\delta$ and $\omega_0$ such that
\begin{equation} \label{u-w-step-1}
\fint_{B_{\frac{7r}{8}}} |\nabla (u-w)|^2 d \mu(x)  
\leq  C(\Lambda) \left[ \delta^2 \fint_{B_{\frac{7r}{8}}} |\nabla u|^2 d \mu(x) + \fint_{B_{\frac{7r}{8}}} \Big| \frac{\F}{\mu}\Big|^2d \mu(x)  + \frac{(M_1 K_\delta +\delta)^2}{\lambda^2} \right].
\end{equation}
\end{lemma}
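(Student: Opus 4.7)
The natural approach is to use $u - w \in W^{1,2}_0(B_{7r/8}, \mu)$ as a test function in both weak formulations; this is admissible after the standard approximation by $C_0^\infty(B_{7r/8})$ functions. For the energy bound \eqref{energy-w-step-1}, I would test the equation for $w$ against $w - u$ itself. Since $w = u$ on $\partial B_{7r/8}$, this yields
\[
\int_{B_{7r/8}} \wei{\tA(x) \nabla w, \nabla w}\, dx = \int_{B_{7r/8}} \wei{\tA(x) \nabla w, \nabla u}\, dx.
\]
Applying \eqref{ellip-interior} to the left and Cauchy--Schwarz followed by Young's inequality to the right (each with the weight $\mu$) gives \eqref{energy-w-step-1} with $C(\Lambda) = \Lambda^4$ or similar.

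For the comparison estimate \eqref{u-w-step-1}, I subtract the two weak formulations and test against $u - w$:
\[
\int_{B_{7r/8}} \wei{\A_\lambda(x,u,\nabla u) - \tA(x)\nabla w,\, \nabla(u - w)}\, dx = \int_{B_{7r/8}} \wei{\F, \nabla(u-w)}\, dx.
\]
I split the left-hand integrand as $\wei{\tA(\nabla u - \nabla w), \nabla(u-w)} + \wei{\A_\lambda(x,u,\nabla u) - \tA(x)\nabla u, \nabla(u-w)}$. The coercivity \eqref{ellip-interior} bounds the first piece below by $\Lambda^{-1} \int |\nabla(u-w)|^2 \mu\, dx$, while \eqref{lambda-asymp-Uh} gives the pointwise control
\[
|\A_\lambda(x,u,\nabla u) - \tA(x)\nabla u| \leq \omega_0(\lambda u, \lambda|\nabla u|)\bigl(\lambda^{-1} + |\nabla u|\bigr)\mu(x).
\]
A weighted Young inequality then absorbs half of the coercive term into the left side and reduces the proof to estimating
\[
\int_{B_{7r/8}} \omega_0^2(\lambda u, \lambda|\nabla u|)(\lambda^{-1} + |\nabla u|)^2 \mu\, dx \quad \text{and} \quad \int_{B_{7r/8}} |\F/\mu|^2 \mu\, dx.
\]

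The only nonroutine step is controlling the $\omega_0^2$ integral, and this is where \eqref{omega-limit} is essential. Given $\delta > 0$, I choose $K_\delta \geq 1$ so that $\omega_0(z, s) < \delta$ uniformly in $z \in \overline{\K}$ whenever $s \geq K_\delta$. I then split $B_{7r/8} = E_1 \cup E_2$ according to whether $\lambda |\nabla u| < K_\delta$ or $\lambda |\nabla u| \geq K_\delta$. On $E_1$ I bound $\omega_0 \leq M_1$ and $(\lambda^{-1} + |\nabla u|)^2 \leq (1 + K_\delta)^2/\lambda^2$, which contributes at most $M_1^2(1 + K_\delta)^2 \mu(B_{7r/8})/\lambda^2$. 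On $E_2$ I use $\omega_0 \leq \delta$ and $(\lambda^{-1} + |\nabla u|)^2 \leq 2(\lambda^{-2} + |\nabla u|^2)$, which contributes at most $2\delta^2 \mu(B_{7r/8})/\lambda^2 + 2\delta^2 \int |\nabla u|^2 \mu\, dx$. Combining the two and dividing by $\mu(B_{7r/8})$ yields the three terms in \eqref{u-w-step-1}; the $\lambda^{-2}$ pieces combine to at most $C(M_1 K_\delta + \delta)^2/\lambda^2$ after absorbing the fixed constant $M_1$. The main obstacle is really just this bookkeeping: once \eqref{omega-limit} is invoked uniformly in $z$ to produce $K_\delta$, the argument becomes a standard perturbation computation whose novelty lies in carrying the weight $\mu$ through each step via \eqref{ellip-interior}.
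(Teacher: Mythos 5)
Your proof is correct and follows essentially the same route as the paper: test the equation for $w$ against $u-w$ (admissible since $u-w \in W^{1,2}_0(B_{7r/8},\mu)$) to get the energy bound, then subtract the weak formulations and test against $u-w$, isolate the coercive term $\int\wei{\tA\nabla(u-w),\nabla(u-w)}\,dx$, bound the perturbation term via \eqref{lambda-asymp-Uh}, and exploit \eqref{omega-limit} together with $\|\omega_0\|_\infty\le M_1$ to produce $K_\delta$.

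The only place you organize things differently from the paper is in how the dichotomy $\lambda|\nabla u| < K_\delta$ versus $\lambda|\nabla u| \ge K_\delta$ is used. The paper folds it into the pointwise scalar inequality $\omega_0(z,s)(1+s) \le \delta(1+s) + M_1 K_\delta$ for all $s\ge 0$ \emph{before} applying H\"older and Young, so that the integrand is bounded by $[\delta|\nabla u| + \lambda^{-1}(M_1 K_\delta + \delta)]\,|\nabla(u-w)|\,\mu$. You instead apply a weighted Young inequality first, reducing to an $L^2$-bound for $\omega_0^2(\lambda u, \lambda|\nabla u|)(\lambda^{-1}+|\nabla u|)^2\mu$, and then split the domain of integration into $E_1$ and $E_2$. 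These are the same estimate rearranged by Fubini-type thinking, and the resulting constants agree up to a fixed numerical factor: since $K_\delta\ge 1$ one has $M_1^2(1+K_\delta)^2 \le 4(M_1 K_\delta)^2 \le 4(M_1 K_\delta+\delta)^2$, so no genuine $M_1$-dependence enters $C$, contrary to what your parenthetical "after absorbing the fixed constant $M_1$" might suggest. In fact the paper's pointwise inequality as literally written is slightly optimistic (for $s<K_\delta$ it only yields $M_1(1+K_\delta)$, not $M_1 K_\delta$); your domain-splitting version tracks the bound cleanly, and the discrepancy is harmless since the constant $4$ is uniform.
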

%==============
\begin{proof} We write $\theta = {\frac{7r}{8}}$. Since $u-w \in W^{1,2}_0(B_\theta, \mu)$, we can use it as a test function for \eqref{w-interio} to 
obtain
\begin{equation} \label{w-energy-step-1}
\int_{B_\theta} \wei{\tilde{\A}(x) \nabla w, \nabla w} dx = \int_{B_\theta} \wei{\tilde{\A}(x) \nabla w, \nabla u}  dx.
\end{equation}
This and \eqref{ellip-interior} imply
\[
\begin{split}
\Lambda^{-1} \int_{B_\theta} |\nabla w|^2 \mu(x) dx & \leq \Lambda \int_{\theta} |\nabla u| |\nabla w| \mu(x) dx \\
& \leq \frac{\Lambda^{-1}}{2} \int_{B_\theta} |\nabla w|^2 \mu(x) dx + C(\Lambda) \int_{B_\theta }
|\nabla u|^2 \mu(x) dx.
\end{split}
\]
Therefore,
\begin{equation*} %\label{gradient-w-est}
\int_{B_\theta } |\nabla w|^2 \mu(x) dx \leq C(\Lambda) \int_{B_\theta} |\nabla u|^2 \mu(x) dx.
\end{equation*}
This estimate proves \eqref{w-energy-step-1}.  Also, from \eqref{w-energy-step-1} it follows that
\[
\begin{split}
& \int_{B_\theta} \wei{\tilde{\A}(x) \nabla (u-w), \nabla (u-w)}dx\\
 & = \int_{B_\theta} \wei{\tilde{\A}(x) \nabla u, \nabla u} dx - 2 \int_{B_\theta} \wei{\tilde{\A}(x) \nabla u, \nabla w} dx + \int_{B_\theta} \wei{\tilde{\A}(x) \nabla w, \nabla w} dx \\
 & = \int_{B_\theta} \wei{\tilde{\A}(x) \nabla u, \nabla u} dx -  \int_{B_\theta} \wei{\tilde{\A}(x) \nabla u, \nabla w} dx = \int_{B_\theta} \wei{\tilde{\A}(x) \nabla u, \nabla u -\nabla w} dx .
\end{split}
\]
On the other hand, use $u-w$ as a test function for \eqref{eqn-in},  we obtain
\[
\int_{B_\theta} \wei{\A_\lambda(x,u, \nabla u), \nabla (u-w)} dx = \int_{B_\theta}\wei{ \F,  \nabla (u-w)} dx. % + \int_{B_\theta} (u -v) f(x)  dx.
\]
Combining the last two equalities, we obtain
\[
\begin{split}
& \int_{B_\theta} \wei{\tilde{\A}(x) \nabla (u-w), \nabla (u-w)}dx\\
& = \int_{B_\theta} \wei{\tilde{\A}(x) \nabla u - \A_\lambda(x,u,\nabla u), \nabla u -\nabla w} dx - \int_{B_\theta} \wei{\F , \nabla (u-w)}dx. % + \int_{B_\theta} (u -v) f(x)  dx.
\end{split}
\]
Therefore, it follows from \eqref{asymp-Uh}-\eqref{ellip}, and \eqref{lambda-asymp-Uh} that
\[
\begin{split}
& \Lambda^{-1} \int_{B_\theta} |\nabla (u-w)|^2 \mu(x) dx\\
& \leq \frac{1}{\lambda} \int_{B_\theta} \omega_0( \lambda u, \lambda |\nabla u|) (1+ \lambda |\nabla u|) |\nabla u - \nabla w| \mu(x)  dx + \int_{B_\theta} |\F||\nabla u - \nabla w| dx.% + \int_{B_\theta} (u -v) f(x)  dx..
\end{split}
\]
It follows from \eqref{omega-limit} that for $\delta >0$, we can find a large number $K_\delta >0$ depending only on $\omega_0$ and $\delta$  such that $\omega_0(z, s) \leq \delta$ for all $s \geq K_\delta$ and for all $z \in \overline{\mathbb{K}} $. This and $\norm{\omega_0}_{\infty} \leq M_1$ in turn imply that
\[
\omega_0(z, s) (1+s) \leq  \delta (1+s) + M_1 K_\delta, \quad \forall s \geq 0, \quad \forall z \in \K,
\]
Hence,
\[
\begin{split}
& \Lambda^{-1} \int_{B_\theta} |\nabla (u-w)|^2 \mu(x) dx\\
& \leq \int_{B_\theta} \Big[\delta |\nabla u| +  \lambda^{-1}\Big (M_1 K_\delta + \delta\Big) \Big] |\nabla u - \nabla w|  \mu(x)  dx +  \int_{B_\theta} |\F|  |\nabla u - \nabla w| dx. % + \int_{B_\theta} (u -v) f(x)  dx.
\end{split}
\]
Then, by using H\"{o}lder's inequality and Young's inequality, we get
\[
\begin{split}
& \Lambda^{-1} \int_{B_\theta} |\nabla (u-w)|^2 \mu(x) dx\\
& \leq \epsilon \int_{B_\theta}|\nabla (u-w)|^2 \mu(x) +
  C(\epsilon) \left[ \delta^2 \int_{B_\theta} |\nabla u|^2 \mu(x) dx  +\lambda^{-2}\Big (M_1 K_\delta + \delta\Big)^2 \mu(B_\theta) + \int_{B_\theta} \Big| \frac{\F}{\mu}\Big|^2 \mu(x) dx   \right]. 
  %  \\& \quad + \epsilon \int_{B_\theta} |u -v|^{p_0} \mu(x) dx + C_\epsilon \int_{B_\theta} \Big| \frac{f}{\mu} \Big |^{p_0'} \mu(x) dx.
\end{split}
\]
Choosing $\epsilon < \Lambda^{-1}/2$, we then obtain
\[
 \int_{B_\theta} |\nabla (u-w)|^2 \mu(x) dx
\leq  C(\Lambda) \left[ \delta^2 \fint_{B_\theta} |\nabla u|^2 d\mu(x)  +\lambda^{-2}\Big (M_1 K_\delta + \delta\Big)^2+ \fint_{B_\theta} \Big| \frac{\F}{\mu}\Big|^2d \mu(x)    \right]  \mu(B_\theta) 
\]
This implies \eqref{u-w-step-1}. The proof is now complete.
\end{proof}
\subsubsection{Second approximation estimates} Our second approximation is the following lemma.
\begin{lemma} \label{L2-gradient-aprox} Let $\Lambda >0, M_0 > 1$.
 For every $\epsilon >0$ sufficiently small, there exists sufficiently small number $\delta' = \delta'(\epsilon, \Lambda, M_0, n)>0$  such that the following statement holds true: For some $r >0$, assume that \eqref{ellip-interior} and \eqref{M_0} hold and %for some $\delta \in (0, \delta_1]$, 
\[ 
 \frac{1}{ \mu(B_{r})}\int_{B_r} |\tilde{\A} -\langle\tilde{\A}\rangle_{B_r}|^{2} \mu^{-1} dx    \leq \delta',\]
then, for  every weak solution $w \in W^{1,2}(B_{7r/8}, \mu)$ of 
\[
\textup{div}[\tilde{\A} \nabla w] = 0, \quad \text{in} \quad B_{7r/8}
\]
satisfying $\fint_{B_{7r/8}} |\nabla w|^2 d\mu \leq C_0(\Lambda)$, there exists a constant matrix $\mathbb{A}_{0}$ and a weak solution $v \in W^{1,2}_{\textup{loc}}(B_{3r/4})$ of 
\[
\textup{div}[\A_0 \nabla v] = 0, \quad \text{in} \quad B_{3r/4}
\]
such that
\[
|\langle\mathbb{A}\rangle_{B_r} - \mathbb{A}_{0}| \leq \frac{\epsilon\mu(B_{r})}{|B_{r}|}  , \quad \text{and} \quad
\fint_{B_{r/2}} |\nabla w -\nabla v|^2 d\mu \leq \epsilon. 
\]
Moreover, there is $C =C(\Lambda, M_0, n)$ such that
\begin{equation} \label{L2-gradient-v-aprox}
\norm{\nabla v}_{L^\infty(B_{r/2})} \leq C.
\end{equation}
\end{lemma}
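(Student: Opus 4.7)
The natural candidate is $\A_0 = \langle\tilde{\A}\rangle_{B_r}$ (Lebesgue average), for which the matrix-closeness bound $|\langle\tilde{\A}\rangle_{B_r}-\A_0| \leq \epsilon\,\mu(B_r)/|B_r|$ holds trivially. Averaging the pointwise ellipticity $\Lambda^{-1}\mu(x)|\xi|^2 \leq \wei{\tilde{\A}(x)\xi,\xi} \leq \Lambda\mu(x)|\xi|^2$ over $B_r$ shows that $\A_0$ is a constant symmetric matrix with ellipticity $\Lambda^{-1}\bar{\mu}\,|\xi|^2 \leq \wei{\A_0\xi,\xi} \leq \Lambda\bar{\mu}\,|\xi|^2$ where $\bar{\mu} := \mu(B_r)/|B_r|$, so $\A_0/\bar{\mu}$ is uniformly elliptic with the standard constants. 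I would then define $v$ as the weak solution of the Dirichlet problem $\textup{div}[\A_0\nabla v]=0$ in $B_{3r/4}$ with $v-w \in W^{1,2}_0(B_{3r/4},\mu)$. Existence and uniqueness follow from Lax-Milgram on the weighted Hilbert space: ellipticity of $\A_0$ gives coercivity of the bilinear form, while the embedding $W^{1,2}_0(B,\mu)\hookrightarrow W^{1,q}_0(B)$ for some $q>1$, coming from the reverse H\"older self-improvement of $\mu^{-1}\in A_2$ in Lemma \ref{doubling}, makes all integrals involved well-defined.

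For the gradient-difference bound, I would test both equations against the common admissible function $w-v$ and subtract to get
\[
\int_{B_{3r/4}} \wei{\tilde{\A}\nabla(w-v),\nabla(w-v)}\,dx = \int_{B_{3r/4}} \wei{(\A_0-\tilde{\A})\nabla v, \nabla(w-v)}\,dx.
\]
The left side dominates $\Lambda^{-1}\int_{B_{3r/4}} |\nabla(w-v)|^2\,d\mu$ by weighted ellipticity; the right side is controlled by Cauchy-Schwarz after the split $(\A_0-\tilde{\A}) = (\A_0-\tilde{\A})\mu^{-1/2}\cdot\mu^{1/2}$:
\[
\text{RHS} \leq \|\nabla v\|_{L^\infty(B_{3r/4})}\,\Big(\int_{B_r} |\A_0-\tilde{\A}|^2\mu^{-1}\,dx\Big)^{1/2}\,\Big(\int_{B_{3r/4}} |\nabla(w-v)|^2\,d\mu\Big)^{1/2}.
\]
The BMO-smallness hypothesis bounds the middle factor by $(\delta'\mu(B_r))^{1/2}$, and the $A_2$-doubling bound $\mu(B_r)\leq C(M_0)\mu(B_{r/2})$ from Lemma \ref{doubling} then yields $\fint_{B_{r/2}}|\nabla(w-v)|^2\,d\mu \leq C(\Lambda,M_0)\|\nabla v\|^2_{L^\infty(B_{3r/4})}\,\delta'$, which is $\leq \epsilon$ once $\delta'$ is chosen small in terms of $\epsilon$ and the (yet to be established) bound on $\|\nabla v\|_\infty$.

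\textbf{The main obstacle} is the $L^\infty$ gradient bound \eqref{L2-gradient-v-aprox}. Because $\A_0/\bar{\mu}$ is a standard uniformly elliptic constant matrix, classical interior regularity for $\textup{div}[\A_0\nabla v]=0$ yields the Moser-type estimate
\[
\|\nabla v\|_{L^\infty(B_{r/2})} \leq C(\Lambda,n)\Big(\fint_{B_{3r/4}} |\nabla v|^2\,dx\Big)^{1/2}
\]
in \emph{unweighted} Lebesgue measure, and the task is to pass from this to a bound depending only on the weighted normalization $\fint_{B_{7r/8}}|\nabla w|^2\,d\mu \leq C_0(\Lambda)$. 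My plan is: (i) test $\textup{div}[\A_0\nabla v]=0$ against $v-w$ to obtain the unweighted comparison $\int_{B_{3r/4}}|\nabla v|^2\,dx \leq C(\Lambda)\int_{B_{3r/4}}|\nabla w|^2\,dx$; (ii) invoke H\"older combined with the reverse H\"older self-improvement for the $A_2$ weights $\mu$ and $\mu^{-1}$ (Coifman-Fefferman, Lemma \ref{doubling}) to compare $\int|\nabla w|^2\,dx$ with $\bar{\mu}^{-1}(\int|\nabla w|^{2q}\,d\mu)^{1/q}$ for a suitable $q>1$; (iii) apply the weighted Meyers/Gehring-type higher integrability of $\nabla w$ for the equation $\textup{div}[\tilde{\A}\nabla w]=0$ (in the Fabes-Kenig-Serapioni framework \cite{Fabes}) to bound the latter by $\fint_{B_{7r/8}}|\nabla w|^2\,d\mu \leq C_0$. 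Chaining (i)--(iii) then gives $\|\nabla v\|_{L^\infty(B_{r/2})}\leq C(\Lambda,M_0,n)$, completing the lemma.
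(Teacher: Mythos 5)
The paper does not give a self-contained proof here: it simply cites \cite[Proposition~4.4]{CMP} ``with suitable scaling.'' So the comparison is against your blind reconstruction, and your skeleton (take $\A_0=\wei{\tA}_{B_r}$, test both equations, absorb the BMO smallness, then establish $\|\nabla v\|_\infty\leq C$ separately) is reasonable. The genuine difficulty, as you correctly identify, is the $L^\infty$ bound for $\nabla v$, and your proposed chain (i)--(iii) for it has a quantitative gap that I don't see how to close.

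Concretely, in step (ii) you want to pass from the unweighted $\fint_{B_{3r/4}}|\nabla w|^2\,dx$ to a weighted $L^{2q}(\mu)$ norm of $\nabla w$. Any Hölder split of the form $\fint |\nabla w|^2\,dx=\fint(|\nabla w|^2\mu^{1/q})\mu^{-1/q}\,dx$ forces the exponent on the second factor to be $q'/q=1/(q-1)$, and controlling $\fint \mu^{-1/(q-1)}\,dx$ requires either reverse H\"older for $\mu^{-1}\in A_2$ or the open-ended property $\mu\in A_{2-\eta_0}$; either way you are forced to take $q\ge 2-\eta_0$ for a small constant $\eta_0=\eta_0(M_0,n)>0$. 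This means step (ii) demands $\nabla w\in L^{2(2-\eta_0)}(\mu)$, which is close to $L^4(\mu)$ for general $A_2$ weights. But the weighted Meyers/Gehring gain in step (iii) delivers only $\nabla w\in L^{2+\sigma}(\mu)$ with $\sigma=\sigma(\Lambda,M_0,n)$ small. There is no reason for $2+\sigma$ to reach $2(2-\eta_0)$; both constants degenerate as $[\mu]_{A_2}$ grows. Indeed a one--dimensional example ($\mu(x)=|x|^\alpha$ with $\alpha\in(0,1)$, $|\nabla w|=|x|^{-\beta}$ with $\beta\in(1/2,(1+\alpha)/2)$) shows $\nabla w\in L^2(\mu)$ without $\nabla w\in L^2(dx)$, so the unweighted $L^2$ energy you are trying to reach in step (i) is not in general finite, and the whole reduction stalls there. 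A secondary issue: the Lax--Milgram set-up in $W^{1,2}_0(B_{3r/4},\mu)$ that you invoke for existence of $v$ does not apply, since the constant-coefficient bilinear form $\int\wei{\A_0\nabla u,\nabla u}\,dx$ controls only the unweighted $\|\nabla u\|_{L^2}^2$ and is not coercive on the $\mu$-weighted space; note the statement asks only for $v\in W^{1,2}_{\mathrm{loc}}(B_{3r/4})$ and does not prescribe boundary data, precisely because this matching is delicate.

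A route that does close (and is in the spirit of \cite{Fabes} and \cite{CMP}) bypasses the unweighted $L^2$ norm entirely. Use the De Giorgi--Nash--Moser theory for degenerate equations in \cite{Fabes}: $w$ is locally H\"older continuous, and together with the weighted Poincar\'e inequality one gets
\[
\operatorname*{osc}_{B_{13r/16}} w \leq C(\Lambda,M_0,n)\, r\Big(\fint_{B_{7r/8}}|\nabla w|^2\,d\mu\Big)^{1/2}.
\]
Take $v$ as the solution of $\operatorname{div}[\A_0\nabla v]=0$ in $B_{3r/4}$ with $v=w$ on $\partial B_{3r/4}$ (Perron, using that $w$ is continuous). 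The maximum principle gives $\operatorname*{osc}_{B_{3r/4}}v\le\operatorname*{osc}_{\partial B_{3r/4}}w$, and the interior gradient estimate for the uniformly elliptic constant-coefficient equation gives $\|\nabla v\|_{L^\infty(B_{r/2})}\le C(n,\Lambda)\,r^{-1}\operatorname*{osc}_{B_{3r/4}} v$. Chaining these yields $\|\nabla v\|_{L^\infty(B_{r/2})}\le C(\Lambda,M_0,n)\sqrt{C_0}$ with no unweighted energy of $w$ appearing anywhere. This also supplies a natural $v$, after which your absorbing argument for the weighted gradient-difference estimate goes through.
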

\begin{proof}  The proof is the same as that of \cite[Proposition 4.4]{CMP} with suitable scaling, we skip it.
\end{proof}
%=============
\subsubsection{Proof of Proposition \ref{interio-approx-lemma}}  We only need to glue Lemma \ref{step-1-comparision} and Lemma \ref{L2-gradient-aprox} together.  With given $\Lambda >0, M_0 \geq 1$ and $\epsilon >0$, let $\delta' = \delta'(\epsilon/2, \Lambda, M_0, n) >0$ be as in Lemma \ref{L2-gradient-aprox} and sufficiently small. Without loss of generality, we can assume that $\delta' \leq \epsilon/2$. Now, let $\delta =\delta(\epsilon, \Lambda, M_0, n): = \delta'/C_1(\Lambda, M_0,n) $, where $C_1(\Lambda, M_0,n) \geq 1$ is some constant which will be determined. Then, choose $\lambda_0 =\lambda_0 (\delta, M_1, \omega_0) = \lambda_0(\epsilon, \Lambda, M_0, M_1,\omega_0, n)$ sufficiently large such that
\begin{equation} \label{lambda-zero-choice}
\frac{M_1 K_\delta +\delta}{\lambda_0} \leq \delta,
\end{equation}
where $K_\delta$ is defined in Lemma \ref{step-1-comparision}. Now, assume that the assumptions in Proposition \ref{interio-approx-lemma} hold. Let $w$ be as in Lemma \ref{step-1-comparision}. Then, % by Lemma \ref{step-1-comparision-bd},
\[
\frac{1}{\mu(B_{7r/8})}\int_{B_{7r/8}} |\nabla w|^2 \mu(x) dx \leq C_0 (\Lambda) .
\]
Next, let $v$ be as in Lemma \ref{L2-gradient-aprox}.  It follows directly from Lemma \ref{L2-gradient-aprox} that
\[
\norm{\nabla v}_{L^\infty(B_{r/2})} \leq C(\Lambda, M_0, n), \quad 
\text{and} \quad \frac{1}{\mu(B_{r/2})}\int_{B_{r/2}} |\nabla w - \nabla v|^2 \mu(x) dx \leq \frac{\epsilon}{2}.
\]
From this and from Lemma \ref{step-1-comparision}, we obtain
\[
\begin{split}
& \frac{1}{\mu(B_{r/2})} \int_{B_{r/2}} |\nabla u - \nabla v|^2 \mu(x) dx \\
& \leq 
\frac{1}{\mu(B_{r/2})} \int_{B_{r/2}} |\nabla u - \nabla w|^2 \mu(x) dx + \frac{1}{\mu(B_{r/2})} \int_{B_{r/2}} |\nabla w - \nabla v|^2 \mu(x) dx \\
& \leq \frac{C(\Lambda)\mu(B_{7r/8})}{\mu(B_{r/2})} \left[\delta^2 \fint_{B_{7r/8}} |\nabla u|^2 d \mu(x)  + \fint_{B_{7r/8}} \Big| \frac{\F}{\mu} \Big|^2 d\mu(x)  + \frac{(M_1 K_\delta +\delta)^2}{\lambda^2} \right] + \frac{\epsilon}{2} . 
\end{split}.
\]
This together with \eqref{lambda-zero-choice} and the assumptions in the Proposition \ref{interio-approx-lemma} imply 
\[
\begin{split}
& \frac{1}{\mu(B_{r/2})}\int_{B_{r/2}} |\nabla u - \nabla v|^2 \mu(x) dx  \leq \frac{C(\Lambda) \mu(B_{7r/8})}{\mu(B_{r/2}}\delta^2 + \frac{\epsilon}{2}
\end{split}
\]
Moreover, observe that by the doubling property, there is $C(M_0, n) >0$ such that
\[
\frac{\mu(B_{7r/8})}{\mu(B_{r/2})} \leq C(M_0,n).
\]
Hence, there is $C_1(\Lambda, M_0, n) \geq 1$ such that
\[
\frac{1}{\mu(B_{r/2})}\int_{B_{r/2}} |\nabla u - \nabla v|^2 \mu(x) dx \leq C_1(\Lambda, M_0,n) \delta^2 + \frac{\epsilon }{2} .
\]
From this and the choice of $\delta$, we obtain
\[
\frac{1}{\mu(B_{r/2})}\int_{B_{r/2}} |\nabla u - \nabla v|^2 \mu(x) dx \leq \epsilon,
\]
and the proof is complete. 
%================
\subsection{Interior level set estimates} This section consists of several lemmas preparing for the proof of Theorem \ref{inter-theorem}.  For $\lambda >0$, we consider the equation with the scaling parameter $\lambda$
\begin{equation} \label{lambda-B2R-eqn}
\textup{div}[\A_\lambda(x, u, \nabla u)] = \textup{div}[\F],  \quad \text{in} \quad B_{2R}.
\end{equation}
%We need several estimates to prepare for the proof of the main result of the subsection, Proposition \ref{good-lambda-interior} below. 
We begin with the following lemma.
\begin{lemma} \label{density-est-interior-1} Let $\Lambda >0, M_0 \geq 1$ be given. There exists $N = N(\Lambda, M_0, n) >1$ such that the following statement holds: Let $M_1 >0, M_2 \geq 1$, $\K \subset \R$ be some open interval, and let $\omega_0 : \K \times [0, \infty) \rightarrow [0, \infty)$ be continuous satisfying \eqref{omega-limit} and $\norm{\omega_0}_{\infty} \leq M_1$. Then, for every sufficiently small $\epsilon >0$, there exist sufficiently small $\delta = \delta(\epsilon, \Lambda, M_0, M_2, n) \in (0,1/8) $ and a large number $\lambda_0 = \lambda_0(\epsilon, \Lambda, M_0, M_1, M_2, \omega_0, n)\geq 1$ such that if $\A \in \U_{B_{2R}, \K}(\Lambda, M_0, M_1, \omega_0)$ with its asymptotical matrix $\tA$ and weight function $\mu \in A_2$ satisfying
\begin{equation} \label{interior-B2R-BM0-smallness}
\sup_{0 < \rho \leq R_0}\sup_{x \in B_{R}} \frac{1}{\mu(B_\rho(x))} \int_{B_{\rho}(x)} \Big| \tA(y) - \wei{\tA}_{B_{\rho}(x)}\Big|^2 \mu^{-1}(y) dy \leq \delta, \quad \text{for some} \ R_0 \in (0, R],
\end{equation}
then, for every $\lambda \geq \lambda_0$, every $u \in W^{1,2}(B_{2R}, \mu)$ a weak solution of \eqref{lambda-B2R-eqn}, every $y \in B_R$, and every $0 < r \leq R_0/3$, if  
\begin{equation} \label{non-empty-iterior-intersection}
B_r(y) \cap  \Big\{ B_{R}: \M_{\mu, B_{2R}}(|\nabla u|^2 ) \leq 1\Big \} \cap \Big \{B_{R}: \M_{\mu, B_{2R}} (|\F/\mu|^2) \leq \delta \Big \} \not= \emptyset,
\end{equation}
then 
\[
\omega(\{x \in B_{R}: \M_{\mu,B_{2R}}(|\nabla u|^2) >N\} \cap B_r(y)) < \epsilon \omega(B_r(y)),
\]
for every weight function $\omega \in A_q$ with $[\omega]_{A_q} \leq M_2$, for $1 \leq q <\infty$.
\end{lemma}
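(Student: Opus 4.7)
The plan is to combine Proposition~\ref{interio-approx-lemma} with a standard Calder\'on--Zygmund level-set decomposition, and then to pass from $\mu$-smallness of the bad set to $\omega$-smallness through the two-weight comparison Lemma~\ref{compare-omega-mu}. \textbf{Localization.} Using \eqref{non-empty-iterior-intersection}, fix $x_0 \in B_r(y)$ with $\M_{\mu, B_{2R}}(|\nabla u|^2)(x_0) \leq 1$ and $\M_{\mu, B_{2R}}(|\F/\mu|^2)(x_0) \leq \delta$. For $x \in B_r(y)$ and $\rho > r/2$, the inclusion $B_\rho(x) \subset B_{5\rho}(x_0)$ together with $A_2$-doubling of $\mu$ yields
\[
\fint_{B_\rho(x)} |\nabla u|^2 \chi_{B_{2R}}\, d\mu \leq \frac{\mu(B_{5\rho}(x_0))}{\mu(B_\rho(x))}\, \M_{\mu, B_{2R}}(|\nabla u|^2)(x_0) \leq C_0(M_0, n).
\]
Hence if $N \geq 2C_0$, no point of the level set can realize its maximum on a ball of radius $\rho > r/2$; every such maximizing ball is contained in $B_{3r/2}(y)$.

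\textbf{Approximation.} Set $\rho^\ast = 3r$. Since $r \leq R_0/3$ and $R_0 \leq R$, we have $B_{\rho^\ast}(y) \subset B_{2R}$, and \eqref{interior-B2R-BM0-smallness} gives the BMO hypothesis of Proposition~\ref{interio-approx-lemma} on $B_{\rho^\ast}(y)$. Combining the bounds at $x_0$ with doubling produces $\fint_{B_{7\rho^\ast/8}(y)} |\nabla u|^2\, d\mu \leq C_3(M_0, n)$ and $\fint_{B_{7\rho^\ast/8}(y)} |\F/\mu|^2\, d\mu \leq C_3 \delta$. After the normalization $u \mapsto u/\sqrt{C_3}$, $\F \mapsto \F/\sqrt{C_3}$ (which preserves the structure of the equation by Remark~\ref{remark-1} with $\lambda \mapsto \lambda\sqrt{C_3}$), Proposition~\ref{interio-approx-lemma} applies: given $\epsilon_1 > 0$ to be fixed below, it produces $\delta = \delta(\epsilon_1, \Lambda, M_0, n)$, $\lambda_0 = \lambda_0(\epsilon_1, \Lambda, M_0, M_1, \omega_0, n)$, and a Lipschitz $v$ on $B_{3r/2}(y)$ with $\|\nabla v\|_{L^\infty(B_{3r/2}(y))} \leq C_2(\Lambda, M_0, n)$ and $\fint_{B_{3r/2}(y)} |\nabla u - \nabla v|^2\, d\mu \leq \epsilon_1$.

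\textbf{Level set bound and transfer to $\omega$.} Fix $N := \max\{2C_0, 8 C_2^2\}$, which depends only on $\Lambda, M_0, n$. For $x$ in the level set $E := \{x \in B_r(y): \M_{\mu, B_{2R}}(|\nabla u|^2)(x) > N\}$, the localization step together with $|\nabla u|^2 \leq 2|\nabla u - \nabla v|^2 + 2|\nabla v|^2$ forces $\M_\mu(|\nabla u - \nabla v|^2 \chi_{B_{3r/2}(y)})(x) > c(\Lambda, M_0, n) > 0$. The weak-$(1,1)$ boundedness of $\M_\mu$ on the doubling measure $\mu$ then gives $\mu(E) \leq C(\Lambda, M_0, n)\, \epsilon_1\, \mu(B_{3r/2}(y))$. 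Applying Lemma~\ref{compare-omega-mu} with $s = 2$ to $E \subset B_{3r/2}(y)$, followed by $A_q$-doubling to replace $\omega(B_{3r/2}(y))$ by $\omega(B_r(y))$, yields
\[
\omega(E) \leq C(\Lambda, M_0, M_2, q, n)\, \epsilon_1^{\beta/2}\, \omega(B_r(y)),
\]
for some $\beta = \beta(M_2, n) > 0$. Choosing $\epsilon_1$ so small that the prefactor is below $\epsilon$, and backtracking through Proposition~\ref{interio-approx-lemma} to fix $\delta$ and $\lambda_0$, completes the proof.

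The main bookkeeping obstacle is to keep $N$ independent of $\epsilon$, $M_1$, and $M_2$: the constants $C_0$ and $C_2$ entering $N$ must be determined by the doubling and approximation steps \emph{before} $\epsilon_1$ is chosen, so that the subsequent small-$\delta$ and large-$\lambda_0$ selections can absorb all remaining parameters. A secondary technical point is the rescaling in the approximation step, which must be compatible with Remark~\ref{remark-1} so that the bound $\lambda \geq \lambda_0$ from the lemma implies the corresponding bound required by Proposition~\ref{interio-approx-lemma}.
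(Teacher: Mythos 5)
Your proposal follows essentially the same route as the paper: use the nonempty intersection \eqref{non-empty-iterior-intersection} to locate $x_0 \in B_r(y)$ with the maximal function bounds, use $\mu$-doubling at $x_0$ to control the large-radius averages, rescale (consistently with Remark~\ref{remark-1}) to apply Proposition~\ref{interio-approx-lemma}, bound the bad level set via a weak-$(1,1)$ estimate in the $\mu$-measure, and transfer to the $\omega$-measure through Lemma~\ref{compare-omega-mu}. The one bookkeeping slip is in the last transfer step: you apply Lemma~\ref{compare-omega-mu} to $E \subset B_{3r/2}(y)$ and then invoke $A_q$-doubling of $\omega$ to replace $\omega(B_{3r/2}(y))$ by $\omega(B_r(y))$; Lemma~\ref{doubling}(i) gives $\omega(B_{3r/2}(y)) \leq M_2(3/2)^{nq}\,\omega(B_r(y))$, whose factor blows up as $q \to \infty$. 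This forces $\epsilon_1$, hence $\delta$ and $\lambda_0$, to depend on $q$, contradicting the lemma's assertion that $\delta=\delta(\epsilon,\Lambda,M_0,M_2,n)$ works uniformly in $1 \leq q < \infty$. The fix is trivial and is what the paper does: since in fact $E \subset B_r(y)$, first use $A_2$-doubling of $\mu$ (whose constant is $q$-free) to convert the weak-$(1,1)$ bound $\mu(E)\lesssim \epsilon_1 \mu(B_{3r/2}(y))$ into $\mu(E)\lesssim \epsilon_1 \mu(B_r(y))$, and then apply Lemma~\ref{compare-omega-mu} with $B = B_r(y)$, whose constant and exponent $\beta$ depend on $M_0, M_2, n$ but not on $q$.
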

\begin{proof} For a given sufficiently small $\epsilon >0$, choose $\gamma \in (0,1)$ sufficiently small, to be determined, and depending only on $\epsilon, \Lambda, M_0$ and $M_2$. Let $\delta = \delta(\gamma, \Lambda, M_0, n) \in (0,1/8)$ and $\lambda_0 = \lambda_0 (\gamma, M_0, M_1, M_2, \omega_0, n)$ be defined as in Proposition \ref{interio-approx-lemma}. By \eqref{non-empty-iterior-intersection}, there is $x_0 \in B_r(y)$ such that
\begin{equation} \label{x-zero-interior}
\M_{\mu, B_{2R}} (|\nabla u|^2)(x_0) \leq 1, \quad \text{and} \quad \M_{\mu,B_{2R}}(|\F/\mu|^2) (x_0) \leq \delta.
\end{equation}
Observe that $B_{3r}(y) \subset B_{2R}$, and hence $u$ is a weak solution of
\begin{equation} \label{lambda-B3r-eqn-density-1}
\text{div}[\A_\lambda(x,u, \nabla u)] = \text{div}[\F], \quad \text{in} \quad B_{3r}(y).
\end{equation}
Moreover, since $B_{21r/8}(y) \subset B_{29r/8} (x_0) \cap B_{2R}$ and Lemma \ref{doubling}, we see that
\[
\begin{split}
\fint_{B_{21r/8}(y)} |\nabla u|^2 d\mu(x) & \leq \frac{\mu(B_{29r/8}(x_0))}{\mu(B_{21r/8}(y))} \frac{1}{\mu(B_{29r/8}(x_0))}\int_{B_{29r/8}(x_0) \cap B_{2R}} |\nabla u|^2 d\mu(x) \leq M_0\Big(\frac{29}{21}\Big)^{2n},\\
\fint_{B_{21r/8}(y)} |\F/\mu|^2 d\mu(x) & \leq \frac{\mu(B_{29r/8}(x_0))}{\mu(B_{21r/8}(y))} \frac{1}{\mu(B_{29r/8}(x_0))}\int_{B_{29r/8}(x_0) \cap B_{2R}} |\F/\mu|^2 d\mu(x) \leq M_0\Big(\frac{29}{21}\Big)^{2n} \delta.
\end{split}
\]
Moreover, because $3r \leq R_0$, with the assumption \eqref{interior-B2R-BM0-smallness} of the lemma, we see that
\[
\frac{1}{\mu(B_{3r}(y))}\int_{B_{3r}(y)} \Big| \tA(x) -\wei{\tA}_{B_{3r}(y)}\Big|^2 \mu^{-1} (x) dx \leq \delta.
\]
Now, let $u'(x) = u(x)/\Big[M_0\Big(\frac{29}{21}\Big)^{2n}\Big]$, $\F'(x)= \F(x)/\Big[M_0\Big(\frac{29}{21}\Big)^{2n} \Big]$, and $\lambda' = \lambda M_0\Big(\frac{29}{21}\Big)^{2n} \geq \lambda_0$, we infer from \eqref{lambda-B3r-eqn-density-1} that $u'$ is a weak solution of
\[
\text{div}[\A_{\lambda'}(x, u', \nabla u'] = \text{div}[\F'(x)], \quad x \in B_{3r}(y).
\]
Therefore, by applying Proposition \ref{interio-approx-lemma} for $u', F'$ and scaling back to $u, F$, we see that there exists $v$ such that
\begin{equation} \label{v-interior-compare-density}
\fint_{B_{3r/2}(y)}|\nabla u - \nabla v|^2 d\mu(x) \leq \gamma M_0\Big(\frac{29}{21}\Big)^{2n}, \quad 
\norm{\nabla v}_{L^{\infty}(B_{3r/2}(y))} \leq C_*(\Lambda, M_0, n).
\end{equation}
Now, let
\[
N = \max\{4C_*(\Lambda, M_0,n), 5^{2n} M_0 \}
\]
We claim that
\begin{equation} \label{set-est-interior}
\Big\{B_r(y): \M_{\mu, B_{3r/2}(y)}(|\nabla u - \nabla v|^2)  \leq C_*\Big\} \subset \Big\{ B_r(y): \M_{\mu, B_{2R}}(|\nabla u|^2) \leq N\Big\}.
\end{equation}
Indeed, let $x$ to be any point in the set on the left hand side of \eqref{set-est-interior}. We only need to show that
\begin{equation} \label{intereior-N-est}
\M_{\mu, B_{2R}}(|\nabla u|^2)(x) \leq N.
\end{equation}
Consider the ball $B_\rho(x)$. If $\rho \leq r/2$, we see that $B_{\rho}(x) \subset B_{3r/2}(y) \subset B_{2R}$. From this,  it follows
\[
\begin{split}
\frac{1}{\mu(B_\rho(x))} \int_{B_\rho(x)} |\nabla u|^2 \mu(x) dx & \leq 2 \left[ \frac{1}{\mu(B_\rho(x))} \int_{B_\rho(x)} |\nabla u - \nabla v|^2 \mu(x) dx + \frac{1}{\mu(B_\rho(x))} \int_{B_\rho(x)}|\nabla v|^2 \mu(x) dx \right] \\
& \leq 2\Big[\M_{\mu, B_{3r/2}(y)}(|\nabla u - \nabla v|^2)(x)  + \norm{\nabla v}_{L^\infty(B_{3r/2}(y)} \Big]\\
& \leq 2[C_* + C_*] = 4C_* \leq N.
\end{split}
\]
On the other hand, if $ \rho > r/2$, we see that $B_\rho(x) \subset B_{5\rho}(x_0)$. Hence, by applying Lemma \ref{doubling} -(i), and using \eqref{x-zero-interior}, we obtain
\[
\frac{1}{\mu(B_\rho(x))} \int_{B_\rho(x) \cap B_{2R}} |\nabla u|^2 \mu(x) dx \leq \frac{\mu(B_{5\rho}(x_0))}{\mu(B_\rho(x))}\frac{1}{\mu(B_{5\rho}(x_0))}\int_{B_{5\rho}(x_0) \cap B_{2R}}|\nabla u|^2 \mu(x) dx \leq M_05^{2n} \leq N.
\]
We therefore have proved \eqref{intereior-N-est}, which in turns proves \eqref{set-est-interior}. Observe that \eqref{set-est-interior} is equivalent to
\begin{equation} \label{compare-set-interior}
\Big\{B_r(y): \M_{\mu, B_{2R}}(|\nabla u|^2) > N\Big\} \subset E: = \Big\{B_r(y): \M_{\mu, B_{3r/2}(y)}(|\nabla u-\nabla v|^2) > C_*\Big\}.
\end{equation}
On one hand, by the weak type (1,1) estimate, the doubling property of $\mu$ in Lemma \ref{doubling}, and \eqref{v-interior-compare-density}, we see that
\[
\mu(E) \leq \frac{C(M_0) \mu(B_{3r/2}(y))}{C_*} \fint_{B_{3r/2}(y)} |\nabla u - \nabla v|^2 d\mu(x) \leq C'(\Lambda, M_0, n) \mu(B_r(y))\gamma.
\]
In other words,
\[
\frac{\mu(E)}{\mu(B_r(y))} \leq C'(\Lambda, M_0, n)\gamma.
\]
%
%On the other hand, since $[\omega]_{A_p} \leq M_2$, it follows from Lemma \ref{doubling} that there  is $\beta >0$ depending only on $M_2$ such that
%\[
%\frac{\omega(E)}{\omega(B_r(y))} \leq C(M_2) \left(\frac{|E|}{|B_r(y)|}\right)^\beta.
%\]
%Semilarly, since $\mu \in A_2$, we see that
%\[
%\left(\frac{|E|}{|B_r(y)|}\right)^2 \leq M_0 \frac{\mu(E)}{\mu(B_r(y))}.
%\]
%By combining these last two estimates with \eqref{mu-E-interior}, we obtain
This and Lemma \ref{compare-omega-mu} imply
\[
\frac{\omega(E)}{\omega(B_r(y))} \leq C(M_0, M_2)\left( \frac{\mu(E)}{\mu(B_r(y))}\right)^{\beta/2} \leq C^*(\Lambda, M_0, M_2,n) \gamma^{\beta/2},
\]
for some constant $\beta = \beta(M_2, n)>0$. 
Hence, it follows from this and \eqref{compare-set-interior} that
\[
\begin{split}
\omega\Big(\Big\{B_r(y): \M_{\mu, B_{2R}}(|\nabla u|^2) > N\Big\} \Big) & \leq \omega(E) \leq C^*(\Lambda, M_0, M_2, n) \gamma^{\beta/2} \omega(B_r(y)).
\end{split}
\]
Now, if we choose $\gamma$ such that $\gamma^{\beta/2} C^*(\Lambda, M_0, M_2, n) = \epsilon$, the lemma follows.
\end{proof}
\noindent
We now can estimate a level set of $\M_{\mu, B_{2R}}(|\nabla u|^2)$. This is the main result in this subsection.
%==============
\begin{proposition} \label{good-lambda-interior} Let $\Lambda>0$, $M_k$, and $\omega_0, N$ be as in Lemma \ref{density-est-interior-1}, with $k = 1,2,3$. For any $\epsilon >0$, let $\delta \in (0,1/8)$ and $\lambda_0 \geq 1$ be defined as in Lemma \ref{density-est-interior-1}. Suppose that  $\A \in \U_{B_{2R}, \K}(\Lambda, M_0, M_1, \omega_0)$ with its asymptotical matrix $\tA$
and weight $\mu$ satisfying
\[
\sup_{0 < \rho \leq R_0}\sup_{x \in B_R} \frac{1}{\mu(B_\rho(x))} \int_{B_{\rho}(x)} \Big| \tA(y) - \wei{\tA}_{B_{\rho}(x)}\Big|^2 \mu^{-1}(y) dy \leq \delta, \quad \text{for some} \quad R_0 \in (0, R].
\]
Then, for every $\lambda \geq \lambda_0$ for $\omega \in A_q$ with $[\omega]_{A_q} \leq M_2, 1 < q < \infty$ and for every $u \in W^{1,2}(B_{2R}, \mu)$ a weak solution of \eqref{lambda-B2R-eqn}, if 
\[
\omega\Big(\Big\{B_R: \M_{\mu,B_{2R}}(|\nabla u|^2) > N \Big\} \Big) \leq \epsilon \omega(B_{R_0/6}(y)), \quad \forall \ y \in B_R,
\]
then for $\epsilon_1 = (20)^{qn} M_2^2\epsilon$,
\[
\begin{split}
& \omega\Big(\Big\{B_R: \M_{\mu, B_{2R}}(|\nabla u|^2) > N \Big\} \Big)  \leq \epsilon_1 \left[ \omega\Big(\Big\{B_R: \M_{\mu, B_{2R}}(|\nabla u|^2) > 1 \Big\} \Big) + \omega\Big(\Big\{B_R: \M_{\mu, B_{2R}}(|\F/\mu|^2) > \delta \Big\} \Big) \right].
\end{split}
\]
\end{proposition}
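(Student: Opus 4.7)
\medskip

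\noindent\textbf{Proof proposal for Proposition \ref{good-lambda-interior}.}
The plan is to deduce the good-$\lambda$ inequality from a Vitali covering/exit-time argument, using Lemma \ref{density-est-interior-1} to verify the density hypothesis and Lemma \ref{Vitali-ball} to conclude. Define
\[
C := \Big\{x\in B_R : \M_{\mu, B_{2R}}(|\nabla u|^2)(x) > N\Big\},
\]
\[
D := \Big\{x\in B_R : \M_{\mu, B_{2R}}(|\nabla u|^2)(x) > 1\Big\} \cup \Big\{x\in B_R : \M_{\mu, B_{2R}}(|\F/\mu|^2)(x) > \delta\Big\}.
\]
Since $N > 1$ (as guaranteed by Lemma \ref{density-est-interior-1}), one has $C \subset D \subset B_R$. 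The aim is to apply Lemma \ref{Vitali-ball} with the choice $r_0 := R_0/6$; note that $R_0 \leq R$ forces $r_0 < R/2$ as required.

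I would first verify hypothesis (i) of Lemma \ref{Vitali-ball}: the standing assumption of the proposition states exactly that
\[
\omega(C) \leq \epsilon\, \omega(B_{R_0/6}(y)) = \epsilon\,\omega(B_{r_0}(y)) \qquad \text{for every } y\in B_R,
\]
so condition (i) holds (even with a pointwise inequality for every $y$, not just almost every $y$).

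The main step is verifying hypothesis (ii) of Lemma \ref{Vitali-ball}, which I would prove by contrapositive. Fix $x\in B_R$ and $\rho\in(0, r_0)$, and suppose $B_\rho(x)\cap B_R \not\subset D$. Then there exists a point
\[
z \in B_\rho(x) \cap B_R \cap \Big\{\M_{\mu,B_{2R}}(|\nabla u|^2) \leq 1\Big\} \cap \Big\{\M_{\mu,B_{2R}}(|\F/\mu|^2)\leq \delta\Big\},
\]
so in particular the intersection appearing in \eqref{non-empty-iterior-intersection} is nonempty for the ball $B_\rho(x)$. Because $\rho < r_0 = R_0/6 < R_0/3$, Lemma \ref{density-est-interior-1} applies directly (with the same $\delta$, $\lambda_0$, $N$) and yields
\[
\omega\big(C \cap B_\rho(x)\big) = \omega\Big(\{B_R:\M_{\mu,B_{2R}}(|\nabla u|^2) > N\}\cap B_\rho(x)\Big) < \epsilon\,\omega(B_\rho(x)),
\]
which is the contrapositive of (ii).

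With both hypotheses in place, Lemma \ref{Vitali-ball} gives $\omega(C) \leq \epsilon'\,\omega(D)$ with $\epsilon' = \epsilon\cdot 20^{nq}[\omega]_{A_q}^2 \leq \epsilon\cdot 20^{nq} M_2^2 = \epsilon_1$. Splitting $\omega(D)$ by subadditivity,
\[
\omega(D) \leq \omega\big(\{\M_{\mu,B_{2R}}(|\nabla u|^2) > 1\}\big) + \omega\big(\{\M_{\mu,B_{2R}}(|\F/\mu|^2) > \delta\}\big),
\]
produces the desired inequality. The only substantive work is the density step verifying (ii); the bookkeeping (choice of $r_0$, nesting $C \subset D$, and the constant $\epsilon_1$) is routine once Lemma \ref{density-est-interior-1} and Lemma \ref{Vitali-ball} are lined up.
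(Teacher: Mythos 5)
Your proposal is correct and follows essentially the same route as the paper: you define the same sets $C$ and $D$, invoke Lemma \ref{Vitali-ball} with $r_0 = R_0/6$, and verify hypothesis (ii) by passing Lemma \ref{density-est-interior-1} through the contrapositive. The only difference is that you spell out the contrapositive step (producing the point $z$ realizing \eqref{non-empty-iterior-intersection}) more explicitly than the paper does, which is a harmless expansion of the same argument.
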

\begin{proof} Let us denote
\[
C = \Big\{B_R: \M_{\mu, B_{2R}}(|\nabla u|^2) > N \Big\},
\]
and
\[
D= \Big\{ B_R: \M_{\mu, B_{2R}}(|\nabla u|^2) >1 \Big\} \cup \Big\{ B_R: \M_{\mu, B_{R}}(|\F/\mu|^2) > \delta \Big\}
\]
Clearly, $C\subset D \subset B_R$. Our goal is to apply the modified Vitali's covering, Lemma \ref{Vitali-ball}, with $r_0 = R_0/6$. Clearly by the assumption of the lemma, $\omega(C) \leq \epsilon \omega(B_{r_0}(y))$ forall $y \in B_R$. Moreover, for some $\rho\in (0, r_0)$ and $y \in B_R$, 
if $\omega(C \cap B_\rho(y)) \geq \epsilon \omega (B_\rho(y))$, then it follows from Lemma \ref{density-est-interior-1} that
\[
B_\rho(y) \cap B_R \subset D.
\]
This verifies all conditions in the modified Vitali's covering lemma, Lemma \ref{Vitali-ball}. Therefore, Lemma \ref{good-lambda-interior} follows. % directly from Lemma \ref{Vitali}.
\end{proof}
%============
\subsection{Proof of Theorem \ref{inter-theorem}} By iterating Proposition \ref{good-lambda-interior}, we obtain the following result.
%=========
\begin{lemma} \label{iteration-interior-lemma} Let $\Lambda>0$, $M_k$, and $\omega_0, N$ be as in Lemma \ref{density-est-interior-1}, with $k = 1,2,3$. For any $\epsilon >0$, let $\delta \in (0,1/8), \lambda_0 \geq 1$ be defined as in Lemma \ref{density-est-interior-1}. Suppose that  $\A \in \U_{B_{2R}, \K}(\Lambda, M_0, M_1, \omega_0)$ with its asymptotical matrix $\tA$
and weight $\mu$ satisfying
\[
\sup_{0 < \rho \leq R_0}\sup_{x \in B_R} \frac{1}{\mu(B_\rho(x))} \int_{B_{\rho}(x)} \Big| \tA(y) - \wei{\tA}_{B_{\rho}(x)}\Big|^2 \mu^{-1}(y) dy \leq \delta, \quad \text{for some} \quad R_0 \in (0, R].
\]
Then, for every $\lambda \geq \lambda_0$ for $\omega \in A_q$ with $[\omega]_{A_q} \leq M_2$ with $1 < q < \infty$, and for every $u \in W^{1,2}(B_{2R}, \mu)$ a weak solution of \eqref{lambda-B2R-eqn}, if 
\[
\omega\Big(\Big\{B_R: \M_{\mu,B_{2R}} (|\nabla u|^2) > N \Big\} \Big) \leq \epsilon \omega(B_{R_0/6}(y)), \quad \forall \ y \in B_R,
\]
then for $\epsilon_1 = (20)^{qn} M_2^2\epsilon$,
\[
\begin{split}
 \omega\Big(\Big\{B_R: \M_{\mu, B_{2R}} (|\nabla u|^2) > N^k \Big\} \Big) & \leq \epsilon_1^k  \omega\Big(\Big\{B_R: \M_{\mu, B_{2R}}(|\nabla u|^2) > 1 \Big\} \Big) \\
 & \quad +  \sum_{i=1}^k \epsilon_1^i \omega\Big(\Big\{B_R: \M_{\mu, B_{2R}}(|\F/\mu|^2) > \delta N^{k-i} \Big\} \Big) .
\end{split}
\]
\end{lemma}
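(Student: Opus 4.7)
The plan is to argue by induction on $k \geq 1$. The base case $k=1$ is precisely Proposition \ref{good-lambda-interior}, so no additional work is needed there. The inductive step exploits the scaling structure recorded in Remark \ref{remark-1}.

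For the passage from $k$ to $k+1$, I would introduce the rescaled quantities
\[
u_k(x) = N^{-k/2} u(x), \qquad \F_k(x) = N^{-k/2} \F(x), \qquad \lambda_k = N^{k/2} \lambda,
\]
and verify, via the pointwise identity $\A_\lambda(x,z,\eta)/N^{k/2} = \A_{\lambda_k}(x, z/N^{k/2}, \eta/N^{k/2})$ coming directly from Remark \ref{remark-1}, that $u_k \in W^{1,2}(B_{2R}, \mu)$ is a weak solution of $\textup{div}[\A_{\lambda_k}(x, u_k, \nabla u_k)] = \textup{div}[\F_k]$ in $B_{2R}$ with $u_k(x) \in \K_{\lambda_k}$ a.e. Since the underlying vector field $\A$, the asymptotic matrix $\tA$, the weight $\mu$, the BMO smallness bound \eqref{interior-B2R-BM0-smallness}, and the structural constants $\Lambda, M_0, M_1, \omega_0$ are all unchanged, and since $\lambda_k \geq \lambda \geq \lambda_0$, the rescaled problem falls within the scope of Proposition \ref{good-lambda-interior}.

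The smallness hypothesis of Proposition \ref{good-lambda-interior} for $u_k$ is inherited from the hypothesis for $u$ because $\M_{\mu, B_{2R}}(|\nabla u_k|^2) = N^{-k}\, \M_{\mu, B_{2R}}(|\nabla u|^2)$, which gives
\[
\{B_R : \M_{\mu,B_{2R}}(|\nabla u_k|^2) > N\} = \{B_R : \M_{\mu,B_{2R}}(|\nabla u|^2) > N^{k+1}\} \subset \{B_R : \M_{\mu,B_{2R}}(|\nabla u|^2) > N\},
\]
whose $\omega$-measure is bounded by $\epsilon\,\omega(B_{R_0/6}(y))$ by the standing hypothesis of the lemma (using $N \geq 1$). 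Applying Proposition \ref{good-lambda-interior} to $u_k$ and translating back in terms of $u$ and $\F$ then yields
\[
\omega\Big(\{B_R : \M_{\mu,B_{2R}}(|\nabla u|^2) > N^{k+1}\}\Big) \leq \epsilon_1 \Big[\omega\Big(\{B_R : \M_{\mu,B_{2R}}(|\nabla u|^2) > N^{k}\}\Big) + \omega\Big(\{B_R : \M_{\mu,B_{2R}}(|\F/\mu|^2) > \delta N^{k}\}\Big)\Big].
\]

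Plugging the inductive hypothesis for $k$ into the first term on the right and shifting the summation index ($i \mapsto i+1$, with the leftover $|\F/\mu|^2$ contribution filling the new $i=1$ slot) produces the claimed inequality for $k+1$ and completes the induction. I expect the only real obstacle to be the careful bookkeeping attached to the scaling step: one must check that the rescaled $u_k$ genuinely satisfies an equation in exactly the same class $\U_{B_{2R},\K}(\Lambda, M_0, M_1, \omega_0)$ (with a new scaling parameter $\lambda_k$ that still exceeds $\lambda_0$), and that the thresholds $N$, $\delta$ and the constant $\epsilon_1$ produced by Proposition \ref{good-lambda-interior} are truly independent of the level $k$ so that iteration is legitimate. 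No new analytic input beyond Proposition \ref{good-lambda-interior} is needed.
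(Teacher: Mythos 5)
Your argument is correct and follows essentially the same path as the paper's proof of the boundary analogue, Lemma \ref{global-iterating-lemma}, which the paper explicitly says carries over to this interior case: both rely on the scaling $u\mapsto u/N^{j/2}$ (made legitimate by Remark \ref{remark-1} and the fact that the rescaled $\lambda_j = N^{j/2}\lambda\ge\lambda_0$) together with Proposition \ref{good-lambda-interior} and an induction on $k$. The only cosmetic difference is in the bookkeeping: you rescale by $N^{k/2}$ in one shot to extract a clean one-step recursion and then iterate, whereas the paper rescales by $\sqrt{N}$, invokes the inductive hypothesis on $u'=u/\sqrt{N}$, and folds in the base case afterward — the computations are equivalent.
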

\begin{proof} We skip the proof because it is the same as that of Lemma \ref{global-iterating-lemma} below.
\end{proof} \noindent
After the accomplishment of Lemma \ref{iteration-interior-lemma}, the rest of the proof of Theorem \ref{inter-theorem} is similar to that of Theorem \ref{main-theorem}  in the next section.  We therefore skip it.
%===============
\section{Local boundary $W^{1,p}$-regularity theory and global $W^{1,p}$-regularity theory} \label{global-regularity-section}
To establish the global regularity estimates, we need both interior estimates and estimates up to the boundary. The interior theory are accomplished in previous sections. We now need to create the same theory up to the boundary.
\subsection{Boundary approximation estimates} \label{boundary-global-section} For $r >0$ and for $x_0~=~(x^{0}_1, x^{0}_2, \cdots, x^{0}_n)~\in~\mathbb{R}^n$, let us denote
\[
\begin{split}
B_r^+(x_0)  = \{ y = (y_1, y_2, \cdots, y_n) \in B_r(x_0):\ y_n >x^0_{n}\}, \quad B_r^+ = B_r^+(0), \\
T_r (x_0) = \{ x =  (x_1, x_2, \cdots, x_n) \in \partial B_{r}^+(0): \ x_n =x_{n}^{0}\}, \quad T_r = T_r(0).
\end{split}
\]
For given $x_0 \in \mathbb{R}^n$ and $\Omega \subset \R^n$, we also denote
\[
\Omega_r(x_0) = \Omega \cap B_r(x_0), \quad \partial_{w} \Omega_r(x_0) = \partial \Omega \cap B_r(x_0), \quad \Omega_r = \Omega_r(0).
\]
For your convenience, let us mention that the $(\delta, R_0)$-Reifenberg flat domains are defined in Definition \ref{Reifenberg-flatness}. In this section, for $r >0, \lambda >0$, and $\A \in \U_{\Omega_r, \K}(\Lambda, M_0, M_1, \omega_0)$, with some given $\omega_0$, we study the problem
\begin{equation} \label{boundary-eqn-r}
\left\{
\begin{array}{cccl}
\text{div}[\A_\lambda(x, u, \nabla u)]  & = & \text{div}[{\bf F}] & \quad \text{in} \quad \ \Omega_r, \\
u & =& g & \quad \text{on} \quad \partial_{w} \Omega_r.
\end{array}
\right.
\end{equation}
\noindent
The main result of the subsection is the following Proposition
\begin{proposition} \label{boundary-approximation-lemma} Let $\Lambda >0, M_0 \geq 1$, and $M_1 >0$.  Then, for every sufficiently $\epsilon >0$, there are $\delta= \delta(\epsilon, \Lambda, M_0, n) >0$ sufficiently small and $\lambda_0 = \lambda_0(\epsilon, \Lambda, M_0, M_1, \omega_0, n)~\geq ~1$ such that the following statement holds true: Assume that $\A \in \U_{\Omega_{r}, \K}(\Lambda, M_0, M_1, \omega_0)$, for some $r>0$, some open interval $\K \subset \mathbb{R}$, and with its corresponding asymptotical matrix $\tA$, and weight $\mu \in A_2$. Assume also that 
\[
B_r^+ \subset \Omega_r \subset B_r \cap \{ x_n > -4r \delta \}, \quad \frac{1}{\mu(B_r)}\int_{\Omega_{r}} | \tA(x) - \wei{\tA}_{\Omega_r}|^2 \mu^{-1}(x) dx \leq 
\delta,
\]
and
\[
\frac{1}{\mu(B_{7r/8})} \int_{\Omega_{7r/8}} \Big| \frac{\F}{\mu}\Big|^2 \mu(x) dx \leq \delta, \quad \frac{1}{\mu(B_{7r/8})} \int_{\Omega_{7r/8}}| \nabla g|^2 \mu(x) dx \leq \delta,
\]
and if $\lambda \geq \lambda_0$, and $u \in W^{1,2}(\Omega_r,\mu)$ is a weak solution of \eqref{boundary-eqn-r}
satisfying
 \[
 \frac{1}{\mu(B_{7r/8})} \int_{\Omega_{7r/8}} |\nabla u|^2 \mu(x) \leq 1,
\]
then, there is a function $v$ defined in $\Omega_{3r/4}$ and a constant $C = C(\Lambda, M_0, n)$ such that
\[
\frac{1}{\mu(B_{r/2})}\int_{\Omega_{r/2}} |\nabla u - \nabla v|^2 \mu(x) dx \leq \epsilon , \quad \text{and} \quad 
\norm{\nabla v}_{L^\infty(\Omega_{r/2})} \leq C.
\]
\end{proposition}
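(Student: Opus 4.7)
My plan is to mirror the two-step approximation of Proposition \ref{interio-approx-lemma} in the boundary setting. In Step 1, $u$ is approximated on $\Omega_{7r/8}$ by a solution $w$ of the frozen linear equation $\textup{div}[\tA \nabla w] = 0$ carrying the same Dirichlet trace as $u$; in Step 2, $w$ is approximated on $\Omega_{r/2}$ by a solution $v$ of a constant-coefficient equation on a flat half-ball. The two new features compared with the interior proof are the Reifenberg-flat geometry of $\Omega_r$ and the nontrivial boundary trace $g$ on $\partial_w \Omega_r$.

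For Step 1, I would define $w \in W^{1,2}(\Omega_{7r/8}, \mu)$ as the unique weak solution of $\textup{div}[\tA(x)\nabla w] = 0$ in $\Omega_{7r/8}$ with $w - u \in W^{1,2}_0(\Omega_{7r/8}, \mu)$; existence and uniqueness follow from Lax-Milgram together with the weighted Poincar\'e inequality for $A_2$ weights on Reifenberg flat domains established in \cite{Fabes}. Since $u = g$ in the trace sense on $\partial_w \Omega_{7r/8}$, $w$ inherits the same trace $g$ there, and $u - w \in W^{1,2}_0(\Omega_{7r/8}, \mu)$ is therefore admissible as a test function in both \eqref{boundary-eqn-r} and the equation for $w$. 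Subtracting the resulting identities and applying the asymptotic Uhlenbeck bound \eqref{lambda-asymp-Uh} together with H\"older's and Young's inequalities---exactly as in the proof of Lemma \ref{step-1-comparision}---would yield
\[
\fint_{\Omega_{7r/8}}|\nabla w|^2 \, d\mu \leq C(\Lambda), \qquad \fint_{\Omega_{7r/8}}|\nabla(u-w)|^2 \, d\mu \leq C(\Lambda)\Big[\delta^2 + \delta + \lambda^{-2}(M_1 K_\delta + \delta)^2\Big].
\]
Notably, the hypothesis $\fint|\nabla g|^2 d\mu \leq \delta$ does not enter this step; it will be used only in Step 2.

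For Step 2, I would carry out the boundary analogue of Lemma \ref{L2-gradient-aprox}. Using the weighted BMO smallness of $\tA$ and the Reifenberg inclusion $B_r^+ \subset \Omega_r \subset B_r \cap \{x_n > -4r\delta\}$, I would construct a constant symmetric matrix $\A_0$ with $|\langle\tA\rangle_{\Omega_r} - \A_0| \leq \epsilon\,\mu(B_r)/|B_r|$ and a weak solution $v$ of $\textup{div}[\A_0 \nabla v] = 0$ on $B_{3r/4}^+$ with zero trace on the flat base $\{x_n = 0\}$; this is achieved by first subtracting from $w$ a suitable Sobolev extension $\bar g$ of $g$ (which is where the smallness of $\fint|\nabla g|^2 d\mu$ is consumed), then comparing the corrected function with $v$. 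Classical boundary Lipschitz regularity on the flat half-ball gives $\|\nabla v\|_{L^\infty(B_{r/2}^+)} \leq C(\Lambda, M_0, n)$, and a perturbation argument analogous to \cite[Prop.\ 4.4]{CMP} combined with the zero trace on $\{x_n=0\}$ yields $\fint_{B_{r/2}^+}|\nabla w - \nabla v|^2 d\mu \leq \epsilon/2$. Gluing Steps~1 and 2 with an appropriate choice of $\delta$ small and $\lambda_0$ large---exactly as at the end of the proof of Proposition \ref{interio-approx-lemma}---delivers the claim on $B_{r/2}^+$, and $v$ is extended to the thin strip $\Omega_{r/2}\setminus B_{r/2}^+$ by reflection across $\{x_n = 0\}$.

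The main obstacle I anticipate is controlling the $\mu$-weighted measure of the boundary strip $\Omega_r \setminus B_r^+ \subset \{-4r\delta < x_n < 0\}$, whose Lebesgue measure is $O(r^n \delta)$. Via Lemma \ref{doubling}(ii) this transfers to a $\mu$-measure bound only once $\delta$ is chosen sufficiently small compared to the $A_2$ constant $M_0$, and this quantitative trade-off is exactly what forces the Reifenberg threshold in the statement. A secondary technical point is the construction of the extension $\bar g$ compatible with the Reifenberg geometry and with the weight $\mu$, for which the unweighted techniques of \cite{MP} must be adapted to the $A_2$-weighted setting as in \cite{CMP}. Both ingredients are present in the literature; the work is in combining them so that the Uhlenbeck, BMO, and Reifenberg errors can be absorbed simultaneously into a single $\epsilon$.
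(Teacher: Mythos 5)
Your two-step scheme matches the paper's proof almost exactly: Step 1 is Lemma \ref{step-1-comparision-bd} (take $w$ with $w-u\in W^{1,2}_0(\Omega_{7r/8},\mu)$, test with $u-w$, run the Uhlenbeck/H\"older/Young chain as in the interior Lemma \ref{step-1-comparision}), and Step 2 is Lemma \ref{L2-boundary-gradient-aprox} followed by the same gluing as in Proposition \ref{interio-approx-lemma}. The one small overcomplication is your talk of a ``Sobolev extension $\bar g$ of $g$'': in this paper $g$ is already a function in $W^{1,2}(\Omega,\mu)$, so the paper simply sets $w'=(w-g)/C_0$ and $\F'=\tA\nabla g'$, giving an inhomogeneous linear problem for $w'$ with zero trace on $\partial_w\Omega_{7r/8}$, then appeals to \cite[Proposition 5.5]{CMP} by scaling --- no extension is needed, and the weighted-strip and reflection issues you flag are absorbed inside that cited result rather than handled ad hoc.
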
 \noindent
The remaining of the section is to prove Proposition \ref{boundary-approximation-lemma}. The proof is divided into two steps of approximations, Lemma \ref{step-1-comparision-bd} and Lemma \ref{L2-boundary-gradient-aprox} below. 
% From now on, let $x_0 \in \tilde{\Omega}$ and $r >0$ be fixed as in Proposition \ref{boundary-approximation-lemma}. Since Proposition \ref{boundary-approximation-lemma} is invariant under the change of coordinates (rotation and translation), we assume for simplicity that $x_0 = 0 \in \Omega$ and
%\begin{equation}  \label{flatness-r}
%B_{r}^{+} (0) \subset \Omega_{r}(0) \subset B_{r}(0) \cap \big \{(y', y_{n} : y_{n} > -2 \delta'  r)\big \}, \quad \delta' = \frac{\delta}{1-\delta}.
%\end{equation} 
%We also write $\Omega_r = \Omega_r(0)$.
\subsubsection{Step 1: First approximation} We write $\theta = \frac{7r}{8}$. We first approximate the solution $u$ of \eqref{boundary-eqn-r} by and the corresponding homogeneous equation 
\begin{equation} \label{boundary-w-eqn}
\left\{
\begin{array}{cccl}
 \text{div}[\tilde{\A}(x) \nabla w ] &= &0 & \quad \text{in} \ \Omega_\theta,  \\ 
 w & = & u, & \quad \text{on} \ \partial \Omega_\theta\\ 
%w &= & 0  & \quad \text{on $T_\theta$,}
\end{array}
\right.
\end{equation}
We now state our definition of weak solutions for \eqref{boundary-eqn-r}, meanwhile weak solutions for \eqref{boundary-w-eqn} are understood by Definition \ref{weak-sol-def}.
\begin{definition} For $ g \in W^{1,2}(\Omega_r, \mu)$, and $\F \in L^2(\Omega_r, \mu^{-1})$, a function $u\in W^{1, 2}(\Omega_r, \mu)$ is a weak solution to \eqref{boundary-eqn-r} if $u -g \in \Wo^{1,2}(\Omega_r, \mu)$, $u(x) \in \mathbb{K}_\lambda: = \mathbb{K}/\lambda$ for a.e. $x \in \Omega_r$, and 
\[
\int_{\Omega_{r}}\langle\mathbb{A}_\lambda(x, u, \nabla u), \nabla \varphi \rangle dx  = \int_{\Omega_{r}} \langle {\bf F},\nabla \varphi\rangle dx,\quad \forall \varphi\in C^\infty_0(\Omega_{r}),
\]
where $\Wo^{1,2}(\Omega_r, \mu)$ denotes all functions $f \in W^{1,2}(\Omega_r, \mu)$ so that its zero extension to $B_r$ are in $W^{1,2}(B_r, \mu)$.
%\textup{(ii)} For $u \in W^{1,2}(\Omega_r, \mu)$, and $0 < \theta <r$, a function $w\in W^{1, 2}(\Omega_{\theta}, \mu)$ is a weak solution to \eqref{boundary-w-eqn}  if  $w - u \in W^{1,2}_0(\Omega_\theta, \mu)$ and 
%\[
%\int_{\Omega_{\theta}} \langle\tilde{\A}\nabla w, \nabla \varphi \rangle dx  = 0,\quad \forall \varphi \in  C_{0}^{\infty}(\Omega_{\theta}).
% \]
 %and $v$'s zero extension to $B_r$ is also in $W^{1, q}(B_{r})$. 
\end{definition} 
\begin{remark} \label{existence-remark} Given $u \in W^{1,2}(\Omega_r, \mu)$, it can be shown that $w$ is a  weak solution of \eqref{boundary-w-eqn} if and only if $\tilde{w} = w- u \in W^{1,2}_0(\Omega_\theta, \mu)$ is a weak solution of 
\[
\left\{
\begin{array}{cccl}
\textup{div}[\tA(x) \nabla \tilde{w}] & = & \textup{div}[\tA(x) \nabla u], & \quad \textup{in} \quad \Omega_\theta, \\
\tilde{w} & = & 0, & \quad \textup{on} \quad \partial \Omega_\theta.
\end{array}
\right.
\]
For the later, its existence and uniqueness of the weak solution $\tilde{w}$ is established in \cite[Theorem 2.2]{Fabes}.
\end{remark}
\noindent
%=================
The following energy estimates are the main result of our first approximation step.
\begin{lemma} \label{step-1-comparision-bd} Assume that $u \in W^{1,2}(\Omega_r, \mu)$ is a weak solution of \eqref{boundary-eqn-r} and $w \in W^{1,2}(\Omega_\theta,\mu)$ is a weak solution of \eqref{boundary-w-eqn}. Then, there is $C = C(\Lambda) >0$ such that
\begin{equation} \label{energy-w-bd}
\int_{\Omega_\theta} |\nabla w|^2\mu(x) dx  \leq C(\Lambda) \int_{\Omega_\theta} |\nabla u|^2 \mu (x) dx.
\end{equation}
Moreover, for every $\delta >0$, there is $K_\delta >0$ depending on $\delta$ and $\omega_0$ such that
\begin{equation} \label{u-w-bd}
\begin{split}
& \frac{1}{\mu(B_\theta)}\int_{\Omega_\theta} |\nabla (u-w)|^2 \mu(x) dx \\
& \leq  C(\Lambda, M_0)\left[ \frac{\delta^2}{\mu(B_\theta)}  \int_{\Omega_\theta}  |\nabla u|^2  \mu(x) dx  + \frac{1}{\mu(B_\theta)} \int_{\Omega_\theta} \Big| \frac{\F}{\mu}\Big|^2 \mu(x)  dx   + \frac{M_1^2 K_\delta^2 + \delta^2}{\lambda^2}\right].
\end{split}
\end{equation}
\end{lemma}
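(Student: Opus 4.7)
The strategy is to imitate the interior proof of Lemma~\ref{step-1-comparision} almost verbatim, with one new bookkeeping point: we must verify that $u-w$, extended by zero outside $\Omega_\theta$, is an admissible test function for the Dirichlet problem \eqref{boundary-eqn-r} on $\Omega_r$. Since the boundary condition in \eqref{boundary-w-eqn} reads $w=u$ on the whole of $\partial\Omega_\theta$ (including the portion $\partial_w\Omega_\theta\subset\partial\Omega$), the difference $u-w$ has zero trace on $\partial\Omega_\theta$. Its extension by zero to $\Omega_r$ therefore belongs to $W^{1,2}_0(\Omega_r,\mu)$, and by density of $C_0^\infty(\Omega_r)$ in that space, I can legitimately test both the equation for $u$ on $\Omega_r$ and the equation for $w$ on $\Omega_\theta$ against $u-w$.

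For the energy bound \eqref{energy-w-bd}, I test \eqref{boundary-w-eqn} with $u-w$ to obtain $\int_{\Omega_\theta}\langle\tA\nabla w,\nabla w\rangle\,dx = \int_{\Omega_\theta}\langle\tA\nabla w,\nabla u\rangle\,dx$, then apply the two-sided ellipticity \eqref{ellip}, Cauchy--Schwarz and Young's inequality in exactly the same way as in the interior case; this yields $\int_{\Omega_\theta}|\nabla w|^2\mu\le C(\Lambda)\int_{\Omega_\theta}|\nabla u|^2\mu$.

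For the comparison estimate \eqref{u-w-bd}, I test \eqref{boundary-eqn-r} and \eqref{boundary-w-eqn} with $u-w$ and subtract. Splitting $\A_\lambda(x,u,\nabla u)=[\A_\lambda(x,u,\nabla u)-\tA(x)\nabla u]+\tA(x)\nabla u$ gives
\[
\int_{\Omega_\theta}\langle\tA\nabla(u-w),\nabla(u-w)\rangle\,dx
= \int_{\Omega_\theta}\langle\tA\nabla u-\A_\lambda(x,u,\nabla u),\nabla(u-w)\rangle\,dx - \int_{\Omega_\theta}\langle\F,\nabla(u-w)\rangle\,dx.
\]
The asymptotic Uhlenbeck condition \eqref{lambda-asymp-Uh} bounds the integrand of the first term on the right by $\frac{1}{\lambda}\omega_0(\lambda u,\lambda|\nabla u|)(1+\lambda|\nabla u|)|\nabla(u-w)|\mu$. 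Invoking \eqref{omega-limit} to pick $K_\delta$ so that $\omega_0(z,s)\le\delta$ for $s\ge K_\delta$, and using $\|\omega_0\|_\infty\le M_1$ on the complementary range, I get the pointwise bound $\omega_0(z,s)(1+s)\le\delta(1+s)+M_1 K_\delta$ (up to a harmless constant), so that $\frac{1}{\lambda}\omega_0(\lambda u,\lambda|\nabla u|)(1+\lambda|\nabla u|)\le\delta|\nabla u|+\lambda^{-1}(\delta+M_1 K_\delta)$.

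I then bound the $\F$-term via $|\F||\nabla(u-w)|=|\F/\mu|\,|\nabla(u-w)|\,\mu$, apply Cauchy--Schwarz and Young's inequality on each piece, and absorb the resulting $\varepsilon\int_{\Omega_\theta}|\nabla(u-w)|^2\mu$ term into the left using the lower bound in \eqref{ellip}. Dividing by $\mu(B_\theta)$ and noting $\mu(\Omega_\theta)\le\mu(B_\theta)$ yields exactly \eqref{u-w-bd}, with the $M_0$ dependence arising only if one wishes to replace $\mu(\Omega_\theta)$ by $\mu(B_\theta)$ in intermediate steps (via Lemma~\ref{doubling}). There is really no serious obstacle distinct from the interior setting; the only delicate point is the test-function justification in the first paragraph, and everything else is a direct transcription of the interior computation of Lemma~\ref{step-1-comparision}.
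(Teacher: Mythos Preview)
Your proposal is correct and follows essentially the same route as the paper: the paper also tests \eqref{boundary-w-eqn} with $u-w\in W^{1,2}_0(\Omega_\theta,\mu)$ to get \eqref{energy-w-bd}, derives the identity $\int_{\Omega_\theta}\langle\tA\nabla(u-w),\nabla(u-w)\rangle\,dx=\int_{\Omega_\theta}\langle\tA\nabla u-\A_\lambda(x,u,\nabla u),\nabla(u-w)\rangle\,dx+\int_{\Omega_\theta}\langle\F,\nabla(u-w)\rangle\,dx$, and then explicitly defers to the computation of Lemma~\ref{step-1-comparision} together with the remark $\mu(B_\theta)\sim\mu(\Omega_\theta)$. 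The only discrepancy is the sign on the $\F$-term in your displayed identity (it should be $+$, not $-$), but this is harmless since the next step takes absolute values.
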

%==================
\begin{proof}  Since $w - u \in W^{1,2}_0(\Omega_\theta, \mu)$, we can use it as a test function for the equation \eqref{boundary-w-eqn} to obtain
\begin{equation} \label{boundary-w-test}
\int_{\Omega_\theta} \wei{\tilde{\A}(x) \nabla w, \nabla w} dx = \int_{\Omega_\theta} \wei{\tilde{\A}(x)\nabla w, \nabla u} dx. 
\end{equation}
From this, \eqref{ellip}, and the H\"{o}lder's inequality, we obtain
\[
\Lambda^{-1}\int_{\Omega_\theta} |\nabla w|^2 \mu(x) dx \leq \Lambda \left\{ \int_{\Omega_\theta} |\nabla u|^2 \mu(x) dx \right\}^{1/2} \left\{\int_{\Omega_\theta} |\nabla u|^2 \mu dx \right\}^{1/2}.
\]
Hence
\[
\int_{\Omega_\theta} |\nabla w|^2 \mu(x) dx \leq C(\Lambda) \int_{\Omega_\theta} |\nabla u|^2 \mu(x) dx.
\]
Therefore, we obtain \eqref{energy-w-bd}.  We next prove \eqref{u-w-bd}. Observe that it follows from \eqref{boundary-w-test} that
\[
\int_{\Omega_\theta} \wei{\tA(x) \nabla (u-w), \nabla (u-w) } dx = 
\int_{\Omega_\theta} \wei{\tA(x) \nabla u, \nabla (u-w)} dx.
\]
On the other hand, use $u-w \in W^{1,2}(\Omega_\theta, \mu)$ as a test function for the equation \eqref{boundary-eqn-r}, we see that
\[
\int_{\Omega_\theta} \wei{\A_\lambda(x, u, \nabla u), \nabla (u-w)} dx = \int_{\Omega_\theta} \wei{\F, \nabla (u-w)} dx.
\]
Combing the last two identity, we obtain
\[
\begin{split}
& \int_{\Omega_\theta} \wei{\tA(x) \nabla (u-w), \nabla (u-w) } dx = \int_{\Omega_\theta} \wei{\tA(x) \nabla u - \A_\lambda(x,u,\nabla u), \nabla (u-w)} dx + \int_{\Omega_\theta} \wei{ \F, \nabla (u-w)} dx.
\end{split}
\]
The proof now follows exactly the same as that of Lemma \ref{step-1-comparision} with the note that $\mu(B_\theta) \sim \mu(\Omega_\theta)$. We then skip it.
\end{proof}
%=================
\subsubsection{Step 2: Second approximation} We approximate the weak solution $w$ of \eqref{boundary-w-eqn} by the solution $v$ of the following equation on the flat domain
\begin{equation} \label{v-boundary-eqn}
\left\{
\begin{array} {cccl}
\textup{div}[\A_0 \nabla v] & = & 0, &\quad \text{in} \quad B_{\frac{3r}{4}}, \\
 v & = & 0, & \quad \text{on} \quad T_{\frac{3r}{4}}, 
\end{array} \right.
\end{equation}
for some constant, symmetric, uniformly elliptic matrix that is sufficiently close to $\wei{\tA}_{\Omega_r}$. Our next lemma is the main result of this subsection.
\begin{lemma} \label{L2-boundary-gradient-aprox} Let $\Lambda >0, M_0 \geq 1$ be fixed. For every $\epsilon >0$ sufficiently small, there exists sufficiently small number $\delta' >0$ depending on only $\epsilon, \Lambda, n$, and $M_0$ such that for every $\delta \in (0, \delta']$,  the following statement holds true: If \eqref{ellip} holds, $[\mu]_{A_2} \leq M_0$, 
\[
\frac{1}{\mu({B_r})} \int_{\Omega_r}| \tA - \wei{\tA}_{\Omega_r}|^2 \mu^{-1} dx \leq \delta^2,  \quad \frac{1}{\mu(B_{7r/8})}\int_{\Omega_{7r/8}} | \nabla g |^2  \mu (x) dx  \leq \delta^2, 
\]
 and if $w$ is a weak solution of  
\[
\left\{
\begin{array}{cccl}
\textup{div}[\tA(x) \nabla w] & = & 0, & \quad \text{in} \quad  \Omega_{\frac{7r}{8}}, \\
w & = & g, & \quad \text{on} \quad \partial_w \Omega_{\frac{7r}{8}},
\end{array}
\right.
\]
satisfying
\begin{equation} \label{L2-gradient-u-grad}
  \frac{1}{\mu(\Omega_{\frac{7r}{8}}) }\int_{\Omega_{\frac{7r}{8}}} |\nabla w|^2 d\mu \leq C_0 , \quad \text{with some} \quad C_0 = C_0(\Lambda, M_0) \geq 1,
\end{equation}
then 
there exists a constant, symmetric matrix $\mathbb{A}_0$ and a weak solution $v$ of \eqref{v-boundary-eqn} satisfying 
\begin{equation}\label{a_0-bar-a-4}
\Big| \langle \tA\rangle_{\Omega_r} - \mathbb{A}_0 \Big| \leq \frac{\epsilon  \mu(B_r)}{|B_r|}, 
\quad\text{and}\quad 
\frac{1}{\mu(B_{r/2})}\int_{\Omega_{r/2}} |\nabla w  -\nabla v|^2 \mu(x) dx \leq \epsilon.
\end{equation}
Moreover, there is $C =C(\Lambda, n, M_0)$ such that
\begin{equation*} 
\norm{\nabla v}_{L^\infty(\Omega_{r/2})} \leq C(\Lambda, n, M_0) .
\end{equation*}
\end{lemma}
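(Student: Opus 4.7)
The plan is to prove the lemma by a compactness/contradiction scheme paralleling the interior version Lemma \ref{L2-gradient-aprox} and \cite[Proposition 4.4]{CMP}, augmented with the machinery needed to track the Dirichlet trace $g$ on the Reifenberg-flat piece $\partial_w\Omega_{7r/8}$ and to pass to a limit on the flat model half-ball $B_{3r/4}^+$. Suppose the conclusion fails; then for some $\epsilon_0>0$ one extracts sequences $\delta_k\searrow 0$, Reifenberg-flat domains $\Omega^k_r$ satisfying $B_r^+\subset\Omega^k_r\subset B_r\cap\{x_n>-4r\delta_k\}$, weights $\mu_k\in A_2$ with $[\mu_k]_{A_2}\leq M_0$, matrices $\tA_k$ satisfying \eqref{ellip} together with the BMO bound at scale $\delta_k$, boundary data $g_k$ with $\|\nabla g_k\|_{L^2(\Omega^k_{7r/8},\mu_k)}\leq \delta_k\,\mu_k(B_{7r/8})^{1/2}$, and weak solutions $w_k$ obeying \eqref{L2-gradient-u-grad}, yet for which no constant symmetric $\A_0$ within $\epsilon_0\,\mu_k(B_r)/|B_r|$ of $\langle\tA_k\rangle_{\Omega^k_r}$ admits a solution $v_k$ of \eqref{v-boundary-eqn} with $\fint_{\Omega^k_{r/2}}|\nabla w_k-\nabla v_k|^2\,d\mu_k<\epsilon_0$.

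To place the problem at unit scale I would normalize $\mu_k,\tA_k$ by $m_k:=\mu_k(B_r)/|B_r|$, which preserves $[\mu_k]_{A_2}\leq M_0$ and the ellipticity \eqref{ellip} while forcing $\fint_{B_r}\mu_k\,dx=1$. The Coifman-Fefferman reverse H\"older inequality underlying Lemma \ref{doubling} then supplies a uniform $L^s(B_r)$ bound for both $\mu_k$ and $\mu_k^{-1}$ for some $s>1$, so along a subsequence $\mu_k\rightharpoonup\mu_\infty$ in $L^s(B_r)$ with $\mu_\infty\in A_2$, and $\langle\tA_k\rangle_{\Omega^k_r}\to\A_0$ for a constant symmetric $\A_0$ satisfying \eqref{ellip} with $\mu_\infty$. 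Cauchy-Schwarz applied to the BMO hypothesis yields
\[
\int_{\Omega^k_r}|\tA_k-\langle\tA_k\rangle_{\Omega^k_r}|\,dx\leq \delta_k\,\mu_k(B_r)^{1/2}\Big(\int_{\Omega^k_r}\mu_k^{-1}\,dx\Big)^{1/2}\longrightarrow 0,
\]
so $\tA_k\mathbf{1}_{\Omega^k_r}\to\A_0\mathbf{1}_{B_r^+}$ in $L^1(B_r)$, while Reifenberg flatness forces $\Omega^k_r\to B_r^+$ in Hausdorff distance.

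The uniform energy bound, the weighted Sobolev embedding of Fabes-Kenig-Serapioni, and a weighted Poincar\'e inequality then extract a weak limit $w_\infty\in W^{1,2}(B_{7r/8}^+,\mu_\infty)$ of $w_k\vert_{B_{7r/8}^+}$. After subtracting an appropriate constant from $g_k$ (which affects neither the equation nor the gradient-level conclusion), the smallness of $\|\nabla g_k\|_{L^2(\mu_k)}$ and the Hausdorff convergence of $\partial_w\Omega^k_{7r/8}$ to $T_{7r/8}$ force $w_\infty=0$ on $T_{7r/8}$. Passing to the limit in the weak formulation, using the $L^1$-convergence of $\tA_k\mathbf{1}_{\Omega^k_r}$ tested against smooth vector fields together with the weak $L^2$-convergence of $\nabla w_k$, identifies $w_\infty$ as a weak solution of $\textup{div}[\A_0\nabla w_\infty]=0$ in $B_{7r/8}^+$ vanishing on $T_{7r/8}$. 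Setting $v:=w_\infty\vert_{B_{3r/4}^+}$ and invoking classical $C^{1,\alpha}$ boundary regularity for constant-coefficient elliptic equations on half-balls produces the Lipschitz estimate $\|\nabla v\|_{L^\infty(B_{r/2}^+)}\leq C(\Lambda,n,M_0)$, and by construction $\A_0$ satisfies \eqref{a_0-bar-a-4}.

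The main obstacle, and the step that produces the contradiction, is upgrading $\nabla w_k\rightharpoonup \nabla v$ to \emph{strong} convergence in $L^2(\Omega^k_{r/2},\mu_k)$. For this I plan to use $w_k-\tilde v_k$ as a test function in both equations, where $\tilde v_k$ is a cutoff extension of $v$ matching the (re-centered) $g_k$ near $\partial_w\Omega^k_{r/2}$; coercivity of $\tA_k$ then delivers a Caccioppoli-type inequality whose right-hand side is dominated by $\|\tA_k-\A_0\|_{L^1(\Omega^k_r)}\|\nabla v\|_{L^\infty}$ together with error terms controlled by $\|\nabla g_k\|_{L^2(\mu_k)}$ and by the Hausdorff distance between $\Omega^k_r$ and $B_r^+$. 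Each such ingredient vanishes as $k\to\infty$ by the previous paragraph, providing the strong $L^2(\mu_k)$-gradient convergence that contradicts the choice of $\epsilon_0$ and completes the proof.
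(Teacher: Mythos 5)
Your proof takes a genuinely different route from the paper. The paper's own proof is a two-line reduction: setting $g' = g/C_0$, $w' = (w-g)/C_0$, $\F' = \tA\nabla g'$, the function $w'$ solves the homogeneous-Dirichlet problem $\textup{div}[\tA\nabla w'] = -\textup{div}[\F']$ in $\Omega_{7r/8}$ with $w'=0$ on $\partial_w\Omega_{7r/8}$, the smallness of $\norm{\nabla g}_{L^2(\mu)}$ makes $\norm{\F'/\mu}_{L^2(\mu)}$ small and normalizes $\norm{\nabla w'}_{L^2(\mu)}$, and the conclusion is then quoted directly from \cite[Proposition 5.5]{CMP}, which already treats this problem with zero boundary data on a Reifenberg-flat piece. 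You instead re-run the compactness--contradiction argument underlying \cite[Proposition 5.5]{CMP} from scratch, carrying the nonzero trace $g$ through and passing to the limit on the flat half-ball. Both are legitimate: the paper's version is essentially immediate given the cited proposition, while yours is self-contained but reproduces machinery that the paper deliberately outsources to \cite{CMP}. The substitution $w\mapsto (w-g)/C_0$ is precisely what makes the reduction available, and it is worth noticing that the paper's hypothesis on $g$ is crafted so that this substitution produces exactly the class of right-hand sides \cite[Proposition 5.5]{CMP} allows.

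Two small technical points to tighten in your sketch. In the Cauchy--Schwarz step you should be pairing against $\mu_k$ rather than $\mu_k^{-1}$: writing $|\tA_k-\langle\tA_k\rangle| = \bigl(|\tA_k-\langle\tA_k\rangle|\,\mu_k^{-1/2}\bigr)\,\mu_k^{1/2}$ yields
\[
\int_{\Omega_r^k}|\tA_k-\langle\tA_k\rangle_{\Omega_r^k}|\,dx
\leq \Big(\int_{\Omega_r^k}|\tA_k-\langle\tA_k\rangle_{\Omega_r^k}|^2\mu_k^{-1}dx\Big)^{1/2}\mu_k(\Omega_r^k)^{1/2}
\leq \delta_k\,\mu_k(B_r),
\]
which still vanishes after your normalization, just via the correct pairing. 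Second, passing to the limit in the weak formulation is more delicate than the $L^1(B_r)$ convergence of $\tA_k$ plus weak $L^2(\mu_k)$ convergence of $\nabla w_k$ suggests: the two compensating estimates you actually need are (a) an unweighted $L^q(B_{7r/8})$ bound on $\nabla w_k$ for some $q>1$, coming from the reverse H\"older inequality for $\mu_k^{-1}\in A_2$, so that $\nabla w_k\rightharpoonup \nabla w_\infty$ in a fixed unweighted space, and (b) the weighted bound $\int_{\Omega_r^k}|\tA_k-\A_0|^2\mu_k^{-1}\,dx\to0$ paired against the uniform weighted energy $\int|\nabla w_k|^2\mu_k\,dx$ via Cauchy--Schwarz to kill the remainder $\int(\tA_k-\A_0)\nabla w_k\cdot\nabla\varphi\,dx$. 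You have all the ingredients (you invoke the reverse H\"older bounds for $\mu_k$ and $\mu_k^{-1}$), but the way the two kinds of convergence combine needs to be spelled out, since $\tA_k\to\A_0$ in $L^1$ alone does not interact usably with a gradient bounded only in a $k$-dependent weighted space.
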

%=================================
\begin{proof}  Let us denote $g' = \frac{g}{C_0 }$, $w' = \frac{w - g}{C_0}$, and $\mathbf{F}'(x) =\tA(x) \nabla g'(x)$. Then, $w' \in W^{1,2}(\Omega_{\frac{7r}{8}}, \mu)$ is a weak solution of
\[
\left\{
\begin{array}{cccl}
\text{div}[\tA(x) \nabla w'] & = & - \text{div}[\F'], & \quad \text{in} \quad \Omega_{\frac{7r}{8}}, \\
 w'  & = & 0, & \quad \text{on} \quad \partial_w\Omega_{\frac{7r}{8}}.
 \end{array}
\right.
\]
Observe that 
\[
\frac{1}{\mu(B_{7r/8})} \int_{\Omega_{7r/8}} \Big| \frac{\F'}{\mu}\Big|^2 \mu(x) dx =  \frac{1}{\mu(B_{7r/8})}\int_{\Omega_{7r/8}} \Big|\frac{\tilde{\A}\nabla g'}{\mu(x)} \Big|^2 \mu(x) \leq
 \frac{\Lambda^2}{\mu(B_{7r/8})}\int_{\Omega_{7r/8}} |\nabla g'|^2 \mu(x) dx \leq \Lambda^2 \delta.
\]
Moreover,
\[
\frac{1}{\mu(B_{7r/8})} \int_{\Omega_{7r/8}} |\nabla w'|^2 \mu(x) dx \leq 1.
\]
The result then follows from \cite[Proposition 5.5]{CMP} by a suitable scaling.
\end{proof}
\subsubsection{Proof of Proposition \ref{boundary-approximation-lemma}}  We skip it because it is the same as that of Proposition \ref{interio-approx-lemma}.
\subsection{Level set estimates up to the boundary} To obtain the estimates of level sets for 
the maximal function $\M_\mu(|\nabla u|^2\chi_\Omega)$, we consider the following equation with the scaling parameter $\lambda>0$:
\begin{equation} \label{lambda-eqn-all}
\left\{
\begin{array}{cccl}
\textup{div}[\A_\lambda (x, u, \nabla u)] & =& \textup{div}[\F], & \quad \text{in} \quad \Omega. \\
u &= & g, & \quad \text{on}\quad \partial\Omega.
\end{array}
\right.
\end{equation}
We need several intermediate steps in order the prove the Proposition \ref{decay-est-global} below, which is the main result of the subsection. Our first lemma is in the same fashion as Lemma \ref{density-est-interior-1}.
\begin{lemma} \label{density-est-bd-1} Let $\Lambda >0, M_0 \geq 1$ be given. There exists $N_1 = N_1(\Lambda, M_0, n) >1$ such that the following statement holds: Let $M_1 >0, M_2 \geq 1$, $\K \subset \R$ be some open interval, and let $\omega_0 : \K \times [0, \infty) \rightarrow [0, \infty)$ be continuous satisfying \eqref{omega-limit} and $\norm{\omega_0}_{\infty} \leq M_1$. Then, for every sufficiently small $\epsilon >0$, there exist sufficiently small $\delta_1 = \delta_1(\epsilon, \Lambda, M_0, M_2, n) >0 $ and a large number $\lambda_1 = \lambda_1(\epsilon, \Lambda, M_0, M_1, M_2, \omega_0, n)\geq 1$ such that for every $R>0, y_0 \in \overline{\Omega}$ if $0 \in \Omega_R(y_0)$, and if
 $\A \in \U_{\Omega, \K}(\Lambda, M_0, M_1, \omega_0)$ with its asymptotical matrix $\tA$ and weight function $\mu \in A_2$ satisfying
\[
\frac{1}{\mu(B_{3r})} \int_{\Omega_{3r}} \Big| \tA(z) - \wei{\tA}_{\Omega_{3r}(0)}\Big|^2 \mu^{-1}(z) dz \leq \delta_1,
\]
for some $r \in (0,R/4)$, and 
\[
B_{3r}^+(0) \subset \Omega_{3r}(0) \subset B_{3r}(0) \cap \{(x', x_n) \in \R^n: x_n > - 12r\delta_1\}
\]
then for every $\lambda \geq \lambda_1$ and every weak solution $u \in W^{1,2}(\Omega, \mu)$ of \eqref{lambda-eqn-all}
%\[
%\left\{
%\begin{array}{cccl}
%\textup{div}[\A_\lambda(x, u, \nabla u)] & =& \textup{div}[\F], & \quad \textup{on} \quad \Omega_r, \\
%u &= & g, & \quad \textup{in} \quad \partial_{w} \Omega_{3r},
%\end{array}
%\right.
%\]
satisfying
\begin{equation} \label{non-empty-b-intersection-1}
\begin{split}
& B_r \cap \Big\{\Omega_{R}(y_0): \M_{\mu, \Omega_{2R}(y_0)} (|\nabla u|^2) \leq 1 \Big \} \cap \\
& \quad \cap \Big \{\Omega_{R}(y_0): \M_{\mu, \Omega_{2R}(y_0)} (|\F/\mu|^2) \leq \delta_1 \Big \}  \cap \Big \{\Omega_{R}(y_0): \M_{\mu, \Omega_{2R}(y_0)} (|\nabla g|^2) \leq \delta_1 \Big \}\not= \emptyset,
\end{split}
\end{equation}
then it holds
\[
\omega\Big(\Big\{x \in \Omega_{R}(y_0): \M_{\mu,\Omega_{2R}(y_0)}(|\nabla u|^2) >N_1\Big\} \cap B_r\Big) < \epsilon \omega(B_r),
\]
for every weight function $\omega \in A_q$ with $[\omega]_{A_q} \leq M_2$, for $1 \leq q < \infty$.
\end{lemma}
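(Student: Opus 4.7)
The plan is to mirror the proof of the interior analogue, Lemma \ref{density-est-interior-1}, but now invoke the boundary approximation result, Proposition \ref{boundary-approximation-lemma}, in place of the interior one. First, given $\epsilon>0$, I would pick a small parameter $\gamma = \gamma(\epsilon,\Lambda,M_0,M_2)\in(0,1)$ to be determined, and let $\delta_1 = \delta(\gamma,\Lambda,M_0,n)$ and $\lambda_1 = \lambda_0(\gamma,\Lambda,M_0,M_1,\omega_0,n)$ be the constants from Proposition \ref{boundary-approximation-lemma}. By hypothesis \eqref{non-empty-b-intersection-1}, there exists $x_0\in B_r\cap\Omega_R(y_0)$ with
\[
\M_{\mu,\Omega_{2R}(y_0)}(|\nabla u|^2)(x_0) \leq 1, \quad \M_{\mu,\Omega_{2R}(y_0)}(|\F/\mu|^2)(x_0) \leq \delta_1, \quad \M_{\mu,\Omega_{2R}(y_0)}(|\nabla g|^2)(x_0) \leq \delta_1.
\]
Since $r\leq R/4$ and $0\in\Omega_R(y_0)$, one checks that $\Omega_{3r}(0)\subset\Omega_{2R}(y_0)$, and that balls of radius $\tfrac{21r}{8}$ centered at $0$ are controlled from above by balls of radius $\tfrac{29r}{8}$ centered at $x_0$. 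Using the $A_2$-doubling of $\mu$ (Lemma \ref{doubling}) this yields the averaged bounds
\[
\fint_{\Omega_{21r/8}(0)}|\nabla u|^2 d\mu \leq C(M_0,n), \quad \fint_{\Omega_{21r/8}(0)}|\F/\mu|^2 d\mu \leq C(M_0,n)\delta_1, \quad \fint_{\Omega_{21r/8}(0)}|\nabla g|^2 d\mu \leq C(M_0,n)\delta_1.
\]

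Next, I would rescale $u,\F,g$ by the constant $C(M_0,n)$ (replacing $\lambda$ by a correspondingly larger parameter, still $\geq \lambda_1$) so as to normalize the $L^2(\mu)$ averages to those required in Proposition \ref{boundary-approximation-lemma}. Since $B_{3r}^+(0)\subset\Omega_{3r}(0)\subset B_{3r}\cap\{x_n>-12r\delta_1\}$ and the BMO-type smallness of $\tA$ on $\Omega_{3r}(0)$ holds by hypothesis, Proposition \ref{boundary-approximation-lemma} produces a comparison function $v$ on $\Omega_{3r/2}(0)$ with
\[
\fint_{\Omega_{3r/2}(0)}|\nabla u-\nabla v|^2 d\mu \leq C(M_0,n)\gamma, \qquad \|\nabla v\|_{L^\infty(\Omega_{3r/2}(0))} \leq C_*(\Lambda,M_0,n).
\]
I then set $N_1 = \max\{4C_*^2,\;5^{2n}M_0\}$ and claim the inclusion
\[
\{x\in B_r\cap\Omega_R(y_0): \M_{\mu,\Omega_{3r/2}(0)}(|\nabla u-\nabla v|^2)(x) \leq C_*^2\} \subset \{x\in B_r\cap\Omega_R(y_0): \M_{\mu,\Omega_{2R}(y_0)}(|\nabla u|^2)(x) \leq N_1\}.
\]
This is verified by splitting the scales $\rho\leq r/2$ (where $B_\rho(x)\cap\Omega\subset\Omega_{3r/2}(0)$, use the triangle inequality and the $L^\infty$ bound on $\nabla v$) versus $\rho>r/2$ (where $B_\rho(x)\subset B_{5\rho}(x_0)$ and doubling plus the choice of $x_0$ gives the bound $M_0 5^{2n}$).

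Taking complements, the bad set sits inside $E:=\{x\in B_r\cap\Omega_R(y_0): \M_{\mu,\Omega_{3r/2}(0)}(|\nabla u-\nabla v|^2)>C_*^2\}$. The weak-type $(1,1)$ bound for $\M_\mu$, combined with the energy estimate for $\nabla u-\nabla v$ and doubling of $\mu$, gives
\[
\frac{\mu(E)}{\mu(B_r)} \leq C(\Lambda,M_0,n)\gamma.
\]
The main point, which is the place where one pays for having two different weights, is that we must translate this from $\mu$-measure to $\omega$-measure. This is exactly what Lemma \ref{compare-omega-mu} does: it produces an exponent $\beta=\beta(M_2,n)>0$ such that
\[
\frac{\omega(E)}{\omega(B_r)} \leq C(M_0,M_2,n)\Big(\frac{\mu(E)}{\mu(B_r)}\Big)^{\beta/2} \leq C^*(\Lambda,M_0,M_2,n)\gamma^{\beta/2}.
\]
Choosing $\gamma$ small enough that $C^*\gamma^{\beta/2} = \epsilon$ completes the proof. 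The only genuine obstacle I anticipate is the bookkeeping around the scaling parameter $\lambda$: the rescaling by $C(M_0,n)$ on $u$ forces one to invoke Proposition \ref{boundary-approximation-lemma} at a larger value of $\lambda$, but since $\lambda\geq\lambda_1$ and the threshold $\lambda_0$ in Proposition \ref{boundary-approximation-lemma} is fixed, this is absorbed provided one chooses $\lambda_1$ a fixed constant multiple larger. Everything else is a direct transcription of the interior argument, with the boundary flatness and Reifenberg geometry entering only through the hypotheses of the boundary approximation step.
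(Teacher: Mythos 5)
Your proposal is correct and follows essentially the same approach as the paper: invoking Proposition \ref{boundary-approximation-lemma} after locating the favorable point $x_0$ via \eqref{non-empty-b-intersection-1}, using $A_2$-doubling to pass to averaged bounds on $\Omega_{21r/8}$, proving the inclusion of maximal-function level sets by splitting scales at $\rho = r/2$, and then converting from $\mu$-measure to $\omega$-measure via Lemma \ref{compare-omega-mu}. Your use of $C_*^2$ (rather than the paper's $C_*$) for the squared $L^\infty$ bound on $\nabla v$ is in fact the more careful bookkeeping; and your remark about enlarging $\lambda_1$ is unnecessary but harmless, since the rescaling multiplies $\lambda$ by a constant larger than one.
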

\begin{proof} Though similar Lemma \ref{density-est-interior-1}, but details are different, hence a proof is needed. Let $\gamma >0$ sufficiently small to be determined and depending on $\epsilon, \Lambda, M_0, M_2, n$. Let $\delta_1 = \delta(\gamma, \Lambda, M_0, n) >0 $ and sufficiently small, and $\lambda_1 = \lambda_0(\gamma, M_0, M_1, \omega_0, n) \geq 1$, where $\delta$ and $\lambda_0$ are defined in Proposition \ref{boundary-approximation-lemma}.  By \eqref{non-empty-b-intersection-1}, there is $x_0 \in \Omega \cap B_r \cap B_R(y_0)$ such that
\begin{equation} \label{x-zero-assump}
\M_{\mu, \Omega_{2R}(y_0)}(|\nabla u|^2)(x_0) \leq 1, \quad \M_{\mu, \Omega_{2R}(y_0)} (|\F/\mu))(x_0) \leq \delta_1, \quad \M_{\mu, \Omega_{2R}(y_0)}(|\nabla g|^2)(x_0) \leq \delta_1.
\end{equation}
From this and since $\Omega_{21r/8} = B_{21r/8} \cap \Omega \subset \Omega_{29r/8}(x_0) = B_{29r/8}(x_0) \cap \Omega\subset  \Omega_{2R}(y_0)$, we obtain
\[
\begin{split}
\frac{1}{\mu(B_{21r/8})} \int_{\Omega_{21r/8}} |\nabla u|^2 \mu(x) dx \leq \frac{\mu(B_{29r/8}(x_0))}{\mu(B_{29r/8})} \frac{1}{\mu({B_{29r/8}(x_0)})} \int_{\Omega_{29r/8}(x_0)} |\nabla u|^2 \mu(x) dx \leq M_0 \Big(29/21\Big)^{2n}, \\
\frac{1}{\mu(B_{21r/8})} \int_{\Omega_{21r/8}} |\F/\mu|^2 \mu(x) dx \leq \frac{\mu(B_{29r/8}(x_0))}{\mu(B_{21r/8})} \frac{1}{\mu({B_{29r/8}(x_0)})} \int_{\Omega_{29r/8}(x_0)} |\F/\mu|^2 \mu(x) dx \leq M_0 \Big(29/21\Big)^{2n} \delta_1, \\
\frac{1}{\mu(B_{21r/8})} \int_{\Omega_{21r/8}} |\nabla g|^2 \mu(x) dx \leq \frac{\mu(B_{29r/8}(x_0))}{\mu(B_{21r/8})} \frac{1}{\mu({B_{29r/8}(x_0)})} \int_{\Omega_{29r/8}(x_0)} |\nabla g|^2 \mu(x) dx \leq M_0 \Big(29/21\Big)^{2n} \delta_1,
\end{split}
\]
Then, with suitable scaling, we can apply Proposition \ref{boundary-approximation-lemma} to find a function $v$ such that
\begin{equation} \label{bd-u-maximal-op}
\frac{1}{\mu(B_{3r/2})}\int_{\Omega_{3r/2}}|\nabla u - \nabla v|^2 \mu(x) dx \leq \gamma M_0  \Big(29/21\Big)^{2n}, \quad 
\norm{\nabla v}_{L^\infty(\Omega_{3r/2})} \leq C_* = C(\Lambda, M_0, n).
\end{equation}
Let $N_1 = \max\{4C_*, 5^{2n} M_0 \}$, we claim that
\begin{equation} \label{bd-set-compared}
 \Big\{x \in \Omega_r : \M_{\mu, \Omega_{3r/2}}(|\nabla u- \nabla v|^2) \leq C_*\Big\} \subset \Big\{x \in \Omega_r: \M_{\mu, \Omega_{2R}(y_0)} (|\nabla u|^2) \leq N_1  \Big\}.
\end{equation}
Indeed, let $x$ be a point in the set of the left hand side of \eqref{bd-set-compared}, we need to show that 
\begin{equation} \label{bd-N-Maximal-op}
\M_{\mu, \Omega_{2R}(y_0)}(|\nabla u|^2) (x) \leq N_1.
\end{equation}
 For $\rho >0$, and consider the case $\rho < r/2$. In this case, $\Omega_\rho(x) = \Omega \cap B_\rho(x) \subset \Omega_{3r/2} = \Omega \cap B_{3r/2}(0) \subset \Omega_{2R}(y_0)$. Therefore,
\[
\begin{split}
\frac{1}{\mu(B_\rho(x))} \int_{B_{\rho}(x) \cap \Omega_{2R}(y_0)} |\nabla u|^2 \mu(x) dx & \leq 2  \left[ \frac{1}{\mu(B_\rho(x))}  \int_{\Omega_{\rho}(x)}|\nabla u - \nabla v|^2 \mu(x) + \frac{1}{\mu(B_\rho(x))}  \int_{\Omega_{\rho}(x)} |\nabla v|^2 \mu(x) dx  \right] \\
&  \leq 2\left[\M_{\mu, \Omega_{3r/2}}(|\nabla u - \nabla v|^2)(x) + C_* \frac{\mu(\Omega_\rho(x))}{\mu(B_\rho(x))} \right] \\
& \leq 4C_* \leq N_1.
\end{split}
\]
Now, if $\rho \geq r/2$, then  $B_\rho(x) \subset B_{5\rho}(x_0)$. Therefore, it follows from \eqref{x-zero-assump} and Lemma \ref{doubling} that
\[
\begin{split}
\frac{1}{\mu(B_\rho(x))} \int_{B_\rho(x) \cap \Omega_{2R}(y_0)} |\nabla u|^2 \mu(x) & \leq \frac{\mu(B_{5\rho}(x_0))}{\mu(B_\rho(x)) } \frac{1}{\mu(B_{5\rho}(x_0))} \int_{B_{5\rho(x_0)} \cap \Omega_{2R}(y_0)} |\nabla u|^2 \mu(x) dx\\
& \leq \frac{\mu(B_{5\rho}(x_0))}{\mu(B_\rho(x)) }   \M_{\mu, \Omega_{2R}(y_0)}(\nabla u|^2) (x_0) \leq 5^{2n}M_0 \leq N_1,
\end{split}
\]
This estimates proves \eqref{bd-N-Maximal-op}, and therefore implies \eqref{bd-set-compared}. By taking the complement of both sets in  \eqref{bd-set-compared} in $\Omega_r$, we obtain
\begin{equation} \label{bdry-E-set}
 \Big\{x \in \Omega_r: \M_{\mu, \Omega_{2R}(y_0)} (|\nabla u|^2) < N_1  \Big\} \subset E: =  \Big\{x \in \Omega_r : \M_{\mu, \Omega_{3r/2}}(|\nabla u- \nabla v|^2) > C_*\Big\}.
\end{equation}
%This implies
%\[
%\begin{split}
%\omega\Big(\Big\{x \in \Omega_R(y_0): \M_{\mu, \Omega_{2R}(y_0)} (|\nabla u|^2) < N  \Big\} \cap B_r \Big) 
%& \leq  \omega\Big(\Big\{x \in \Omega_r: \M_{\mu, \Omega_{2R}(y_0)} (|\nabla u|^2) < N  \Big\} \Big) \\
%& \leq \omega(E).
%\end{split}
%\]
On the other hand, by the weak type (1,1)-estimate, the doubling property in Lemma \ref{doubling}, \eqref{bd-u-maximal-op},  we also have
\[
\mu(E) \leq \frac{C(M_0) \mu(B_{3r/2})}{C_*} \frac{1}{\mu(B_{3r/2})} \int_{\Omega_{3r/2}} |\nabla u - \nabla v|^2 \mu(x) dx \leq C'(\Lambda, M_0, n) \gamma \mu(B_r).
\]
Hence,
\[
\frac{\mu(E)}{\mu(B_r)} \leq C'(\Lambda, M_0, n) \gamma.
\]
From this, \eqref{bdry-E-set}, and Lemma \ref{compare-omega-mu}, there is $\beta = \beta(M_2, n)$ such that 
\[
\begin{split}
\frac{\omega\Big(\Big\{x \in \Omega_R(y_0): \M_{\mu, \Omega_{2R}(y_0)} (|\nabla u|^2) < N_1  \Big\} \cap B_r \Big)}{\omega(B_r)} 
& \leq \frac{\omega\Big(\Big\{x \in \Omega_r: \M_{\mu, \Omega_{2R}(y_0)} (|\nabla u|^2) < N_1  \Big\}}{\omega(B_r)} \\
& \leq C(M_0, M_2) \left(\frac{\mu(E)}{\mu(B_r)} \right)^{\beta/2} \\
& \leq C_0(\Lambda, M_0, M_2,n) \gamma^{\beta/2}.
\end{split}
\]
Hence,
\[
\omega\Big(\Big\{x \in \Omega_R(y_0): \M_{\mu, \Omega_{2R}(y_0)} (|\nabla u|^2) < N  \Big\} \cap B_r \Big) \leq C_0(\Lambda, M_0, M_2,n) \gamma^{\beta/2}.
\]
Therefore, by choosing $\gamma$ such that $C_0(\Lambda, M_0, M_2,n) \gamma^{\beta/2} = \epsilon$, the lemma follows.
\end{proof}
The following version of  Lemma \ref{density-est-interior-1} is needed, and we state it for later reference.
\begin{lemma} \label{density-est-interior-2} Let $\Lambda, M_0 \geq 1, M_1>0, M_2 \geq 1$ and sufficient $\epsilon >0$. There exist $N_2 = N_2(\Lambda, M_0, n) >1$  and sufficiently small $\delta_2 = \delta_2(\epsilon, \Lambda, M_0, M_2, n) >0 $ such that the following statement holds: Let $\K \subset \R$ be some open interval, and let $\omega_0 : \K \times [0, \infty) \rightarrow [0, \infty)$ be continuous satisfying \eqref{omega-limit} and $\norm{\omega_0}_{\infty} \leq M_1$,  there exists a large number $\lambda_2 = \lambda_2(\epsilon, \Lambda, M_0, M_1, M_2, \omega_0, n)\geq 1$ such that for every $y_0 \in \overline{\Omega}$ and $R>0$, if $\A \in \U_{\Omega_{2R}(y_0), \K}(\Lambda, M_0, M_1, \omega_0)$ with its asymptotical matrix $\tA$ and weight function $\mu \in A_2$ satisfying
\[
\sup_{0 < \rho < R_0} \sup_{x \in \Omega_{R}(y_0)} \frac{1}{\mu(B_{\rho}(x))} \int_{B_{\rho}(x)\cap \Omega} \Big| \tA(z) - \wei{\tA}_{B_{\rho}(x) \cap\Omega}\Big|^2 \mu^{-1}(z) dz \leq \delta_2, \quad \text{for some} \quad R_0>0,
\]
and if $\lambda \geq \lambda_2$,  and $u \in W^{1,2}(\Omega_{2R}(y_0), \mu)$ a weak solution of 
%\begin{equation} \label{Omega-R-eqn}
%\left\{
%\begin{array}{cccl}
%\textup{div}[\A_\lambda(x,u, \nabla u)] & =& \textup{div}[\F], & \quad \textup{in} \quad \Omega_{2R}(y_0), \\
%u & =& g, & \quad \textup{in} \quad \partial_{w} \Omega_{2R}(y_0),
%\end{array}
%\right.
%\end{equation}
\eqref{lambda-eqn-all} 
so that with some $y \in \Omega_R(y_0)$, $0< r < \min\{R_0, R\}/3)$ such that $B_{3r}(y) \subset \Omega_{2R}(y_0)$, and  
\begin{equation} \label{non-empty-inter-intersection-all}
B_r(y) \cap \Big\{\Omega_R(y_0): \M_{\mu, \Omega_{2R}(y_0)}(|\nabla u|^2) \leq 1\Big \} \cap \Big \{\Omega_{R}(y_0): \M_{\mu, \Omega_{2R}(y_0)} (|\F/\mu|^2) \leq \delta_2 \Big \}  \not= \emptyset,
\end{equation}
then 
\[
\omega(\{x \in \Omega_{R}(y_0): \M_{\mu,\Omega_{2R}(y_0)}(|\nabla u|^2) >N_2\} \cap B_r(y)) < \epsilon \omega(B_\rho(y)),
\]
for every weight function $\omega \in A_q$ with $[\omega]_{A_q} \leq M_2$, for $1 \leq q \leq \infty$.
\end{lemma}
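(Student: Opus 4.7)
The plan is to follow the same scheme as Lemma \ref{density-est-interior-1} but in the relative setting of $\Omega_{2R}(y_0)$. The crucial geometric observation is the hypothesis $B_{3r}(y) \subset \Omega_{2R}(y_0)$, which means that on the ball $B_{3r}(y)$ the equation \eqref{lambda-eqn-all} is an interior equation and, since $B_{3r}(y) \cap \Omega = B_{3r}(y)$, the BMO smallness assumption reduces to
\[
\frac{1}{\mu(B_{3r}(y))}\int_{B_{3r}(y)}|\tA(z)-\wei{\tA}_{B_{3r}(y)}|^2\mu^{-1}(z)\,dz \leq \delta_2,
\]
which is exactly the hypothesis required by Proposition \ref{interio-approx-lemma}. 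Therefore, given $\epsilon>0$, I would choose an auxiliary small number $\gamma=\gamma(\epsilon,\Lambda,M_0,M_2,n)\in(0,1)$, to be fixed at the end, and let $\delta_2=\delta(\gamma,\Lambda,M_0,n)$ and $\lambda_2=\lambda_0(\gamma,\Lambda,M_0,M_1,\omega_0,n)$ be the constants provided by Proposition \ref{interio-approx-lemma}.

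Next, I would exploit the non-emptiness condition \eqref{non-empty-inter-intersection-all} to pick a base point $x_0\in B_r(y)$ for which the maximal functions of $|\nabla u|^2$ and $|\F/\mu|^2$ are controlled by $1$ and $\delta_2$ respectively. Since $B_{21r/8}(y)\subset B_{29r/8}(x_0)\cap\Omega_{2R}(y_0)$, the doubling property from Lemma \ref{doubling}(i) gives the normalized averages
\[
\fint_{B_{21r/8}(y)}|\nabla u|^2\,d\mu \leq M_0\Big(\tfrac{29}{21}\Big)^{2n},\qquad \fint_{B_{21r/8}(y)}|\F/\mu|^2\,d\mu \leq M_0\Big(\tfrac{29}{21}\Big)^{2n}\delta_2.
\]
After the rescaling $u'=u/[M_0(29/21)^{2n}]$, $\F'=\F/[M_0(29/21)^{2n}]$, and $\lambda'=\lambda M_0(29/21)^{2n}\geq\lambda_2$, I apply Proposition \ref{interio-approx-lemma} on $B_{3r}(y)$ to obtain a Lipschitz function $v$ on $B_{3r/2}(y)$ satisfying
\[
\fint_{B_{3r/2}(y)}|\nabla u-\nabla v|^2\,d\mu \leq \gamma M_0\Big(\tfrac{29}{21}\Big)^{2n},\qquad \|\nabla v\|_{L^\infty(B_{3r/2}(y))}\leq C_*(\Lambda,M_0,n).
\]

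With $N_2:=\max\{4C_*,\,5^{2n}M_0\}$, the key set-inclusion argument is the same bifurcation on the radius $\rho$ used in Lemma \ref{density-est-interior-1}: for $x\in B_r(y)$ with $\M_{\mu,B_{3r/2}(y)}(|\nabla u-\nabla v|^2)(x)\leq C_*$ and $\rho\leq r/2$, the ball $B_\rho(x)\subset B_{3r/2}(y)\subset\Omega_{2R}(y_0)$ and splitting $|\nabla u|^2\leq 2|\nabla u-\nabla v|^2+2|\nabla v|^2$ yields the bound $4C_*\leq N_2$; for $\rho>r/2$, we have $B_\rho(x)\subset B_{5\rho}(x_0)$ and the doubling property together with $\M_{\mu,\Omega_{2R}(y_0)}(|\nabla u|^2)(x_0)\leq 1$ yields $5^{2n}M_0\leq N_2$. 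This establishes
\[
\{x\in B_r(y):\M_{\mu,\Omega_{2R}(y_0)}(|\nabla u|^2)>N_2\}\subset E:=\{x\in B_r(y):\M_{\mu,B_{3r/2}(y)}(|\nabla u-\nabla v|^2)>C_*\}.
\]

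Finally, the weak-type $(1,1)$ bound for the maximal operator $\M_\mu$, together with the doubling property and the $L^2$-approximation estimate, gives $\mu(E)/\mu(B_r(y))\leq C'(\Lambda,M_0,n)\gamma$. Passing from $\mu$-measure to $\omega$-measure via Lemma \ref{compare-omega-mu} produces an exponent $\beta=\beta(M_2,n)>0$ and the bound $\omega(E)/\omega(B_r(y))\leq C''(\Lambda,M_0,M_2,n)\gamma^{\beta/2}$. Choosing $\gamma$ so that $C''\gamma^{\beta/2}=\epsilon$ closes the argument. The main subtlety (and essentially the only non-routine point beyond mimicking Lemma \ref{density-est-interior-1}) is verifying that the inclusion $B_{3r}(y)\subset\Omega_{2R}(y_0)$ is genuinely enough to invoke the purely interior approximation Proposition \ref{interio-approx-lemma} rather than the boundary version—this is why the statement requires $B_{3r}(y)\subset\Omega_{2R}(y_0)$ and not merely $y\in\Omega_R(y_0)$; the boundary of $\Omega$ plays no role on $B_{3r}(y)$ and the BMO condition localizes cleanly to this ball.
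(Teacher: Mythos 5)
Your proposal is correct and follows exactly the argument the paper itself appeals to: the paper's proof of this lemma simply says to repeat the proof of Lemma~\ref{density-est-interior-1} with $B_R$ and $B_{2R}$ replaced by $\Omega_R(y_0)$ and $\Omega_{2R}(y_0)$, which is precisely what you carry out, correctly identifying that the hypothesis $B_{3r}(y)\subset\Omega_{2R}(y_0)$ reduces the relative BMO condition and the maximal operator $\M_{\mu,\Omega_{2R}(y_0)}$ to their purely interior counterparts on $B_{3r}(y)$, so that Proposition~\ref{interio-approx-lemma} applies without change.
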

\begin{proof} The same as that of Lemma \ref{density-est-interior-1}, with $B_R$ replaced by $\Omega_R(y_0)$, and $B_{2R}$ replaced by $\Omega_{2R}(y_0)$.
\end{proof}
Combining Lemma \ref{density-est-bd-1} and Lemma \ref{density-est-interior-2}, we can prove the following lemma
%=========
\begin{lemma} \label{contra-lemma} Let $\Lambda >0, q \geq 1, M_{0} \geq 1, M_2 \geq 1, M_1 >0$ and let $\omega_0: \K \times [0, \infty) \rightarrow [0, \infty)$ be continuous, satisfy \eqref{omega-limit} and $\norm{\omega_0}_{\infty} \leq M_1$ with some open interval $\K \subset \R$. Then, for every $\epsilon >0$, there are $N = N(\Lambda, M_0, n)$, $\delta = \delta (\epsilon, q, \Lambda, M_0, M_2, n) \in (0, 1/8)$, and $\lambda_0 = \lambda_0(\epsilon, \Lambda, M_0, M_1, M_2, \omega_0, q, n) \geq 1$  such that the following statement holds: Suppose $\Omega$ is $(\delta, R_0)$-Reifenberg flat in $\R^n$ for some $R_0>0$, and suppose that $\A \in \U_{\Omega, \K}(\Lambda, M_0, M_1, \omega_0)$ with its asymptotical matrix $\tA$ and weight $\mu$ satisfying
\begin{equation} \label{BMO-bd-est-prop}
\sup_{0 < \rho < R_0}\sup_{x\in \Omega} \frac{1}{\mu(B_\rho(x))} \int_{\Omega_\rho(x)} \Big| \tA(y) - \wei{\tA}_{\Omega_{\rho}(x)} \Big|^2 \mu^{-1} (y) dy \leq \delta.
\end{equation}
 If $y_0 \in \overline{\Omega}$, $R>0$,  $\lambda \geq \lambda_0$, and $u\in W^{1,2}(\Omega, \mu)$ is a weak solution of \eqref{lambda-eqn-all} so that for some  $y\in \Omega_{R}(y_0)$, $0 < r < \min\{R_0, R\}/50$,
\begin{equation}\label{bdry-cond-0}
\omega\Big(\Big\{x\in\Omega_{R}(y_0): \M_{\mu, \Omega_{2R}(y_0)}(|\nabla u|^{2}) (x) > N\Big \}\cap  B_{r}(y)\Big) \geq \epsilon \omega(B_{r}(y))
\end{equation}
for some $\omega \in A_q$ with $[\omega]_{A_q} \leq M_2$, then
\[
\begin{split}
\Omega_{r}(y) &\subset  \Big \{\Omega_{R}(y_0): \M_{\mu, \Omega_{2R}(y_0)}(|\nabla u|^{2}) >1 \Big \}\cup  \\
& \quad \cup \Big \{\Omega_{R}(y_0): \M_{\mu, \Omega_{2R}(y_0)}(|\F/\mu|^{2}) > \delta \Big \} \cup \Big\{\Omega_{R}(y_0): \M_{\mu, \Omega_{2R}(y_0)}(|\nabla g|^{2}) > \delta^{2} \Big\}.
\end{split}
\]
\end{lemma}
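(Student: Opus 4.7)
The plan is to argue by contraposition. Suppose the right-hand inclusion fails, so there is a point
\[
z \in \Omega_r(y) \ \text{with} \ \M_{\mu,\Omega_{2R}(y_0)}(|\nabla u|^2)(z) \leq 1, \ \M_{\mu,\Omega_{2R}(y_0)}(|\F/\mu|^2)(z) \leq \delta, \ \M_{\mu,\Omega_{2R}(y_0)}(|\nabla g|^2)(z) \leq \delta^2.
\]
The goal is to contradict \eqref{bdry-cond-0} by invoking either the interior density estimate (Lemma \ref{density-est-interior-2}) or its boundary analogue (Lemma \ref{density-est-bd-1}), the choice being dictated by how close $y$ is to $\partial \Omega$. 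I would set $N := \max\{N_1, N_2\}$, and choose $\delta$ and $\lambda_0$ so that the two density lemmas apply for the rescaled threshold $\epsilon' := \epsilon/K$, where $K = K(q, M_2, n)$ is a doubling constant for $\omega$ on the concentric balls encountered below.

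If $\textup{dist}(y, \partial\Omega) \geq 3r$ (the interior case), then $B_{3r}(y) \subset \Omega$, and the constraint $r < R/50$ together with $y \in \Omega_R(y_0)$ gives $B_{3r}(y) \subset \Omega_{2R}(y_0)$. The BMO hypothesis \eqref{BMO-bd-est-prop} descends to the corresponding hypothesis of Lemma \ref{density-est-interior-2}, and $z$ certifies the non-empty intersection \eqref{non-empty-inter-intersection-all}. Lemma \ref{density-est-interior-2} then delivers $\omega(\{\M_{\mu,\Omega_{2R}(y_0)}(|\nabla u|^2) > N_2\} \cap B_r(y)) < \epsilon' \omega(B_r(y))$, which contradicts \eqref{bdry-cond-0} since $N \geq N_2$ and $\epsilon' \leq \epsilon$.

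If $\textup{dist}(y, \partial\Omega) < 3r$ (the boundary case), I would pick $y_1 \in \partial\Omega$ with $|y - y_1| < 3r$. Since $21r < R_0/2$ by hypothesis, Remark \ref{Rei-flat-remark} applied at $y_1$ with $\rho = 21r$ produces a coordinate system with origin $O$ in the interior of $\Omega$, $|O - y_1| \leq 21r\delta$, in which
\[
B_{21r}^+(0) \subset \Omega_{21r}(0) \subset B_{21r}(0) \cap \{z_n > -84r\delta\}.
\]
Taking $\delta \leq \delta_1/7$ and setting $r_1 := 7r$, this matches the flat-boundary hypothesis of Lemma \ref{density-est-bd-1}. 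Since $|O - y| \leq 21r\delta + 3r < 6r$ (using $\delta < 1/8$), one has $B_r(y) \subset B_{r_1}(0)$ in the new coordinates; the point $z$ supplies the required non-empty intersection \eqref{non-empty-b-intersection-1} (observing that $\delta^2 \leq \delta \leq \delta_1$ handles the $|\nabla g|^2$ bound); and the BMO condition of Lemma \ref{density-est-bd-1} follows from \eqref{BMO-bd-est-prop} applied at $O$. Restricting the lemma's conclusion to the subset $B_r(y) \subset B_{r_1}(0)$ and then using doubling of $\omega$ to compare $\omega(B_{r_1}(0))$ with $\omega(B_r(y))$, one obtains
\[
\omega\Big(\Big\{\M_{\mu,\Omega_{2R}(y_0)}(|\nabla u|^2) > N_1\Big\} \cap B_r(y)\Big) \leq \omega\Big(\Big\{\M_{\mu,\Omega_{2R}(y_0)}(|\nabla u|^2) > N_1\Big\} \cap B_{r_1}(0)\Big) < \epsilon' \omega(B_{r_1}(0)) \leq \epsilon \omega(B_r(y)),
\]
again contradicting \eqref{bdry-cond-0}.

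The main obstacle is the boundary case: reconciling the tilted Reifenberg coordinates of Definition \ref{Reifenberg-flatness} with the exact flat-boundary form $B_\rho^+ \subset \Omega_\rho$ that Lemma \ref{density-est-bd-1} requires. Remark \ref{Rei-flat-remark} resolves this at the price of shifting the origin by $O(\delta r)$ into the interior of $\Omega$, and the precise factor $50$ in the hypothesis $r < \min\{R_0, R\}/50$ is what simultaneously accommodates this shift and the radius inflation $r \to r_1 = 7r$, while preserving $3r_1 < R_0/2$ and $r_1 < R/4$. The doubling constant $K = K(q, M_2, n)$ must be threaded through the parameter choices so that the dependence of the final $\delta$ and $\lambda_0$ stated in the lemma is correctly recovered.
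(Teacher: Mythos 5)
Your proposal is correct and follows essentially the same route as the paper's proof: the same contradiction argument, the same split into an interior case (handled by Lemma~\ref{density-est-interior-2}) and a boundary case (handled by applying Remark~\ref{Rei-flat-remark} to flatten the boundary and then invoking Lemma~\ref{density-est-bd-1}), and the same use of $\omega$-doubling to transfer the density estimate on the flattened ball back to $B_r(y)$. The only differences are immaterial bookkeeping choices of constants (e.g., you insure with $\delta \leq \delta_1/7$ where $\delta \leq \delta_1$ would do, and your doubling constant $13^{nq}$ versus the paper's $17^{qn}$).
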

\begin{proof} Let $N = \max\{N_1, N_2\}$,  $\epsilon' = \frac{\epsilon}{M_2 (17)^{qn}}$,  and 
\[
\begin{split}
& \delta = \min\{\delta_1(\epsilon', \Lambda, M_0, M_2, n), \delta_2(\epsilon, \Lambda, M_0, M_2, n), 1/8\}, \quad \text{and} \\
& \lambda_0  = \max\{\lambda_1(\epsilon', \Lambda, M_0, M_1, M_2, \omega_0, n), \lambda_2(\epsilon, \Lambda, M_0, M_1, M_2, \omega_0, n)\},
\end{split}
\]
where $N_1, \delta_1, \lambda_1$ are defined in Lemma \ref{density-est-bd-1}, and $N_2, \delta_2, \lambda_2$ are defined in Lemma \ref{density-est-interior-2}. We observe that since $\epsilon'$ depends on $q$, so do $\delta, \lambda_0$.

Since $y \in \Omega_{R}(y_0)$, and $0 < r < \min\{R_0, R\}/50$, we see that $B_{3r}(y) \subset B_{2R}(y_0)$. Therefore, if $B_{3r}(y) \cap \partial \Omega = \emptyset$, Lemma \ref{contra-lemma} follows directly by Lemma \ref{density-est-interior-2} and our choice of $N, \delta, \lambda_0$. It then remains to consider the case that $B_{3r}(y) \cap \partial \Omega \not=\emptyset$. In this case, %by the assumption, we see that
%We argue by contradiction. Assume that the proposition is false. Then, there are $\Lambda >0, M_0, M_2 \geq 1$,  $M_1 >0$ and an $\epsilon_0 >0$ such that corresponding to each sufficiently small $\delta \in (0,1/8)$ and sufficiently $\lambda >1$, we can find a $(\delta, R_0)$ Reifenberg flat domain $\Omega$ with some $R_0>0$, a coefficient $\A \in \U_{\Omega, \K}(\Lambda, M_0, M_1, \omega_0)$ with its asymptotical matrix $\tA$ and weight $\mu$ such that \eqref{BMO-bd-est-prop} holds, and we can find $\F \in L^2(\Omega, \mu^{-1})$, $g \in W^{1,2}(\Omega, \mu)$, a solution $u \in W^{1,2}(\Omega, \mu)$ of \eqref{lambda-eqn-all}, some $y_0 \in \overline{\Omega}$, $R>0$,  and some $r \in (0, \min\{R, R_0\}/50)$, such that there are $y \in \Omega_R(y_0), x_{0}\in \Omega_{r}(y)$ so that 
%%$B_{3r}(y) \cap \partial \Omega \not= \emptyset$, 
%\begin{equation} \label{bdr-cond-good-est}
%\omega\Big(\Big\{x\in\Omega_{R}(y_0): \M_{\mu, \Omega_{2R}(y_0)}(|\nabla u|^{2}) (x) > N\Big \}\cap  B_{r}(y)\Big) \geq \epsilon \omega(B_{r}(y)).
%\end{equation}
we complete the proof by a contradiction argument. Assume there is $x_{0}\in \Omega_{r}(y)$ so that 
%%$B_{3r}(y) \cap \partial \Omega \not= \emptyset$
\[
\mathcal{M}_{\mu, \Omega_{2R}(y_0)}(|\nabla u|^{2})(x_{0}) \leq 1,\quad \mathcal{M}_{\mu, \Omega_{2R}(y_0)}\Big(|\F/\mu|^{2}\Big)(x_{0}) \leq \delta, \quad \text{and} \quad \mathcal{M}_{\mu, \Omega_{2R}(y_0)}\Big(|\nabla g|^{2}\Big)(x_{0}) \leq \delta.
\]
%Now, let $\epsilon = \frac{\epsilon_0}{M_2 17^{qn}}$, and with this $\epsilon$, let $\delta, \lambda_0$ be as in the statement of the lemma. Corresponding to this $\delta$, and $\lambda \geq \lambda_0$, let $\Omega, R_0, R, r >0, y, y_0, x_0, \F, g$  and $u \in W^{1,2}_0(\Omega, \mu)$ be as in the above statement.  
Because $B_{3r}(y) \cap \partial \Omega \not= \emptyset$, we can find $\tilde{y}_{0}\in \partial \Omega\cap B_{3r}(y)$. We observe that 
\[
x_{0} \in \Omega_r(y) = B_{r}(y)\cap \Omega \subset  B_{4r}(\tilde{y}_{0})\cap\Omega = \Omega_{4r}(\tilde{y}_0).
\]
Let $\rho = 7r$, we observe that $3\rho < R_0/2$. Since $\Omega$ is $(\delta, R_0)$ Reifenberg flat domain, and by Remark \ref{Rei-flat-remark}, there exists an orthonormal coordinate system $\{\vec{z}_1, \vec{z}_2,\cdots, \vec{z}_n\}$ in which $0 \in \Omega$,
\[
\tilde{y}_{0} = -3 \rho \delta \vec{z}_{n}\in \partial \Omega, \quad y_0 =z, \quad y = \hat{z},\quad x_{0} = z_{0},
\]
and 
\[
B_{3\rho}^{+}(0)\subset \Omega_{3\rho} \subset B_{3\rho} \cap \{z_{n} > -12\rho\delta\}. 
\]
In this new coordinate, the assumption \eqref{bdry-cond-0} becomes
\begin{equation} \label{bdr-cond-good-est-new-coordiente}
\omega\Big(\Big\{\Omega_{R}(z): \M_{\mu, \Omega_{2R}(z)}(|\nabla u|^{2}) > N\Big \}\cap  B_{r}(\hat{z})\Big) \geq \epsilon \omega(B_{r}(\hat{z})).
\end{equation}
We also observe that in the new coordinate system $z_0 \in B_{\rho}(0)$.  Indeed, 
\[ |\hat{z}| < |\hat{z} - \tilde{y}_0| + |\tilde{y}_0| = |y - \tilde{y}_0| + |\tilde{y}_0|  \leq 3r + 3\rho \delta , \]
and therefore 
\[
|z_{0}| \leq |z_0 -\hat{z}| + |\hat{z}| \leq 4r + 3\rho \delta  \leq 4r + \frac{3}{8} \rho < \rho.
\]
Collecting all estimates, after a change of coordinate system, and after a simple calculation, we obtain the followings
\begin{itemize}
\item[(i)] $u\in W^{1, 2}(\Omega, \mu)$ is a weak solution to \eqref{lambda-eqn-all}, $0 \in \Omega$,
\item[(ii)] $B_{3\rho}^{+}(0)\subset \Omega_{3\rho}\subset B_{3\rho}(0) \cap \{z= (z', z_n): z_{n}> -12\rho \delta\}$, 
\item[(iii)] $z_{0}\in B_{\rho}(0)\cap \Big\{\Omega_R(z): \M_{\mu, \Omega_{2R}(z)}(|\nabla u|^{2}) \leq 1 \Big\}$ and\\
$z_0 \in \Big\{\Omega_R(z): \M_{\mu, \Omega_{2R}(z)}(|\F/\mu|^{2}) \leq \delta \Big\} \cap \Big\{\Omega_R(z): \M_{\mu, \Omega_{2R}(z)}(|\nabla g|^{2}) \leq \delta \Big\} , and $
\item[(iv)] $B_{r}(\hat{z}) \subset B_{\rho}(0)\subset B_{17r}(\hat{z})$.
\end{itemize}
From \textup{(i)}-\textup{(iii)}, and the choice of $\delta$, we see that all the hypotheses of Lemma \ref{density-est-bd-1} are satisfied. We thus conclude that 
\[
\omega\Big(B_{\rho}(0)\cap\Big \{\Omega_R(z): \mathcal{M}_{\mu, \Omega_{2R}(z)}(|\nabla u|^{2}) > N \Big\} \Big) <  \epsilon'  \omega(B_{\rho}(0)). 
\]
Moreover, from item $\textup{(iv)}$, we infer that 
\[
\begin{split}
\omega\Big(B_r(\hat{z}) \cap \Big\{\Omega_{R}(z): \M_{\mu, \Omega_{2R}(z)}(|\nabla u|^{2}) > N \Big\}  \Big) &\leq\omega\Big (B_{\rho}(0)\cap \Big \{\Omega_R(z): M_{\mu, \Omega_{2R}(z)} |\nabla u|^{2}) > N \Big\} \Big)\\
&<  \frac{\epsilon }{M_{2}17^{qn}}\omega(B_{\rho}(0)) \leq \frac{\epsilon }{M_{2}17^{qn}}\omega(B_{17r}(\hat{z})) \\
&\leq  \frac{\epsilon }{M_{2}17^{qn}}M_{2}17^{qn} \mu(B_{r}(\hat{z})) \\
&= \epsilon\mu(B_{r}(\hat{z})),
\end{split}
\]
where we have used the doubling property of the $\mu$ as in Lemma \ref{doubling}. The last estimate obviously contradicts \eqref{bdr-cond-good-est-new-coordiente}, and thus the proof is complete. 

\end{proof}
Finally, we can estimate the density of the level sets of $\M_{\mu, \Omega_{2R}(y_0)}(|\nabla u|^2)$, which is the main result of the subsection.
%==========
\begin{proposition} \label{decay-est-global} Let $A>0, q \geq 1, \Lambda >0, M_{0} \geq 1, M_2 \geq 1, M_1 >0$, and  let $\epsilon >0$ sufficiently small. Also, let  $\omega_0: \K \times [0, \infty) \rightarrow [0, \infty)$ be continuous, satisfy $\norm{\omega_0}_{\infty} \leq M_1$ with some open interval $\K \subset \R$, and then let $N = N(\Lambda, M_0, n) \geq 1$,  $\delta = \delta (\epsilon, q, \Lambda, M_0, M_2, n) \in (0,1/8)$, and $\lambda_0 = \lambda_0(\epsilon, \Lambda, M_0, M_1, M_2, \omega_0, q, n) \geq 1$ be as in Lemma \ref{contra-lemma}.  Suppose that $\Omega$ is $(\delta, R_0)$-Reifenberg flat in $\R^n$ for some $R_0>0$, and $\A \in \U_{\Omega, \K}(\Lambda, M_0, M_1, \omega_0)$ with its asymptotical matrix $\tA$ and weight $\mu$ satisfying
\[
\sup_{0 < \rho < R_0}\sup_{x\in \Omega} \frac{1}{\mu(B_\rho(x))} \int_{\Omega_\rho(x)} \Big| \tA(y) - \wei{\tA}_{\Omega_{\rho}(x)} \Big|^2 \mu^{-1} (y) dy \leq \delta.
\]
Then, for any $\lambda \geq \lambda_0$, if $u\in W^{1,2}(\Omega, \mu)$ is a weak solution of \eqref{lambda-eqn-all} such that for some  $R>0, y_0 \in \overline{\Omega}$, some fixed $0 < r_0 < \min\{R_0, R\}/50$, and some $\omega \in  A_q$ satisfying $[\omega]_{A_q} \leq M_2$,
\begin{equation}\label{bdry-cond-1}
\omega(\{\Omega_R(y_0): \M_{\mu, \Omega_{2R}(y_0)}(|\nabla u|^{2})  > N \}) \leq \epsilon \omega(B_{r_0}(y)), \quad \forall \ y \in \overline{\Omega}_{R}(y_0),
\end{equation}
and $\Omega_R(y_0)$ is of $(A, r_0)$ type, then with $\epsilon_1 = 5^{nq} \max\{A^{-1}, 4^n\}^{q} M_2^2\epsilon$,
\[
\begin{split}
& \omega\Big (\Big\{\Omega_R(y_0): \M_{\mu, \Omega_{2R}(y_0)}(|\nabla u|^2 > N \Big\} \Big)  \\
& \leq \epsilon_1\left[ \omega\Big (\Big\{\Omega_R(y_0): \M_{\mu, \Omega_{2R}(y_0)}(|\nabla u|^2 )> 1 \Big\} \Big) \right.\\
& \quad + \left. \omega\Big (\Big\{\Omega_R(y_0): \M_{\mu, \Omega_{2R}(y_0)}(|\nabla g|^2) > \delta  \Big\} \Big)+\omega\Big (\Big\{\Omega_R(y_0): \M_{\mu, \Omega_{2R}(y_0)}(|\F/\mu|^2) > \delta \Big\} \Big) \right].
\end{split}
\]
\end{proposition}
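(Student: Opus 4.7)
The plan is to apply the modified Vitali covering lemma (Lemma \ref{Vitali}) to compare the level set at height $N$ with the union of three auxiliary level sets at heights $1$ (for $|\nabla u|^2$), $\delta$ (for $|\F/\mu|^2$), and $\delta$ (for $|\nabla g|^2$). All the analytic content, namely the existence of good comparison functions via the boundary and interior approximation results, has already been packaged into Lemma \ref{contra-lemma}, so what remains is essentially a geometric covering argument.

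First I would define the two sets
\[
C = \Big\{ x \in \Omega_R(y_0) : \M_{\mu, \Omega_{2R}(y_0)}(|\nabla u|^2)(x) > N \Big\},
\]
\[
D = \Big\{\Omega_R(y_0): \M_{\mu, \Omega_{2R}(y_0)}(|\nabla u|^2) > 1 \Big\} \cup \Big\{\Omega_R(y_0): \M_{\mu, \Omega_{2R}(y_0)}(|\F/\mu|^2) > \delta \Big\} \cup \Big\{\Omega_R(y_0): \M_{\mu, \Omega_{2R}(y_0)}(|\nabla g|^2) > \delta \Big\},
\]
so that $C \subset D \subset \Omega_R(y_0)$. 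The first hypothesis of Lemma \ref{Vitali} is exactly our standing assumption \eqref{bdry-cond-1}. For the second hypothesis, suppose that for some $x \in \Omega_R(y_0)$ and $\rho \in (0, r_0)$ we have $\omega(C \cap B_\rho(x)) \geq \epsilon\, \omega(B_\rho(x))$; then Lemma \ref{contra-lemma}, applied with the parameters $N, \delta, \lambda_0$ already fixed in its statement, forces
\[
\Omega_\rho(x) \subset D,
\]
which verifies condition (ii). The only point to check is that the values of $N, \delta, \lambda_0$ obtained in Lemma \ref{contra-lemma} are compatible with the ones in the present proposition, which is guaranteed by the way the constants are chosen here.

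With both hypotheses of Lemma \ref{Vitali} verified, I would invoke that lemma (recalling that $\Omega_R(y_0)$ is of type $(A, r_0)$ and $\omega \in A_q$ with $[\omega]_{A_q} \leq M_2$) to conclude
\[
\omega(C) \leq \epsilon'\, \omega(D), \qquad \epsilon' = \epsilon\, 5^{nq} \max\{A^{-1}, 4^n\}^q M_2^2 = \epsilon_1.
\]
The proposition then follows immediately from the subadditivity of $\omega$ applied to the three pieces defining $D$.

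The main (really, the only) obstacle in this argument is not in the covering step itself but in ensuring that Lemma \ref{contra-lemma} is applicable with the same $(N, \delta, \lambda_0)$ for which \eqref{bdry-cond-1} is postulated; this is a matter of threading the parameters carefully, since $\delta$ and $\lambda_0$ depend on $\epsilon$ through Lemma \ref{contra-lemma}, which in turn chains back to the boundary and interior approximation propositions. Once this bookkeeping is done, the rest of the proof is a clean application of the Vitali covering plus subadditivity, with no further analytic input required.
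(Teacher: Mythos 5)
Your proposal is correct and follows essentially the same route as the paper's own proof: define $C$ and $D$ exactly as you do, note that hypothesis (i) of Lemma \ref{Vitali} is the assumption \eqref{bdry-cond-1} and that hypothesis (ii) is the contrapositive form of Lemma \ref{contra-lemma}, then invoke Lemma \ref{Vitali} and subadditivity of $\omega$ to finish. The parameter bookkeeping you flag is indeed the only non-mechanical point, and it is handled in the paper in the same way, by fixing $N,\delta,\lambda_0$ once through Lemma \ref{contra-lemma}.
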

\begin{proof} %Let $N = N(\Lambda, M_0, n), \delta = \delta(\epsilon, \Lambda, M_0, M_2, n)$ and $\lambda_0= \lambda_0(\epsilon, \Lambda, M_0, M_1, M_2, \omega_0, n)$ be as in Lemma \ref{contra-lemma}.
Let us also denote
\[
C= \Big\{\Omega_R(y_0): \M_{\mu, \Omega_{2R}(y_0)}(|\nabla u|^2) > N \Big\},
\]
and 
\[
\begin{split}
D &= \Big \{\Omega_R(y_0): \M_{\mu, \Omega_{2R}(y_0)}(|\nabla u|^2) > 1 \Big\} \cup \\
& \quad \quad \cup \{\Omega_R(y_0): \M_{\mu, \Omega_{2R}(y_0)}(|\nabla g|^2) > \delta  \Big\}  \cup \Big \{\Omega_R(y_0): \M_{\mu, \Omega_{2R}(y_0)}(|\F/\mu|^2) > \delta \Big\}.
\end{split}
\]
Clearly $C \subset D\subset \Omega_R(y_0)$. Moreover, by the assumption, we see that $\omega(C) < \epsilon \omega(B_{r_0}(y))$ for all $y \in \overline{\Omega}_R(y_0)$. Therefore, (i) of Lemma \ref{Vitali} holds. Moreover, it is simple to check that (ii) follows from Lemma \ref{contra-lemma}. Therefore, Lemma \ref{decay-est-global} is just a consequence of Lemma \ref{Vitali}.
\end{proof}
%==============
\subsection{Proof of the global weighted $W^{1,p}$-regularity estimates}
From the Proposition \ref{decay-est-global} and an iterating procedure, we obtain the following lemma
\begin{lemma} \label{global-iterating-lemma} Let $A, \Lambda >0, M_{0}, M_2, q \geq 1, M_1 >0$, and  let $\epsilon >0$ sufficiently small. Also, let  $\omega_0: \K \times [0, \infty) \rightarrow [0, \infty)$ be continuous, satisfy $\norm{\omega_0}_{\infty} \leq M_1$ with some open interval $\K \subset \R$, and let $N = N(\Lambda, M_0, n)$,  $\delta = \delta (\epsilon, q, \Lambda, M_0, M_2, n) \in (0,1/8)$, and $\lambda_0 = \lambda_0(\epsilon, \Lambda, M_0, M_1, M_2, \omega_0, q, n)$ be as in Proposition \ref{decay-est-global}.  Suppose that $\Omega$ is $(\delta, R_0)$-Reifenberg flat in $\R^n$ for some $R_0>0$, and $\A \in \U_{\Omega, \K}(\Lambda, M_0, M_1, \omega_0)$ with its asymptotical matrix $\tA$ and weight $\mu$ satisfying
\[
\sup_{0 < \rho < R_0}\sup_{x\in \Omega} \frac{1}{\mu(B_\rho(x))} \int_{\Omega_\rho(x)} \Big| \tA(y) - \wei{\tA}_{B_{\rho}(x)} \Big|^2 \mu^{-1} (y) dy \leq \delta.
\]
Then, for any $\lambda \geq \lambda_0$, $y \in \overline{\Omega}$ and $R>0$ such that $\Omega_R(y_0)$ is of type $(A, r_0)$ for some fixed $0 < r_0 < \min\{R_0, R\}/50$, if $u\in W^{1,2}(\Omega, \mu)$ is a weak solution of \eqref{lambda-eqn-all} such that  for some $\omega \in  A_q$ satisfying $[\omega]_{A_q} \leq M_2$,
\begin{equation}\label{bdry-cond-last}
\omega(\{x\in\mathbb{R}^{n}: \M_{\mu, \Omega_{2R}(y_0)}(|\nabla u|^{2})  > N \}) \leq \epsilon \omega(B_{r_0}(y)), \quad \forall \ y \in \overline{\Omega}_R(y_0),
\end{equation}
then with $\epsilon_1 = 5^{nq} \max\{A^{-1}, 4^n\}^{q} M_2^2\epsilon$, and for any $k \in \mathbb{N}$, 
\begin{equation} \label{iteration-formula}
\begin{split}
& \omega \Big  (\Big\{\Omega_R(y_0): \M_{\mu, \Omega_{2R}(y_0)}(|\nabla u|^2 > N^k \Big\} \Big)  \leq \epsilon_1^k \omega\Big (\Big\{\Omega_R(y_0): \M_{\mu, \Omega_{2R}(y_0)}(|\nabla u|^2 )> 1 \Big\} \Big) \\
& \quad \quad + \sum_{i=1}^k \epsilon_1^{i}\left[ \omega\Big (\Big\{\Omega_{R}(y_0): \M_{\mu, \Omega_{2R}(y_0)}(|\nabla g|^2) > \delta N^{k-i}  \Big\} \Big)+\omega\Big (\Big\{\Omega_{R}(y_0): \M_{\mu, \Omega_{2R}(y_0)}(|\F/\mu|^2) > \delta N^{k-i}\Big\} \Big) \right].
\end{split}
\end{equation}
\end{lemma}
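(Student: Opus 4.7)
The plan is to argue by induction on $k$, with the base case $k=1$ being exactly Proposition \ref{decay-est-global}. The inductive step will follow from Proposition \ref{decay-est-global} once we exploit the scaling observation in Remark \ref{remark-1}: multiplying the solution by a constant factor changes the parameter $\lambda$ but keeps the class of admissible equations intact.

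For the inductive step, assume \eqref{iteration-formula} holds at level $k \geq 1$. I would introduce the rescaling
\begin{equation*}
\tilde{u} = u/N^{k/2}, \qquad \tilde{g} = g/N^{k/2}, \qquad \tilde{\F} = \F/N^{k/2}, \qquad \tilde{\lambda} = \lambda N^{k/2}.
\end{equation*}
A direct calculation from the definition $\A_\lambda(x, z, \eta) = \A(x, \lambda z, \lambda \eta)/\lambda$ in Remark \ref{remark-1} shows that $\tilde u$ is a weak solution of $\textup{div}[\A_{\tilde\lambda}(x, \tilde{u}, \nabla \tilde{u})] = \textup{div}[\tilde\F]$ in $\Omega$ with boundary data $\tilde u = \tilde g$ on $\partial\Omega$. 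Since $N \geq 1$ and $\lambda \geq \lambda_0$, we have $\tilde\lambda \geq \lambda_0$, so the $\lambda$-threshold in Proposition \ref{decay-est-global} is preserved. The asymptotic matrix $\tA$, the weight $\mu$, the BMO smallness, the Reifenberg flatness of $\Omega$, and the $(A, r_0)$-type condition on $\Omega_R(y_0)$ depend only on $\A$ and $\Omega$, not on the solution, and are unaffected by the rescaling.

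Next I would verify the smallness hypothesis \eqref{bdry-cond-1} of Proposition \ref{decay-est-global} for $\tilde u$. Since $|\nabla\tilde u|^2 = |\nabla u|^2/N^k$, the set $\{\M_{\mu, \Omega_{2R}(y_0)}(|\nabla\tilde u|^2) > N\}$ coincides with $\{\M_{\mu, \Omega_{2R}(y_0)}(|\nabla u|^2) > N^{k+1}\}$, which is contained in $\{\M_{\mu, \Omega_{2R}(y_0)}(|\nabla u|^2) > N\}$ because $N \geq 1$ and $k \geq 1$. Thus \eqref{bdry-cond-last} supplies the needed bound. Applying Proposition \ref{decay-est-global} to $\tilde u$ and translating each level set back to the original variables then yields the one-step recurrence
\begin{equation*}
a_{k+1} \leq \epsilon_1 \bigl(a_k + b_k\bigr),
\end{equation*}
where $a_j = \omega(\{\Omega_R(y_0): \M_{\mu, \Omega_{2R}(y_0)}(|\nabla u|^2) > N^j\})$ and $b_j = \omega(\{\Omega_R(y_0): \M_{\mu, \Omega_{2R}(y_0)}(|\nabla g|^2) > \delta N^j\}) + \omega(\{\Omega_R(y_0): \M_{\mu, \Omega_{2R}(y_0)}(|\F/\mu|^2) > \delta N^j\})$.

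Combining this recurrence with the inductive hypothesis $a_k \leq \epsilon_1^k a_0 + \sum_{i=1}^{k} \epsilon_1^i b_{k-i}$ gives $a_{k+1} \leq \epsilon_1^{k+1} a_0 + \sum_{i=1}^{k+1} \epsilon_1^i b_{k+1-i}$ after reindexing, which is \eqref{iteration-formula} at level $k+1$. The only real point to watch is the bookkeeping of the scaling: one must confirm that rescaling absorbs cleanly into the single parameter $\lambda$ and leaves the asymptotic matrix $\tA$, the weight $\mu$, and all structural constants $(\Lambda, M_0, M_1, M_2, q, n, A)$ unchanged, so that Proposition \ref{decay-est-global} can be invoked with the \emph{same} $\delta$, $N$, and $\epsilon_1$ at every iteration step. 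This is precisely what the formulation in Remark \ref{remark-1} was designed to accommodate.
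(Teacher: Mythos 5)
Your proof is correct and follows essentially the same strategy as the paper: induction on $k$, with the inductive step driven by the $\lambda$-rescaling trick from Remark \ref{remark-1} so that Proposition \ref{decay-est-global} can be re-invoked with the same structural constants. The only cosmetic difference is that you rescale by $N^{k/2}$ and apply the base case to derive a one-step recurrence $a_{k+1}\leq\epsilon_1(a_k+b_k)$, whereas the paper rescales by $\sqrt{N}$ and feeds the rescaled solution into the inductive hypothesis at level $k_0$ before combining with the $k=1$ estimate; both bookkeepings unwind to the same formula \eqref{iteration-formula}.
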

\begin{proof} We use induction on $k$. If $k =1$, \eqref{iteration-formula}  holds as a result of Proposition \ref{decay-est-global}. Now, let us assume that Lemma \ref{global-iterating-lemma} holds for some $k \in \{1, 2,\cdots, k_0\}$ with some $k_0 \in \N$. Assume that $u$ is a weak solution of \eqref{lambda-eqn-all} with some $\lambda \geq \lambda_0$ so that \eqref{bdry-cond-last} holds. 
Now, let us define $u' = u/\sqrt{N}$, $\F' = \F/\sqrt{N}$, $g' = g/\sqrt{N}$, and $\lambda' = \lambda \sqrt{N} \geq \lambda \geq \lambda_0$. Then, we see that $u'$ is a weak solution of
\[
\left\{
\begin{array}{cccl}
\text{div}[\A_{\lambda'}(x,u', \nabla u')] & = & \text{div}[\F'], & \quad \text{in} \quad \Omega,\\
u' & =& g', & \quad \text{on} \quad\partial \Omega.
\end{array} \right.
\]
Moreover, $\forall \ y \in \overline{\Omega}_{R}(y_0)$
\[
\omega\Big(\Big\{\Omega_R(y_0): \M_{\mu, \Omega_{2R}(y_0)}(|\nabla u'|^2 ) > N  \Big\} \Big) = \omega\Big(\Big\{\Omega_R(y_0): \M_{\mu, \Omega_{2R}(y_0)}(|\nabla u|^2 ) > N^2  \Big\} \Big) \leq \epsilon \omega(B_{r_0}(y)).
\]
Then, by applying the the induction hypothesis on $u'$, we obtain
\[
\begin{split}
& \omega\Big (\Big\{\Omega_R(y_0):   \M_{\mu, \Omega_{2R}(y_0)}(|\nabla u'|^2 > N^{k_0} \Big\} \Big)  \\
& \leq \epsilon_1^{k_0} \omega\Big (\Big\{\Omega_R(y_0): \M_{\mu, \Omega_{2R}(y_0)}(|\nabla u'|^2 )> 1 \Big\} \Big) \\\
& + \sum_{i=1}^{k_0} \epsilon_1^{i}\left[ \omega\Big (\Big\{\Omega_R(y_0): \M_{\mu, \Omega_{2R}(y_0)}(|\nabla g'|^2) > \delta N^{k_0-i}  \Big\} \Big)+\omega\Big (\Big\{\Omega_R(y_0): \M_{\mu, \Omega_{2R}(y_0)}(|\F'/\mu|^2) > \delta N^{k_0-i}\Big\} \Big) \right].
\end{split}
\]
This implies
\[
\begin{split}
& \omega\Big (\Big\{\Omega_R(y_0):   \M_{\mu, \Omega_{2R}(y_0)}(|\nabla u|^2 > N^{k_0+1} \Big\} \Big) \\
& \leq \epsilon_1^{k_0} \omega\Big (\Big\{\Omega_R(y_0): \M_{\mu, \Omega_{2R}(y_0)}(|\nabla u|^2 )> N \Big\} \Big) \\\
& + \sum_{i=1}^{k_0} \epsilon_1^{i}\left[ \omega\Big (\Big\{\Omega_R(y_0): \M_{\mu, \Omega_{2R}(y_0)}(|\nabla g|^2) > \delta N^{k_0 +1 -i}  \Big\} \Big)+\omega\Big (\Big\{\Omega_R(y_0): \M_{\mu, \Omega_{2R}(y_0)}(|\F'/\mu|^2) > \delta N^{k_0 +1-i}\Big\} \Big) \right].
\end{split}
\]
On the other hand, we also have
\[
\begin{split}
& \omega\Big (\Big\{\Omega_R(y_0): \M_{\mu, \Omega_{2R}(y_0)}(|\nabla u|^2 > N \Big\} \Big) \\
& \leq \epsilon_1\left[ \omega\Big (\Big\{\Omega_R(y_0): \M_{\mu, \Omega_{2R}(y_0)}(|\nabla u|^2 )> 1 \Big\} \Big) \right.\\
& \quad + \left. \omega\Big (\Big\{\Omega_R(y_0): \M_{\mu, \Omega_{2R}(y_0)}(|\nabla g|^2) > \delta  \Big\} \Big)+\omega\Big (\Big\{\Omega_R(y_0): \M_{\mu, \Omega_{2R}(y_0)}(|\F/\mu|^2) > \delta \Big\} \Big) \right].
\end{split}
\]
Therefore, by combining the last two estimates, we see that
\[
\begin{split}
& \omega\Big (\Big\{\Omega_R(y_0):  \M_{\mu, \Omega_{2R}(y_0)}(|\nabla u|^2 > N^{k_0+1} \Big\} \Big) \\
& \leq \epsilon_1^{k_0+1} \omega\Big (\Big\{\Omega_R(y_0): \M_{\mu, \Omega_{2R}(y_0)}(|\nabla u|^2 )> N \Big\} \Big) \\\
& + \sum_{i=1}^{k_0+1} \epsilon_1^{i}\left[ \omega\Big (\Big\{\Omega_R(y_0): \M_{\mu, \Omega_{2R}(y_0)}(|\nabla g|^2) > \delta N^{k_0 +1 -i}  \Big\} \Big)+\omega\Big (\Big\{\Omega_R(y_0): \M_{\mu, \Omega_{2R}(y_0)}(|\F'/\mu|^2) > \delta N^{k_0 +1-i}\Big\} \Big) \right].
\end{split}
\]
Hence, we have proved Lemma \ref{global-iterating-lemma} for $k \leq k_0 +1$. Lemma \ref{global-iterating-lemma} then holds for all $k \in \N$ by induction, and the proof is complete.
\end{proof}
\noindent
{\bf Proof of Theorem \ref{main-theorem}}. Let $N = N(\Lambda, M_0, n)$ be defined as in Lemma \ref{global-iterating-lemma}. For $p >2$, we denote $s = p/2 >1$, and choose $\epsilon >0$ and sufficiently small and depending only on $\Lambda, A, M_0, n, p, q$ such that
\[
\epsilon_1 N^{s} = 1/2,
\]
where $\epsilon_1$ is defined in Lemma \ref{global-iterating-lemma}. With this $\epsilon$, we can now choose 
\[ 
\begin{split}
\delta = \delta(q, \Lambda, A, M_0, M_2,p, n), \quad \lambda_0 = \lambda_0(\Lambda, A, M_0, M_1, M_2, \omega_0, p, q, n)
 \end{split}
\]
as determined by Lemma \ref{global-iterating-lemma}.  For $\lambda \geq \lambda_0$, let us assume for a moment that $u $ is a weak solution of \eqref{lambda-eqn-all}. Let
\begin{equation} \label{E-def}
E =E(\lambda, N) = \Big\{\Omega_R(y_0): \M_{\mu, \Omega_{2R}(y_0)} (|\nabla u|^2) > N \Big\}.
\end{equation}
We can assume without loss of generality that $r_0 < \min\{R, R_0\}/50$. We first prove the estimate in Theorem \ref{main-theorem} with the extra condition that
\begin{equation} \label{extra}
\omega(E) \leq \epsilon \omega(B_{r_0}(y)), \quad \forall \ y \in \overline{\Omega}_R(y_0).
\end{equation}
%Without loss of generality, we assume that $0 \in \Omega$. We also choose $s >0$ depending only on $\Omega, r_0$ and $\mu$ such that
%\begin{equation} \label{choice-of-s}
%\frac{1}{10} \mu(B_s) \leq \mu(\Omega) \leq \mu(B_s),  \quad 
%\frac{1}{10} \omega(B_s) \leq \omega(\Omega) \leq \omega(B_s), \quad B_{r_0}(y) \subset B_s: = B_s(0), \quad \forall \ y \in \overline{\Omega}.
%\end{equation}
Let us now consider the sum
\[
S = \sum_{k=1}^\infty N^{sk}  \omega\Big (\Big\{ \Omega_R(y_0): \M_{\mu, \Omega_{2R}(y_0)}(|\nabla u|^2) > N^k \Big\} \Big).
\]
From \eqref{extra}, we can apply Lemma \ref{global-iterating-lemma} to obtain
\[
\begin{split}
S & \leq \sum_{k=1}^\infty N^{ks} \left[ \sum_{i=1}^k \epsilon_1^i \omega\Big( \Big\{ \Omega_R(y_0): \M_{\mu, \Omega_{2R}(y_0)}(|\F/\mu|^2) > \delta N^{k-i} \Big\}\Big) \right. \\
& \quad \quad \quad \quad + \left. \sum_{i=1}^k \epsilon_1^i   \omega\Big( \Big\{ \Omega_R(y_0): \M_{\mu, \Omega_{2R}(y_0)}(|\nabla g|^2) >\delta N^{k-i} \Big\}\Big)   \right] \\
& \quad + \sum_{k=1}^\infty \Big(N^{s} \epsilon_1\Big)^k \omega\Big( \Big\{ \Omega_R(y_0): \M_{\mu, \Omega_{2R}(y_0)}(|\nabla u|^2) >1 \Big\}\Big).
\end{split}
\]
By Fubini's theorem, the above estimate can be rewritten as
\begin{equation} \label{Fubini-express}
\begin{split}
S &\leq \sum_{j=1}^\infty (N^s \epsilon_1)^j \left[  \sum_{k=j}^\infty N^{s(k-j)} \omega\Big( \Big\{ \Omega_R(y_0): \M_{\mu, \Omega_{2R}(y_0)}(|\F/\mu|^2) > \delta N^{k-j} \Big\}\Big)  \right. \\
&\quad \quad \quad + \left. \sum_{k=j}^\infty N^{s(k-j)}   \omega\Big( \Big\{ \Omega_R(y_0): \M_{\mu, \Omega_{2R}}(|\nabla g|^2) >\delta N^{k-j} \Big\}\Big)  \right] \\
& \quad + \sum_{k=1}^\infty \Big(N^{s} \epsilon_1\Big)^k \omega\Big( \Big\{ \Omega_R(y_0): \M_{\mu, \Omega_{2R}(y_0)}(|\nabla u|^2) >1 \Big\}\Big).
\end{split}
\end{equation}
Observe that 
\[
\omega\Big( \Big\{ \Omega_R(y_0): \M_{\mu, \Omega_{2R}(y_0)}(|\nabla u|^2) >1 \Big\}\Big) \leq\omega\Big(\Omega_R(y_0)\Big).
\]
From this, the choice of $\epsilon$, and Lemma \ref{measuretheory-lp}, and \eqref{Fubini-express} it follows that
\[
\begin{split}
S \leq C \left [ \norm{\M_{\mu, \Omega_{2R}(y_0)} ( |\F/\mu|^2)}_{L^s(\Omega_R(y_0), \omega)}^s + \norm{\M_{\mu,\Omega_{2R}(y_0)}(|\nabla g|^2)}_{L^s(\Omega_R(y_0), \omega)}^s + \omega(\Omega_R(y_0)) \right].
\end{split}
\]
Applying the Lemma \ref{measuretheory-lp} again, we see that
\[
\norm{\M_{\mu, \Omega_{2R}(y_0)}(|\nabla u|^2)}_{L^s(\Omega_R(y_0), \omega)}^s \leq C\left [ \norm{\M_{\mu, \Omega_{2R}(y_0)} ( |\F/\mu|^2)}_{L^s(\Omega_{2R}(y_0), \omega)}^s + \norm{\M_{\mu, \Omega_{2R}(y_0)}(|\nabla g|^2)}_{L^s(\Omega_{2R}(y_0), \omega)}^s + \omega(\Omega_R(y_0))  \right].
\]
By the Lesbegue's differentiation theorem, we observe that
\[
|\nabla u(x)|^2 \leq \M_{\mu,\Omega_{2R}(y_0)}(|\nabla u|^2)(x), \quad \text{a.e} \ x\ \in \Omega_R(y_0).
\]
Hence,
\[
\norm{\nabla u}_{L^p(\Omega_R(y_0), \omega)}^p \leq C\left[ \norm{\M_{\mu, \Omega_{2R}(y_0)} (|\F/\mu|^2)}_{L^s(\Omega_R(y_0), \omega)}^s + \norm{\M_{\mu,\Omega_{2R}(y_0)}(|\nabla g|^2)}_{L^s(\Omega_R(y_0), \omega)}^s + \omega(\Omega_R(y_0))  \right].
\]
Since $(\mu, \omega)$ satisfies the $s$-Sawyer's condition, by Theorem \ref{Two-weighted-maximal-ineq}, it follows
\begin{equation} \label{ineq-extra-cond}
\norm{\nabla u}_{L^p(\Omega_R(y_0), \omega)} \leq C\left[\norm{\F/\mu}_{L^p(\Omega_{2R}(y_0), \omega)} + \norm{\nabla g}_{L^p(\Omega_{2R}(y_0), \omega)} + \omega(\Omega_R(y_0))^{1/p} \ \right].
\end{equation}
Thus, we have proved \eqref{ineq-extra-cond} as long as $u$ is a weak solution of \eqref{lambda-eqn-all} for all $\lambda \geq \lambda_0$ and \eqref{extra} holds. \\

Next, we consider the case $0 < \lambda < \lambda_0$. Assume that  $u$ is a weak solution of \eqref{lambda-eqn-all} and \eqref{extra} holds.  Let us denote $u' = u/(\lambda_0/\lambda), \F' = \F/(\lambda_0/\lambda), g' = g/(\lambda_0/\lambda)$. Then, $u'$ is a weak solution of
\[
\left\{
\begin{array}{cccl}
\text{div}[\A_{\lambda_0}(x, u', \nabla u')] & = & \text{div}[\F'], & \quad \text{in} \quad \Omega, \\
u' & = & g', & \quad \text{on} \quad \Omega.
\end{array}
\right.
\]
Moreover, because of \eqref{extra} and $\lambda_0/\lambda \geq 1$, we also have
\[
\omega\Big(\Omega_R(y_0): \Big\{ \M_{\mu, \Omega_{2R}(y_0)} (|\nabla u'|^2) >N \Big\}\Big) \leq \epsilon \omega(B_{r_0}(y)), \quad \forall \ y \in \overline{\Omega}_R(y_0).
\]
Therefore, applying the conclusion of \eqref{ineq-extra-cond} for $u'$, we also obtain
\[
\norm{\nabla u'}_{L^p(\Omega_R(y_0), \omega)} \leq C\left[\norm{\F'/\mu}_{L^p(\Omega_{2R}(y_0), \omega)} + \norm{\nabla g'}_{L^p(\Omega_{2R}(y_0), \omega)} + \omega(\Omega_R(y_0))^{1/p}  \right].
\]
Thus, 
\begin{equation*} 
\norm{\nabla u}_{L^p(\Omega_R(y_0), \omega)} \leq C\left[\norm{\F/\mu}_{L^p(\Omega_{2R}(y_0), \omega)} + \norm{\nabla g}_{L^p(\Omega_{2R}(y_0), \omega)} + \lambda_0\omega(\Omega_R(y_0))^{1/p}/\lambda   \right].
\end{equation*}
In summary, up to now, we have proved that if $u$ is a weak solution of \eqref{lambda-eqn-all} with $\lambda>0$ and if \eqref{extra} holds, then 
\begin{equation} \label{ineq-extra-cond-2}
\norm{\nabla u}_{L^p(\Omega_R(y_0), \omega)} \leq C\left[\norm{\F/\mu}_{L^p(\Omega_{2R}(y_0), \omega)} + \norm{\nabla g}_{L^p(\Omega_{2R}(y_0), \omega)} + \omega(\Omega_R(y_0))^{1/p}\max\{\lambda^{-1}, 1\}   \right].
\end{equation}
%==========
We now remove the extra condition \eqref{extra}. Assuming now that $u$ is a weak solution of \eqref{main-eqn}, i.e. $\lambda =1$. %By testing \eqref{main-eqn} with $u -g$, we obtain
%\[
%\int_{\Omega}\wei{\A(x, u, \nabla u), \nabla u} dx= \int_{\Omega} \left (\F \cdot \nabla u  - \F\cdot \nabla g + \wei{\A(x,u,\nabla u), \nabla g} \right) dx.
%\]
%Using \eqref{Caratho-1}-\eqref{lower-ellip}, H\"{o}lder's inequality, and Young's inequality, we obtain the following energy estimate
%\begin{equation} \label{energy}
%K_0: = \int_{\Omega} |\nabla u|^2\mu(x) dx \leq C(\Lambda) \left [ \int_{\Omega} \Big| \frac{\F}{\mu} \Big|^2  \mu(x) dx  +  \int_{\Omega} |\nabla g|^2 \mu(x) dx \right] < \infty.
%\end{equation}
Let $M >1$ sufficiently large and will be determined. Let $u_M = u/M, \F_M = \F/M$ and $g_M = g/M$. We note that $u_M$ is a weak solution of
\begin{equation} \label{u-M-eqn}
\left\{
\begin{array}{cccl}
\text{div}[\A_M(x, u_M, \nabla u_M)] & = &\text{div}[\F_M], & \quad \text{in} \quad \Omega, \\
u_M & = & g_M, & \quad \text{on} \quad \partial \Omega.
\end{array}
\right.
\end{equation}
Let us denote
\[
E_M = \Big\{\Omega_R(y_0): \M_{\mu, \Omega_{2R}(y_0)}(|\nabla u_M|^2) > N \Big\}.
\]
and
\begin{equation} \label{K-zero}
K_0 = \left(\frac{1}{\mu(B_{2R}(y_0))} \int_{\Omega_{2R}(y_0)} |\nabla u|^2 \mu(x) dx \right)^{1/2} +1 \geq 1.
\end{equation}
We claim that we can choose $M  = C K_0$ with some sufficiently large constant $C$ depending only on $\Lambda, M_0, M_1$, $M_2, p, q, n$ and $R/r_0$ such that
\begin{equation} \label{M-density}
\omega(E_M) \leq \epsilon \omega(B_{r_0}(y)), \quad \forall \ y \in \overline{\Omega}_R(y_0).
\end{equation}
If this holds, we can apply \eqref{ineq-extra-cond-2} for $u_M$ which is a weak solution of \eqref{u-M-eqn} to obtain
\[
\norm{\nabla u_M}_{L^p(\Omega_{R}(y_0), \omega)} \leq C \left[ \norm{\F_M/\mu}_{L^p(\Omega_{2R}(y_0), \omega)}  + \norm{g_M}_{L^p(\Omega_{2R}(y_0), \omega)} +  \omega(\Omega_{R}(y_0))^{1/p} \right].
\]
Then, by multiplying this equality with $M$, we obtain
\[
\norm{\nabla u}_{L^p(\Omega_R(y_0), \omega)} \leq C \left[ \norm{\F/\mu}_{L^p(\Omega_{2R}(y_0), \omega)}  + \norm{g}_{L^p(\Omega_{2R}(y_0), \omega)} + \omega(\Omega_{R}(y_0))^{1/p} K_0\right].
\]
The proof of Theorem \ref{main-theorem} is therefore complete if we can prove \eqref{M-density}.  To this end, using the doubling property of $\omega \in A_q$, Lemma \ref{doubling}, we have
\[
\begin{split}
\frac{\omega(E_M)}{\omega(B_{r_0}(y))} & = \frac{\omega(E_M)}{\omega(B_{2R}(y_0))} \frac{\omega(B_{2R}(y_0))}{\omega(B_{r_0}(y))}  \leq M_2 \frac{\omega(E_M)}{\omega(B_{2R}(y_0))}  \left(\frac{2R}{r_0} \right)^{nq}.
\end{split}
\]
From this, and using Lemma \ref{doubling} again, we can find $\beta = \beta(M_2,n)>0$ such that
\begin{equation} \label{omega-mu-comparision}
\frac{\omega(E_M)}{\omega(B_{r_0}(y))} \leq C(M_0, M_2, n)  \left(\frac{2R}{r_0} \right)^{nq} \left(\frac{\mu(E_M)}{\mu(B_{2R}(y_0))}\right)^{\beta/2}.
\end{equation}
Now, by the definition of $E_M$, and the weak type (1-1) estimate for maximal function, we see that
\[
\begin{split}
\frac{\mu(E_M)}{\mu(B_{2R}(y_0))} & = \mu\Big(\Big\{\Omega_{R}(y_0): \M_{\mu, \Omega_{2R}(y_0)}(|\nabla u|^2) > NM^2 \Big\} \Big) \\
& \leq \frac{C(M_0, n)}{N M^2}  \frac{1}{\mu(B_{2R}(y_0))}\int_{\Omega_{2R}(y_0)} |\nabla u|^2 \mu(x) dx \\
& \leq \frac{C(M_0, n) K_0^2}{NM^2},
\end{split}
\]
where  $K_0$ is defined in \eqref{K-zero}. 
From this, and \eqref{omega-mu-comparision}, it follows that
\[
\frac{\omega(E_M)}{\omega(B_{r_0}(y))} \leq  C^*(\Lambda, M_0, M_2, n)  \left(\frac{2R}{r_0} \right)^{n q} \left(\frac{ K_0}{M}\right)^\beta
\]
Now, we choose $M$ such that
\[
M = K_0\left[\epsilon^{-1} C^*(\Lambda, M_0, M_2, n)  \left(\frac{2R}{r_0} \right)^{nq}\right]^{1/\beta}
\]
%Then, since $\mu(B_s)  \leq M_0 (s/r_0)^{2n} \mu(B_{r_0} (y))$, it follows that 
%\[
%\mu(E_M) \leq \epsilon^{2/\beta} \left[  C^*(M_0, M_2,n)  \left( \frac{s}{r_0}\right)^p \left(\frac{r_0}{s} \right)^{\nu \beta/2}\right]^{-2/\beta} \mu(B_{r_0}(y)), \quad \forall \ y \in \overline{\Omega}.
%\]
then, it follows
\[
\omega(E_M) \leq \epsilon \omega(B_{r_0}(y)), \quad \forall \ y \in \overline{\Omega}_R(y_0).
\]
This proves \eqref{M-density} and completes the proof.

\appendix  \label{Appendix}
\section{Proof of Lemma \ref{Vitali}}
\begin{proof}
We follow the method used in \cite{BW2, MP-1} with some modifications fitting to our setting. For each $x \in C$, let us define
\[
\phi(\rho) = \frac{\omega(C\cap B_\rho(x))}{\omega(B_\rho(x))}.
\]
By the Lebesgue Differentiation Theorem, for almost every $x \in C$, $\phi$ is continuous and $\phi(0) = \displaystyle{\lim_{\rho\rightarrow 0^+} \phi(\rho)} = 1$. Moreover, by (i), $\phi(r_0) < \epsilon$. Therefore, for almost every $x \in C$, there is $0 < \rho_x < r_0$ such that
\begin{equation} \label{cover-balls}
\begin{split}
\omega(C \cap B_{\rho_x}(x)) & = \epsilon \omega (B_{\rho_x}(x)), \quad \text{and} \\
\omega(C\cap B_\rho(x)) & < \epsilon \omega (B_\rho(x)), \quad \rho > \rho_x.
\end{split}
\end{equation}
Now, observe that the family of balls $\{B_{\rho_x}(x)\}_{x \in C}$ covers $C$. Therefore, by Vitali's Covering Lemma, there exists a countable $\{x_k\}_{k \in \N}$ in $C$ such that the balss $\{B_{\rho_{k}}(x_k)\}_{k \in \N}$ with $\rho_k = \rho_{x_k}$ are disjoint and
\[
C \subset \displaystyle{\cup_{k=1}^\infty B_{5\rho_k}(x_k)}.
\]
From this, \eqref{cover-balls} and Lemma \ref{doubling}, we infer that
\[
\begin{split}
\omega(C) & \leq \omega\Big (C\cap \big(\cup_{k=1}^\infty B_{5\rho_k}(x_k) \big) \Big ) \leq \sum_{k=1}^N \omega\Big (C\cap B_{5\rho_k}(x_k) \Big )  \\
& < \epsilon \sum_{k=1}^N\omega (B_{5\rho_k}(x_k)) \leq \epsilon M 5^{np}\sum_{k=1}^N \omega(B_{\rho_k}(x_k)).
% = \epsilon M^{np} \omega \Big(\cup_{k=1}^\infty B_{\rho_k}(x_k)\Big).
\end{split}
\]
Observe that by (i), and since $B_{\rho_k}(x_k)$ are all disjoint,
 \[ \Big( \cup_{k=1}^n B_{\rho_k}(x_k)  \Big) \cap \Omega_R(y_0) \subset D. \]
We claim that  
\begin{equation} \label{A-type}
\omega(B_{\rho}(x)) \leq \max M\Big\{A^{-1}, 4^n \Big\}^q \omega(B_{\rho}(x) \cap \Omega_R(y_0)), \quad \forall  x \in \Omega_R(y_0), \ \rho \in (0, r_0).
\end{equation}
From this claim, it follows that
\[
\omega(C) \leq \epsilon' \sum_{k=1}^n \omega\Big(B_{\rho_k}(x_k) \cap \Omega_R(y_0)\Big) = \epsilon' \omega \Big( \Big( \cup_{k=1}^n B_{\rho_k}(x_k)  \Big) \cap \Omega_R(y_0)\Big) \leq \epsilon' \omega(D).
\]
It now remains to prove \eqref{A-type}. It follows from Lemma \ref{doubling} that
\[
\frac{\omega(B_\rho(x))}{\omega(B_\rho(x) \cap \Omega_R(y_0))} \leq M \left(\frac{|B_\rho(x)|}{|B_\rho(x) \cap \Omega_R(y_0)|} \right)^q.
\] 
Hence, it suffices to prove
\begin{equation} \label{ball-intersection-eqn}
\sup_{x \in \Omega_R(y_0)}\sup_{0 <\rho < r_0} \frac{|B_\rho(x)|}{|B_\rho(x) \cap \Omega_R(y_0)|} \leq \Big\{ A^{-1}, 4^n \Big\}.
\end{equation}
We fix $x \in \Omega_R(y_0)$ and $\rho \in (0,r_0)$. Observe that if $B_{\rho}(x) \subset \Omega_{R}(y_0)$. Then \eqref{ball-intersection-eqn} is trivial.  Hence, we only need to consider the case $B_{\rho}(x) \cap \partial \Omega_{R}(y_0) \not= \emptyset$. We divide this situation into three subcases.\\
{\bf Case I:} If $B_{\rho}(x) \cap \Big( \partial B_R(y_0) \cap \overline{\Omega} \Big) \not=\emptyset$ and $B_{\rho}(x) \cap \partial \Omega \cap \overline{B}_R(y_0) = \emptyset$. Then, it follows that $\Omega_R(y_0) \cap B_{\rho}(x) = B_R(y_0) \cap B_{\rho}(x)$. From this, a simple calculation shows
\[
\frac{|B_\rho(x)|}{|B_\rho(x) \cap \Omega_R(y_0)|} = \frac{|B_\rho(x)|}{|B_\rho(x) \cap B_R(y_0)|} \leq 4^n.
\]
\\
\noindent
{\bf Case II:} If $B_{\rho}(x) \cap \partial \Omega   \cap \overline{B}_R(y_0) \not= \emptyset$ and $B_{\rho}(x) \cap  \partial B_R(y_0) \cap \overline{\Omega} =\emptyset$. In this case, by the proof of \cite[Lemma 3.8]{MP-1}, it follows that
\[
\frac{|B_\rho(x)|}{|B_\rho(x) \cap \Omega_R(y_0)|} \leq \left(\frac{2}{1-4\delta} \right)^n \leq 4^n.
\]
%Therefore, \eqref{A-type} follows. \\
\noindent
{\bf Case III:} If $B_{\rho}(x) \cap \partial \Omega \cap \overline{B}_R(y_0) \not= \emptyset$ and $B_{\rho}(x) \cap \partial B_R(y_0) \cap \overline{\Omega}  \not=\emptyset$. Then, by the definition of 
type $(A, r_0)$ domain, we see that
\[
|\Omega_R(y_0) \cap B_\rho(x)| \geq A|B_\rho(x)|.
\]
Therefore,
\[
\frac{|B_\rho(x)|}{|\Omega_R(y_0) \cap B_\rho(x)|} \leq A^{-1}.
\]
This completes the proof.
\end{proof}
 \ \\ \noindent
\textbf{Acknowledgement.}  T. Phan's research is supported by the Simons Foundation, grant \#~354889.

\end{document}